\newcommand{\N}{\mathbb{N}}
\newcommand{\Z}{\mathbb{Z}}
\newcommand{\R}{\mathbb{R}}
\newcommand{\C}{\mathbb{C}}
\newcommand{\dps}{\displaystyle}
\newcommand{\mm}{\mathcal{M}}
\newcommand{\gs}{\mathfrak{S}}
\newcommand{\Lb}{\mathcal{L}}
\newcommand{\fl}{\mathcal{F}\!\ell}
\newcommand{\hooklongrightarrow}{\lhook\joinrel\longrightarrow}
\newcommand{\circled}[1]{\raisebox{.3pt}{\textcircled{\raisebox{-1pt} {#1}}}}
\DeclareMathOperator{\im}{Im}
\DeclareMathOperator{\vect}{Vect}
\DeclareMathOperator{\gl}{GL}
\DeclareMathOperator{\h}{H}
\newenvironment{changemargin}[2]{%
\begin{list}{}{%
\setlength{\topsep}{0pt}%
\setlength{\leftmargin}{#1}%
\setlength{\rightmargin}{#2}%
\setlength{\listparindent}{\parindent}%
\setlength{\itemindent}{\parindent}%
\setlength{\parsep}{\parskip}%
}%
\item[]}{\end{list}}
\newtheorem{theo}{Theorem}[section]
\newtheorem{prop}[theo]{Proposition}
\newtheorem{lemma}[theo]{Lemma}
\newtheorem{cor}[theo]{Corollary}
\theoremstyle{definition}
\newtheorem{de}[theo]{Definition}
\newtheorem{rmk}[theo]{Remark}
\title{Production of faces of the Kronecker cone containing only stable triples}
\author{Maxime Pelletier\thanks{Univ Lyon, Université Claude Bernard Lyon 1, CNRS UMR 5208, Institut Camille Jordan, 43 blvd. du 11 novembre 1918, F-69622 Villeurbanne cedex, France (\texttt{pelletier@math.univ-lyon1.fr})}}
\date{\today}
\begin{document}
\maketitle

\begin{abstract}
One way to study the Kronecker coefficients is to focus on the Kronecker cone, which is generated by the triples of partitions corresponding to non-zero Kronecker coefficients. In this article we are interested in producing particular faces of this cone, formed of stable triples (a notion defined by J. Stembridge in 2014), using many geometric notions -- principally those of dominant and well-covering pairs -- and results of N. Ressayre. This extends a result obtained independently by L. Manivel and E. Vallejo in 2014 or 2015, expressed in terms of additive matrix. To illustrate the fact that it allows to produce quite a few new faces of the Kronecker cone, we give at the end of the article details about what our results yield for ``small dimensions''.
\end{abstract}

\section{Introduction}

The Kronecker coefficients are defined as the multiplicities arising in the decomposition of the tensor product of irreducible representations of symmetric groups. More precisely, if $k$ is a positive integer, denote by $\gs_k$ the symmetric group of permutations of $\llbracket 1,k\rrbracket$. Then the irreducible complex representations of $\gs_k$ are indexed by the partitions of the integer $k$ (i.e. the non-increasing finite sequences of positive integers -- that we will call parts -- whose sum is $k$) and, for any such partition $\alpha$, we denote by $M_\alpha$ the corresponding irreducible complex $\gs_k$-module. Then, if $\alpha$ and $\beta$ are partitions of $k$,
\[ M_\alpha\otimes M_\beta=\bigoplus_{\gamma\vdash k}M_\gamma^{\oplus g_{\alpha,\beta,\gamma}}, \]
and the coefficients $g_{\alpha,\beta,\gamma}$ are the Kronecker coefficients. For simplicity of notations we can extend the definition of these coefficients to triples of partitions of different integers by setting $g_{\alpha,\beta,\gamma}=0$ in that case. One axis of research concerning these Kronecker coefficients is to be interested in studying whether their value is 0 or not. As a consequence we fix two positive integers $n_1$ and $n_2$ and, denoting for any partition $\alpha$ its length -- i.e. the number of its parts -- by $\ell(\alpha)$, we are interested in the following set:

\begin{de}
The set
\[ \mathrm{Kron}_{n_1,n_2}=\{(\alpha,\beta,\gamma)\text{ s.t. }\ell(\alpha)\leq n_1, \, \ell(\beta)\leq n_2, \, \ell(\gamma)\leq n_1n_2\text{ and }g_{\alpha,\beta,\gamma}\neq 0\} \]
is called the Kronecker semigroup.
\end{de}

\begin{rmk}
In the previous definition, it is not a restriction to bound the length of the third partition ($\gamma$, in our notations) by the product of the bounds of the first two. Indeed there is a well-known result concerning the Kronecker coefficients: if $g_{\alpha,\beta,\gamma}\neq 0$, then $\ell(\gamma)\leq\ell(\alpha)\ell(\beta)$. On the same topic, another usual property of the Kronecker coefficients that we may use in this article is that the value of $g_{\alpha,\beta,\gamma}$ does not depend on the order of the indexing partitions $\alpha$, $\beta$, and $\gamma$.
\end{rmk}

The classical result concerning $\mathrm{Kron}_{n_1,n_2}$ is then that it is a finitely generated semigroup. Thus we consider the cone generated by this semigroup:

\begin{de}
The set
\vspace{-5mm}
\begin{changemargin}{-6mm}{-6mm}
\[ \mathrm{PKron}_{n_1,n_2}=\{(\alpha,\beta,\gamma)\text{ s.t. }\ell(\alpha)\leq n_1, \, \ell(\beta)\leq n_2, \, \ell(\gamma)\leq n_1n_2\text{ and }\exists N\in\N^*,\; g_{N\alpha,N\beta,N\gamma}\neq 0\} \]
\end{changemargin}
is called the Kronecker cone, or the Kronecker polyhedron. It is a rational polyhedral cone.
\end{de}

The Kronecker coefficients are moreover known to possess an interesting stability property, discovered by F. Murnaghan in the 1930's: given three partitions $\lambda,\mu,\nu$, the sequence $\big(g_{\lambda+(d),\mu+(d),\nu+(d)}\big)_{d\in\N}$ is known to be constant when $d\gg 0$. In order to generalise this property, J. Stembridge defined in \cite{stembridge} the notion of a stable triple of partitions:

\begin{de}
A triple $(\alpha,\beta,\gamma)$ of partitions is said to be stable if $g_{\alpha,\beta,\gamma}\neq 0$ and, for any triple $(\lambda,\mu,\nu)$ of partitions, the sequence of general term $g_{\lambda+d\alpha,\mu+d\beta,\nu+d\gamma}$ is constant when $d\gg 0$.
\end{de}

With this terminology, Murnaghan's stability just states that the triple $\big((1),(1),(1)\big)$ is stable. An interesting characterisation of this stability notion has been proven by the work of Stembridge (in \cite{stembridge}) and S. Sam and A. Snowden (in \cite{sam-snowden})\footnote{Other proofs of this by different methods exist in \cite{paradan} and \cite{article1}.}:

\begin{prop}
A triple $(\alpha,\beta,\gamma)$ of partitions is stable if and only if, for any positive integer $d$, $g_{d\alpha,d\beta,d\gamma}=1$.
\end{prop}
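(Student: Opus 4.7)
My plan is to pass from the combinatorial definition of Kronecker coefficients to an algebro-geometric setting where finite-generation arguments can be run. By Schur--Weyl duality, for integers $a\geq\ell(\alpha)$ and $b\geq\ell(\beta)$, one has $g_{\lambda,\mu,\nu}=\dim(V_\lambda^{\gl_a}\otimes V_\mu^{\gl_b}\otimes(V_\nu^{\gl_{ab}})^*)^{\gl_a\times\gl_b}$, with $\gl_a\times\gl_b$ acting on $V_\nu^{\gl_{ab}}$ via the tensor-product embedding $\gl_a\times\gl_b\hookrightarrow\gl_{ab}$. These invariant spaces assemble into a multigraded algebra $A=\bigoplus_{(\lambda,\mu,\nu)}A_{(\lambda,\mu,\nu)}$, and I will rely on the fact that $A$ is a finitely generated integral domain over $\C$ (being a ring of invariants of a reductive group on an affine irreducible variety), with $\dim A_{(\lambda,\mu,\nu)}=g_{\lambda,\mu,\nu}$.

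For the direction $(\Rightarrow)$, I would apply the stability hypothesis at $(\lambda,\mu,\nu)=(0,0,0)$ to deduce that $d\mapsto g_{d\alpha,d\beta,d\gamma}$ is eventually constant, equal to some integer $c\geq 1$. The $\N$-graded subring $R=\bigoplus_{d\geq 0}A_{(d\alpha,d\beta,d\gamma)}$ is then a finitely generated graded domain over $\C$ with $R_0=\C$ and Hilbert function eventually equal to $c$, hence of Krull dimension $1$. Its normalisation must therefore be isomorphic to $\C[T]$ with $T$ in some positive degree $e$, and the inclusion $R\hookrightarrow\C[T]$ forces $\dim R_d\leq 1$ for every $d$. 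Combining this with the monotonicity that follows from the injectivity of multiplication by any nonzero element of $A_{(\alpha,\beta,\gamma)}$ in the domain $A$, and with the assumption $g_{\alpha,\beta,\gamma}\geq 1$, I would conclude $g_{d\alpha,d\beta,d\gamma}=1$ for all $d\geq 1$.

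For the converse $(\Leftarrow)$, I would pick a nonzero $T\in A_{(\alpha,\beta,\gamma)}$ (unique up to scalar by hypothesis). Multiplication by $T$ yields, for each triple $(\lambda,\mu,\nu)$, a chain of injections $A_{(\lambda+d\alpha,\mu+d\beta,\nu+d\gamma)}\hookrightarrow A_{(\lambda+(d+1)\alpha,\mu+(d+1)\beta,\nu+(d+1)\gamma)}$, so $d\mapsto g_{\lambda+d\alpha,\mu+d\beta,\nu+d\gamma}$ is weakly increasing. Its colimit is a graded piece of the localisation $A[T^{-1}]$, which remains finitely generated over $\C$ and therefore has finite-dimensional graded components. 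The integer sequence is thus bounded, hence eventually constant, giving the stability of $(\alpha,\beta,\gamma)$.

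The main obstacle I anticipate is the algebro-geometric realisation itself---producing $A$ and verifying that it is a finitely generated integral domain with the correct multigraded dimensions. Once this setup is granted, the commutative-algebra arguments (the Krull-dimension analysis of $R$ in the first direction and the localisation/colimit argument in the second) are short and essentially formal.
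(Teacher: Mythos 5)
The paper does not prove this proposition itself; it cites Stembridge and Sam--Snowden. Your proposal reconstructs, in outline, the Sam--Snowden route through a finitely generated multigraded invariant ring $A$, and the forward direction $(\Rightarrow)$ is sound: applying stability at $(\lambda,\mu,\nu)=(0,0,0)$ gives an eventually constant Hilbert function for the $\N$-graded domain $R=\bigoplus_{d\ge 0}A_{(d\alpha,d\beta,d\gamma)}$, hence $\dim R=1$; the (graded) normalisation of such a domain is a one-variable graded polynomial ring, forcing $\dim R_d\le 1$; and multiplication by a nonzero element of $R_1$ together with $g_{\alpha,\beta,\gamma}\ge 1$ pins the value at $1$ for all $d\ge 1$. (One can even bypass the normalisation step: if $\dim R_d\ge 2$, two independent $f,g\in R_d$ yield $f/g$ transcendental over $\C$, so $\dim R_{Nd}\ge N+1\to\infty$, contradicting eventual constancy.)

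The converse $(\Leftarrow)$, however, contains a genuine gap. You assert that the localisation $A[T^{-1}]$ ``remains finitely generated over $\C$ and therefore has finite-dimensional graded components.'' That implication is simply false for $\Z^k$-graded algebras: take $B=\C[x,y,z,z^{-1}]$ with $\deg x=(1,0)$, $\deg y=(0,1)$, $\deg z=(1,1)$; it is a finitely generated $\Z^2$-graded domain, yet $B_{(0,0)}=\C[xyz^{-1}]$ is infinite-dimensional. Worse, after the throwaway remark that $T$ is unique up to scalar, your converse argument never actually uses the hypothesis $g_{d\alpha,d\beta,d\gamma}=1$; as written it would ``prove'' that $g_{\alpha,\beta,\gamma}\neq 0$ alone implies stability, which is false (the sequence $g_{d\alpha,d\beta,d\gamma}$ can be unbounded, and then already $(\lambda,\mu,\nu)=(0,0,0)$ defeats stability). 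The hypothesis must enter precisely here: $g_{d\alpha,d\beta,d\gamma}=1$ for all $d\ge 0$ gives $A_{(d\alpha,d\beta,d\gamma)}=\C T^d$, hence
\[ \bigl(A[T^{-1}]\bigr)_0 \;=\; \varinjlim_d\, T^{-d}\,\C T^d \;=\;\C, \]
and one then needs the (true but nontrivial) lemma that a finitely generated $\Z^k$-graded $\C$-domain $B$ with $B_0=\C$ has finite-dimensional graded pieces. Beware that this lemma is not an instance of graded Noether normalisation in the naive sense -- $\C[x,x^{-1}]$ is finite over no graded polynomial subring -- one proof observes that any homogeneous element whose opposite degree is occupied must be a unit (its product with a nonzero opposite-degree element lies in $B_0=\C$), that the degrees of non-unit homogeneous generators therefore span a pointed cone modulo the lattice of unit degrees, and that homogeneous units of degree $0$ are scalars, so only finitely many monomial classes occur in each degree. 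With that lemma supplied, your monotone-plus-bounded conclusion does go through, and the proposal becomes a complete proof.
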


This in particular leads us to define another notion close to this one:

\begin{de}
A triple $(\alpha,\beta,\gamma)\in\mathrm{PKron}_{n_1,n_2}$ is said to be almost stable if, for any positive integer $d$, $g_{d\alpha,d\beta,d\gamma}\leq 1$.
\end{de}

Stable triples are in particular of interest because of the following well-known result, which can for instance be found in \cite[Proposition 2]{manivel2}:

\begin{prop}
The set of stable triples in $\mathrm{Kron}_{n_1,n_2}$ is the intersection of $\mathrm{Kron}_{n_1,n_2}$ with a union of faces of the Kronecker cone $\mathrm{PKron}_{n_1,n_2}$.
\end{prop}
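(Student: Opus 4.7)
I would base the proof on the characterisation recalled just above: a triple $\lambda=(\alpha,\beta,\gamma)\in\mathrm{Kron}_{n_1,n_2}$ is stable if and only if $g_{d\alpha,d\beta,d\gamma}=1$ for every $d\in\N^*$; equivalently, the stretched sequence $q_\lambda(d):=g_{d\alpha,d\beta,d\gamma}$ is identically~$1$. The strategy is to interpret $q_\lambda$ as a quasi-polynomial arising from GIT, so that its behaviour is governed by the face structure of $\mathrm{PKron}_{n_1,n_2}$.

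Concretely, one writes $g_{\alpha,\beta,\gamma}=\dim\bigl(V_\alpha\otimes V_\beta\otimes V_\gamma^*\bigr)^{\gl(n_1)\times \gl(n_2)}$ via the natural inclusion $\gl(n_1)\times \gl(n_2)\hookrightarrow \gl(n_1n_2)$, and realises this by Borel--Weil as $\dim H^0(X,\mathcal{L}_\lambda)^G$ for a suitable flag variety $X$ and reductive group $G$. By the standard quasi-polynomial theorem for multiplicities of reductive group actions, $q_\lambda$ is then a quasi-polynomial in $d$ whose degree equals the dimension of the associated GIT quotient $X^{ss}(\lambda)/\!/G$, and this degree depends only on the relatively open face $F^\circ$ of $\mathrm{PKron}_{n_1,n_2}$ containing $\lambda$, since semistability is locally constant on these strata. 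Call a face $F$ \emph{stable} when $q_\lambda\equiv 1$ on $F^\circ$; the proposition then amounts to showing that the stable triples in $\mathrm{Kron}_{n_1,n_2}$ coincide with $\mathrm{Kron}_{n_1,n_2}\cap\bigcup F$, the union being over stable faces.

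One direction is tautological: a stable triple has $q_\lambda\equiv 1$, hence lies in a stable face. The converse is the crux, and it requires showing that the condition ``$g_{d\lambda}=1$ for all $d$'' propagates across the relative interior of any stable face --- this is the main obstacle. I would handle it geometrically, exploiting the fact that the GIT quotient attached to $\lambda$ is determined by the stratum $F^\circ$ in which $\lambda$ sits: when this quotient is zero-dimensional and reduced of length~$1$ for one interior $\lambda$, it is so for every other point of $F^\circ$, whence every lattice point of $\mathrm{Kron}_{n_1,n_2}$ lying in $F^\circ$ is stable. Boundary points of $F$ lie in faces of smaller dimension and are absorbed by induction on the face dimension, which closes the argument and yields the desired decomposition as a union of faces.
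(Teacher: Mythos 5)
Your overall strategy---translate via Borel--Weil into GIT and study the stretched multiplicity $q_\lambda(d)=g_{d\alpha,d\beta,d\gamma}$ as a quasi-polynomial---is a plausible route (the paper itself gives no proof and simply cites Manivel), but the crucial step, namely that $q_\mu\equiv 1$ for every lattice point $\mu$ of $F^\circ\cap\mathrm{Kron}_{n_1,n_2}$ as soon as it holds for one such $\lambda$, is not actually closed by the argument you give, and this is exactly the step you yourself flag as the crux.

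Two concrete problems. First, the property you invoke, that the GIT quotient is ``zero-dimensional and reduced of length one,'' is not the right invariant. Since $X$ is integral, the invariant section ring $R^\bullet(\mu)=\bigoplus_d H^0(X,\mathcal L_\mu^{\otimes d})^G$ is a graded domain, so its $\mathrm{Proj}$ is automatically integral; a zero-dimensional $\mathrm{Proj}$ of a graded domain is always a single reduced point. What can fail is that $\dim R^d(\mu)>1$ for some small $d$: a one-dimensional graded domain need not be a polynomial ring generated in degree one (for instance $\mathbb C[t,s]/(s^2-t^4)$ with $\deg t=1$, $\deg s=2$ has zero-dimensional reduced $\mathrm{Proj}$ yet $\dim R^d=2$ for all $d\ge2$). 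Quasi-polynomiality of degree $0$ only controls $q_\mu(d)$ for $d\gg0$ and says nothing about $q_\mu(1)=g_\mu$ or $q_\mu(2)$, which is precisely what separates stable from non-stable. Second, even your assertion that the degree depends only on the relatively open face $F^\circ$ is not automatic: the GIT chamber decomposition is in general strictly finer than the face decomposition of $\mathrm{PKron}_{n_1,n_2}$, so ``semistability is locally constant on these strata'' needs an argument.

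The missing ingredient is the monotonicity coming from multiplication of sections: since $X$ is integral, any nonzero $s\in H^0(X,\mathcal L_\nu)^G$ gives an injection $H^0(X,\mathcal L_\mu)^G\hookrightarrow H^0(X,\mathcal L_{\mu+\nu})^G$; in particular $d\mapsto g_{d\mu}$ is nondecreasing for $\mu\in\mathrm{Kron}_{n_1,n_2}$. With this in hand one can bypass quasi-polynomials entirely. If $\lambda$ is stable and lies in the relative interior of a face $F$, and $\mu\in F\cap\mathrm{Kron}_{n_1,n_2}$, then for $N\gg0$ the lattice point $\nu:=N\lambda-\mu$ again lies in $F\subset\mathrm{PKron}_{n_1,n_2}$, so $g_{M\nu}\ge1$ for some $M\ge1$, and for every $d\ge1$
\[ 1=g_{dMN\lambda}=g_{dM\mu+dM\nu}\ge g_{dM\mu}\ge g_{d\mu}\ge1, \]
the inequalities all coming from the injections above. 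Hence $g_{d\mu}=1$ and $\mu$ is stable. This treats the relative interior of $F$ and its boundary subfaces in one stroke, so your separate induction on face dimension is unnecessary. In short, your proposal correctly locates the difficulty but does not supply the argument that resolves it.
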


As a consequence we want to find ways to produce such faces of $\mathrm{PKron}_{n_1,n_2}$, which contain only stable triples, or possibly almost stable ones. There already exists one result in this direction, proven independently by L. Manivel (in \cite{manivel}) and E. Vallejo (in \cite{vallejo}), expressed in terms of additive matrices.

\begin{de}
A matrix $A=(a_{i,j})_{i,j}\in\mathcal{M}_{n_1,n_2}(\Z_{\geq 0})$ is said to be additive if there exist integers $x_1>\dots>x_{n_1}$ and $y_1>\dots>y_{n_2}$ such that, for all $(i,j),(k,l)\in\llbracket 1,n_1\rrbracket\times\llbracket 1,n_2\rrbracket$,
\[ a_{i,j}>a_{k,l}\Longrightarrow x_i+y_j>x_k+y_l. \]
\end{de}

The result of Manivel and Vallejo is then that any such additive matrix gives an explicit face of $\mathrm{PKron}_{n_1,n_2}$ which contains only stable triples. This face is moreover regular, which means that it contains some triple $(\alpha,\beta,\gamma)$ of regular partitions (i.e. $\alpha$, $\beta$, and $\gamma$ have respectively $n_1$, $n_2$, and $n_1n_2$ pairwise distinct parts, with the last one being possibly 0), and it has the minimal dimension possible for a regular face: $n_1n_2$.

\vspace{5mm}

In this article we obtain results producing, from an additive matrix, more faces of this kind. Actually, rather than looking precisely at an additive matrix, we look instead at what we call the order matrix, which sort of ``encodes the type'' of the additive matrix: considering an additive matrix $A=(a_{i,j})_{i,j}$ whose coefficients are pairwise distinct\footnote{This assumption is in fact not a restriction on the faces that we produce at the end.}, instead of the coefficients of $A$ we write their rank in the decreasingly-ordered sequence of the $a_{i,j}$'s. Then our first result (see Section \ref{cas cardinal 1}) is:

\begin{theo}
Any configuration of the following type in the order matrix:
\begin{center}
\begin{tikzpicture}
\node (un) at (0,0) {$k$};
\draw (un) circle (5mm);
\node (deux) at (1,0) {$k+1$};
\draw (deux) circle (5mm);
\node (trois) at (4,0) {row $i$};
\draw[->] (trois) -- (2,0);
\node (quatre) at (0,-2) {$j$};
\draw[->] (quatre) -- (0,-1);
\node (cinq) at (1,-2) {$j+1$};
\draw[->] (cinq) -- (1,-1);
\node at (-1,-2) {columns};
\end{tikzpicture}
\end{center}
gives an explicit regular face of the Kronecker cone $\mathrm{PKron}_{n_1,n_2}$, of dimension $n_1n_2$, containing only stable triples.
The same result is true for each configuration of the type
\begin{center}
\begin{tikzpicture}
\node (un) at (0,0) {$k$};
\draw (un) circle (5mm);
\node (deux) at (0,-1) {$k+1$};
\draw (deux) circle (5mm);
\node (trois) at (3,0) {row $i$};
\draw[->] (trois) -- (1,0);
\node (quatre) at (3,-1) {row $i+1$};
\draw[->] (quatre) -- (1,-1);
\node (cinq) at (0,-3) {column $j$};
\draw[->] (cinq) -- (0,-2);
\end{tikzpicture}
\end{center}
\end{theo}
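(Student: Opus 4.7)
The plan is to apply Ressayre's theory of dominant and well-covering pairs, which is the geometric framework advertised in the introduction. Such a pair consists of a one-parameter subgroup $\tau$ of a maximal torus in $G=\gl_{n_1}\times\gl_{n_2}\times\gl_{n_1n_2}$ together with a component of the fixed locus of the action of $\tau$ on the appropriate flag variety; dominance yields a face of $\mathrm{PKron}_{n_1,n_2}$, while the stronger well-covering condition forces every triple on this face to be stable.

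I would begin by choosing an additive matrix $A=(a_{i,j})$ with pairwise distinct entries realizing the prescribed configuration, together with certifying sequences $x_1>\dots>x_{n_1}$ and $y_1>\dots>y_{n_2}$; the underlying Manivel--Vallejo pair is built from the weights $x_i+y_j$. The key new input from the hypothesis is the ``rank collision'' at the two circled cells: the entries $a_{i,j}$ and $a_{i,j+1}$ are consecutive in rank and sit in the same row, so that the only strict comparison predicted by $A$ between these two positions is forced by the single inequality $y_j>y_{j+1}$. The idea is then to modify $\tau$ by equalizing these two offending weights, producing a new one-parameter subgroup $\tau'$ whose associated Levi in $G$ contains an extra $\mathrm{SL}_2$-factor acting by exchanging the columns $j$ and $j+1$ in the second tensor factor. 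The desired face should be read off from a distinguished component of the fixed locus of $\tau'$.

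The argument then splits into three verifications. Dominance should follow from the additivity of $A$, since all strict comparisons between other pairs of cells still hold with a positive margin after equalization. The dimension of the face is $n_1n_2$, which comes out of a standard rank count for $\tau'$ once one quotients out the obvious diagonal central torus. I expect the main obstacle to be well-coveringness: one must check that a certain fibred product of flag varieties built from the parabolic data attached to $\tau'$ has exactly the expected dimension, and that the associated GIT quotient map is well-covered in Ressayre's sense. This is precisely where the combinatorial assumption that the two colliding ranks are \emph{consecutive} and \emph{adjacent} must enter, by identifying the tangent space contributions of non-positive weight along the new $\mathrm{SL}_2$-direction and showing that no unexpected obstruction appears. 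The column-adjacent case of the theorem is then treated symmetrically, by exchanging the roles of the two factors $\gl_{n_1}$ and $\gl_{n_2}$.
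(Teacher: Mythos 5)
Your framing — well-covering pairs in Ressayre's sense, with the ``rank collision'' in the order matrix as the key combinatorial datum — is the right one, but the mechanism you propose points in the wrong direction. You suggest degenerating the torus action, replacing $\tau$ by a $\tau'$ with $y_j=y_{j+1}$, so that the Levi $G^{\tau'}$ (and \emph{a fortiori} $\hat{G}^{\tau'}$, which then acquires a $\gl_2$-block in every row) is no longer a torus, and reading the face off a component $C'$ of $X^{\tau'}$. The paper does the opposite: it keeps $\tau$ \emph{dominant, regular, and $\hat{G}$-regular} throughout — so that by Remark \ref{remark_dominant_regular_case} the component $C$ is a single $T$-fixed point — and instead enlarges the Schubert datum. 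Concretely, the $\ell(v)=0$ case recovers Manivel--Vallejo (Theorem \ref{thm_length_zero}); the configurations of the present theorem are exactly those for which $v$ can be taken to be a single simple reflection, $v=(\mathsf{1},(j\ j+1))$ or $v=((i\ i+1),\mathsf{1})$, with $\hat{v}^{-1}=\hat{w}(k\ k+1)\hat{w}_0$. Dominance is then checked by the root-matching criterion of Proposition \ref{condition dominant pair}, and well-covering by computing the intersection of $\iota(X_v)$ with a generic translate of the codimension-one Schubert variety $\hat{X}_{\hat{v}}$ in $\hat{G}/\hat{B}$ — it is a single point — followed by Lemma \ref{lemma_covering_well}.

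Your route, as described, loses precisely the two conclusions the theorem asserts. First, once $\tau'$ fails to be $\hat{G}$-regular the component $C'$ is positive-dimensional, so Theorem \ref{result_tau_dominant} no longer applies: the dimension $n_1n_2$ is not guaranteed, and by Lemma \ref{lemma_uniqueness} the resulting face cannot even coincide with the one produced in the paper, since a singleton and a positive-dimensional component are never $G$-conjugate. Second, with $C'$ no longer reduced to a point, Corollary \ref{result_stable_triple} only yields \emph{almost} stable triples even granting a dense $G^{\tau'}$-orbit; the upgrade to full stability in the paper hinges on $T$ acting trivially on the fibre over the unique $T$-fixed point, a criterion that has no direct analogue for your $C'$. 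So the correct move is not to equalize the weights, but to leave $\tau$ generic and shift the Weyl group datum $(v,\hat{v})$ by one simple reflection.
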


Then we also obtain another result, concerning other types of configurations in the order matrix (called Configurations \circled{A} to \circled{E}, and involving now three or four coefficients of the order matrix, see Section \ref{cas cardinal 2}):

\begin{theo}
Each one of the Configurations {\normalfont\circled{A}} to {\normalfont\circled{E}} in the order matrix gives a face -- not necessarily regular and possibly reduced to zero -- of the Kronecker cone $\mathrm{PKron}_{n_1,n_2}$ which contains only almost stable triples.
\end{theo}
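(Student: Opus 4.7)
The strategy is to extend the geometric approach used for the previous theorem -- based on Ressayre's criteria on dominant and well-covering pairs -- to the richer patterns involved in Configurations \circled{A} through \circled{E}. The framework is the interpretation of $\mathrm{PKron}_{n_1,n_2}$ as a GIT eigencone for the action of $G=\gl_{n_1}\times\gl_{n_2}\times\gl_{n_1n_2}$ on the relevant tensor space, in which a configuration in the order matrix naturally specifies a one-parameter subgroup $\tau$ of a maximal torus of $G$ (through the relative ranks of its marked entries) together with a fixed-point component $C$ of the induced action.

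For each of Configurations \circled{A}--\circled{E}, I would carry out three steps. First, translate the configuration into an explicit $1$-PS $\tau$ and its fixed-point component $C$: the three or four marked positions together prescribe the equations identifying the weights of $\tau$ (up to rescaling) and cut out $C$. Second, verify Ressayre's numerical criterion -- here the dominant-pair condition, which is weaker than well-covering: it suffices to produce a face of $\mathrm{PKron}_{n_1,n_2}$ but may yield degenerate faces. Third, establish almost stability on the face produced, i.e.\ show that $g_{d\alpha,d\beta,d\gamma}\leq 1$ for every $(\alpha,\beta,\gamma)$ in the face and every $d\geq 1$.

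The main obstacle is this third step. In the two-coefficient setting, the fixed-point component is small enough that the associated invariant space is automatically one-dimensional, which yields true stability. With three or four interacting coefficients, $C$ becomes larger, its Levi subgroup richer, and the relevant invariant space may a priori jump above dimension one; checking that it remains bounded by one requires a dedicated combinatorial analysis of the weight structure of $C$ in each of the five cases. This is also what forces the weaker conclusion (almost stable rather than stable) and the potential degeneracy of the face: it may fail to be regular, its dimension can drop below $n_1n_2$, and -- as the statement records -- it may collapse to $\{0\}$ when the dominant-pair conditions force equalities among coordinates of $\alpha$, $\beta$ or $\gamma$. I therefore expect most of the work to be bookkeeping: setting up a uniform notation for the $1$-PS data attached to a configuration, proving the dominance criterion once in that notation, and then dispatching the five cases by a direct inspection of the at-most-rank-one invariant space.
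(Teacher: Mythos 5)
There is a genuine gap, and it centers on what the configurations actually determine and why the conclusion is weaker than in the length-one case.

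First, you reverse the roles of the data. In the paper's setup, the dominant, regular, $\hat{G}$-regular one-parameter subgroup $\tau$ is fixed in advance (coming from the additive matrix / order matrix); it is not specified by the marked entries of a configuration. What a Configuration \circled{A}--\circled{E} determines is the pair of Weyl group elements $(v^{-1},\hat v^{-1})$, hence the candidate fixed-point component $C=\{(v^{-1}B/B,\hat v^{-1}\hat B/\hat B)\}$. The enumeration of configurations is exactly a case analysis of which choices of $v$ with $\ell(v)=2$ and $\hat v$ with $\ell(\hat w^{-1}\hat v^{-1}\hat w_0)=2$ can satisfy the sufficient criterion of Proposition~\ref{condition dominant pair}, namely that restriction of roots gives a bijection $\hat w\hat\Phi\big(((\hat v\hat w)^\vee)^{-1}\big)\to\Phi(v^{-1})$.

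Second, and more importantly, your step three rests on a false premise. You claim that with three or four interacting coefficients ``$C$ becomes larger, its Levi subgroup richer, and the relevant invariant space may a priori jump above dimension one,'' and that a ``dedicated combinatorial analysis of the weight structure of $C$'' is needed to bound it by one. But $\tau$ is still regular and $\hat G$-regular throughout this section, so $G^\tau=T$, $\hat G^\tau=\hat T$, and $C$ is \emph{always a singleton}. The bound $g_{d\alpha,d\beta,d\gamma}\le 1$ is then automatic from Remark~\ref{remark dominant} together with Remark~\ref{remark_dominant_regular_case}: dominance gives an injection $\h^0(X,\Lb^{\otimes d})^G\hookrightarrow\h^0(C,\Lb|_C^{\otimes d})^{T}$, and the target is at most one-dimensional because $C$ is a point. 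No combinatorial analysis of weights is required. What actually forces the weaker conclusion (almost stable rather than stable, face possibly non-regular or zero) is precisely that in these configurations the paper only verifies \emph{dominance}, not the stronger \emph{well-covering} condition (which in the length-one case was obtained via the Schubert condition of Lemma~\ref{schubert_condition_lemma}). Without well-covering, one only gets an injection rather than an isomorphism of invariant spaces, so one cannot guarantee nonemptiness or regularity of the face. Your proposal as written both over-engineers the almost-stability step and gives the wrong structural reason for the degeneracy of the conclusion.
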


Obtaining these results is based on the notions of dominant and well-covering pairs, coming from the work of N. Ressayre, that we present in Section \ref{basics_well-covering_pairs}. At the end of this article (in Section \ref{application_small_examples}), we apply our results as well as Manivel and Vallejo's to all possible order matrices of small size (namely 2$\times$2, 3$\times$2, and 3$\times$3) in order to have a look at the number of new interesting faces of $\mathrm{PKron}_{n_1,n_2}$ that we can produce.

\vspace{5mm}

\textit{Acknowledgements:} I would like to thank Nicolas Ressayre for his advice and for invaluable discussions during the preparation of this article. I also acknowledge support from the French ANR (ANR project ANR-15-CE40-0012).

\section{Definitions and a few existing general results}\label{basics_well-covering_pairs}

\subsection{Definitions in the general context}\label{very_general_context}

For now $G$ is a connected complex reductive group acting on a smooth projective variety $X$. Let us consider a maximal torus $T$ in $G$, $\tau$ a one-parameter subgroup of $T$ (denoted by $\tau\in X_*(T)$), and $C$ an irreducible component of $X^\tau$, the set of points in $X$ fixed by $\tau$. We denote by $G^\tau$ the centraliser of $\tau$ (i.e. of $\im\tau$) in $G$ and set
\[ P(\tau)=\{g\in G\text{ s.t. }\lim\limits_{t\to 0}\tau(t)g\tau(t^{-1})\text{ exists}\}. \]
Notice that $P(\tau)$ is a parabolic subgroup of $G$ and that $G^\tau$ is the Levi subgroup of $P(\tau)$ containing $T$. Consider then
\[ C^+=\{x\in X\text{ s.t. }\lim\limits_{t\to 0}\tau(t).x\in C\}, \]
which is a $P(\tau)$-stable. For any $x\in C$, we define the following subspaces of $\mathrm{T}_x X$, the Zariski tangent space of $X$ at $x$:
\[ \begin{array}{l}
\mathrm{T}_x X_{>0}=\{\xi\in\mathrm{T}_x X\text{ s.t. }\lim\limits_{t\to 0}\tau(t).\xi=0\},\\
\mathrm{T}_x X_{<0}=\{\xi\in\mathrm{T}_x X\text{ s.t. }\lim\limits_{t\to 0}\tau(t^{-1}).\xi=0\},\\
\mathrm{T}_x X_0=(\mathrm{T}_x X)^\tau,\\
\mathrm{T}_x X_{\geq 0}=\mathrm{T}_x X_{>0}\oplus\mathrm{T}_x X_0,\\
\mathrm{T}_x X_{\leq 0}=\mathrm{T}_x X_{<0}\oplus\mathrm{T}_x X_0.
\end{array} \]

\begin{theo}[Bia\l{}ynicki-Birula]\label{bialynicki-birula_thm}$\hphantom{a}$\\
\vspace{-5mm}
\begin{itemize}
\item[(i)] $C$ is smooth and, for any $x\in C$, $\mathrm{T}_x C=\mathrm{T}_x X_0$.
\item[(ii)] $C^+$ is smooth and irreducible and, for any $x\in C$, $\mathrm{T}_x C^+=\mathrm{T}_x X_{\geq 0}$.
\end{itemize}
\end{theo}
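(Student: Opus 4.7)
My plan is to prove both statements by reducing everything to a linear model around an arbitrary fixed point. The key input is a $\tau$-equivariant local structure theorem: since $X$ is smooth and $\tau$ factors through a torus, for every $x\in X^\tau$ there exists a $\tau$-equivariant étale neighbourhood of $x$ in $X$ isomorphic to a $\tau$-stable neighbourhood of $0$ in $\mathrm{T}_x X$ equipped with the linearised action (the one obtained by differentiating $\tau$). Under this identification all questions about fixed points and about limits $\lim_{t\to 0}\tau(t).y$ reduce to weight-space bookkeeping on the $\mathbb{G}_m$-representation $\mathrm{T}_x X=\bigoplus_{n\in\Z}(\mathrm{T}_x X)_n$.

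For (i), in the local model $X^\tau$ corresponds precisely to $(\mathrm{T}_x X)^\tau=\mathrm{T}_x X_0$, which is a linear subspace. A linear subspace is smooth and coincides with its own tangent space at $0$, so $X^\tau$ is smooth at every fixed point with tangent space $\mathrm{T}_x X_0$. The irreducible component $C$ then inherits both smoothness and the description of its tangent space.

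For (ii), in the same linear model a point $\xi\in\mathrm{T}_x X$ sufficiently close to $0$ satisfies $\lim_{t\to 0}\tau(t).\xi\in C$ if and only if its weight decomposition involves only non-negative weights, i.e. $\xi\in\mathrm{T}_x X_{\geq 0}$. Hence $C^+$ is locally identified with the linear subspace $\mathrm{T}_x X_{\geq 0}$, giving smoothness at $x$ and $\mathrm{T}_x C^+=\mathrm{T}_x X_{\geq 0}$. To obtain irreducibility globally, I would study the retraction $\pi\colon C^+\to C$, $y\mapsto\lim_{t\to 0}\tau(t).y$. This is a well-defined morphism because $X$ is projective: the map $t\mapsto\tau(t).y$ defined on $\mathbb{G}_m$ extends uniquely to $\mathbb{A}^1$ by properness, and the value at $0$ depends algebraically on $y$. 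The local model shows that $\pi$ is a Zariski-locally trivial affine-space bundle (with fibres modelled on $\mathrm{T}_x X_{>0}$), so $C^+$ is an affine bundle over the irreducible base $C$ and is therefore irreducible.

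The main obstacle is establishing the equivariant étale local model. The argument proceeds in two stages: first one shows that the completed local ring $\widehat{\mathcal{O}}_{X,x}$ is $\tau$-equivariantly isomorphic to the completed local ring of $\mathrm{T}_x X$ at $0$, by using complete reducibility of $\mathbb{G}_m$-representations to lift a $\tau$-equivariant splitting of the cotangent space $\mathfrak{m}_x/\mathfrak{m}_x^2$ to the whole completion; second one algebraises this formal isomorphism to an actual étale neighbourhood via a Luna-type construction, which is available because $\overline{\im\tau}$ is a torus and hence reductive. Once this local model is in hand, everything else is a short exercise in weight decompositions and properness.
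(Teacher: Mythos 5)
The paper does not prove this theorem; it is cited as a classical result of Bia\l{}ynicki-Birula and simply used. So your sketch can only be judged on its own terms. Your plan -- linearise the $\C^*$-action $\tau$-equivariantly around a fixed point (via complete reducibility of $\C^*$-representations and a Luna-type slice), then read smoothness and tangent spaces off the linear model -- is indeed the standard route, and part (i) is essentially correct as you sketch it.

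For part (ii), however, there is a genuine gap. You assert that in the local model $\lim_{t\to 0}\tau(t).\xi\in C$ \emph{if and only if} $\xi\in\mathrm{T}_x X_{\geq 0}$. The ``if'' direction is immediate. But the ``only if'' direction is the hard content of the theorem and cannot be read off the étale chart alone: when $\xi$ has a nonzero negative-weight component, the orbit $\tau(t).y$ leaves the chart as $t\to 0$, so the limit (which exists in $X$ by properness) lies outside the neighbourhood of $x$ -- and nothing \emph{local} rules out its landing on a far-away point of $C$ when $\dim C>0$. Without this direction you only get the inclusion $\mathrm{T}_x X_{\geq 0}\subseteq\mathrm{T}_x C^+$, not equality, hence neither smoothness of $C^+$ nor the dimension statement. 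A global ingredient is needed; the standard one is a $\tau$-equivariant projective embedding $X\hookrightarrow\mathbb{P}(V)$ (Sumihiro, or via a $\C^*$-linearised ample bundle), where the plus-cell of a weight subspace $\mathbb{P}(V_m)$ is an explicit linear cell on which the limit map is a coordinate projection. One then writes $C^+$ as an open-and-closed subset of $X\cap(\text{plus-cell of }\mathbb{P}(V_m))$ and compares with the chart. The same global ingredient is what makes $\pi\colon C^+\to C$ a \emph{morphism}: your appeal to the valuative criterion shows that the extension exists for each individual $y$, but does not by itself show that $y\mapsto\pi(y)$ is algebraic on a family -- and note the circularity that $C^+$ is itself defined through $\pi$. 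The equivariant embedding is precisely what breaks that circle.
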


\noindent We can now consider
\[ \begin{array}{rccl}
\eta: & G\times_{P(\tau)} C^+ & \longrightarrow & X\\
 & [g:x] & \longmapsto & g.x
\end{array}. \]
The following definition comes from \cite{ressayre}.

\begin{de}
The pair $(C,\tau)$ is said to be dominant if $\eta$ is, and covering if $\eta$ is birational. It is said to be well-covering when $\eta$ induces an isomorphism onto an open subset of $X$ intersecting $C$.
\end{de}

\subsection{In the context of Kronecker coefficients}

We consider from now on $V_1$ and $V_2$ two complex vector spaces of dimension respectively $n_1$ and $n_2$. We then set $G_1=\gl(V_1)$, $G_2=\gl(V_2)$, $G=G_1\times G_2$, $\hat{G}=\gl(V_1\otimes V_2)$. We also choose $T_1$, $T_2$, $T=T_1\times T_2$, and $\hat{T}\supset T$ respective maximal tori, and $B_1$, $B_2$, $B=B_1\times B_2$, $\hat{B}$ respective Borel subgroups containing the corresponding tori. We use moreover the following notation: if $V'$ is a complex vector space of dimension $n'$ and $B'$ a Borel subgroup of $\gl(V')$, then any $n'$-tuple of integers $\lambda$ gives a character of $B'$, and we denote by $\C_\lambda(B')$ the one-dimensional complex representation of $B'$ given by this character. Recall then a classical interpretation of the Kronecker coefficients (see for instance \cite[Section 2.1]{article1}):

\begin{prop}
If $\alpha$, $\beta$, and $\gamma$ are partitions of lengths at most $n_1$, $n_2$ and $n_1n_2$ respectively, set:
\[ \Lb_\alpha=G_1\times_{B_1}\C_{-\alpha}(B_1),\quad \Lb_\beta=G_2\times_{B_2}\C_{-\beta}(B_2),\quad \Lb_\gamma=\hat{G}\times_{\hat{B}}\C_{-\gamma}(\hat{B}), \]
which are line bundles on $G_1/B_1$, $G_2/B_2$, and $\hat{G}/\hat{B}$ respectively. Then one has a $G$-linearised line bundle $\Lb_{\alpha,\beta,\gamma}=\Lb_\alpha\otimes\Lb_\beta\otimes\Lb^*_\gamma$ on $G/B\times\hat{G}/\hat{B}$ such that:
\[ g_{\alpha,\beta,\gamma}=\dim\h^0(G/B\times\hat{G}/\hat{B},\Lb_{\alpha,\beta,\gamma})^G. \]
\end{prop}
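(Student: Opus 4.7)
The plan is to combine the Borel--Weil theorem with a double application of Schur--Weyl duality; this proposition is a classical dictionary entry between the symmetric-group and general-linear-group points of view. First I would apply Borel--Weil to each factor: with the sign convention chosen in the statement, $\h^0(G_1/B_1,\Lb_\alpha)$ is the irreducible polynomial $G_1$-module $W_\alpha$ of highest weight $\alpha$ (the twist by $-\alpha$ in the definition of $\Lb_\alpha$ is precisely what makes this hold, as opposed to yielding its dual), and similarly $\h^0(G_2/B_2,\Lb_\beta)=W_\beta$ and $\h^0(\hat G/\hat B,\Lb_\gamma)=W_\gamma$. The Künneth formula then produces an isomorphism of $G\times\hat G$-modules
\[
\h^0(G/B\times\hat G/\hat B,\Lb_{\alpha,\beta,\gamma}) \cong W_\alpha\otimes W_\beta\otimes W_\gamma^{*},
\]
and taking $G$-invariants yields
\[
\dim\h^0(G/B\times\hat G/\hat B,\Lb_{\alpha,\beta,\gamma})^G = \dim\Hom_G(W_\gamma,\,W_\alpha\otimes W_\beta),
\]
i.e.\ the multiplicity of the irreducible $G$-module $W_\alpha\otimes W_\beta$ inside the restriction to $G\subset\hat G$ of the irreducible $\hat G$-module $W_\gamma$.

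Next I would identify this branching multiplicity with the Kronecker coefficient $g_{\alpha,\beta,\gamma}$ via Schur--Weyl duality. Setting $k=|\alpha|$ and assuming $|\beta|=|\gamma|=k$ as well (otherwise central-character considerations force both sides to vanish, in agreement with the convention $g_{\alpha,\beta,\gamma}=0$), I would study $(V_1\otimes V_2)^{\otimes k}$ from two points of view. On the one hand, Schur--Weyl duality for $\hat G$ gives
\[
(V_1\otimes V_2)^{\otimes k} \cong \bigoplus_{\gamma'} W_{\gamma'}\otimes M_{\gamma'}
\]
as a $\hat G\times\gs_k$-module. On the other hand, identifying $(V_1\otimes V_2)^{\otimes k}$ with $V_1^{\otimes k}\otimes V_2^{\otimes k}$ (with $\gs_k$ acting diagonally) and applying Schur--Weyl duality separately to each tensor factor yields
\[
(V_1\otimes V_2)^{\otimes k} \cong \bigoplus_{\alpha',\beta'} W_{\alpha'}\otimes W_{\beta'}\otimes(M_{\alpha'}\otimes M_{\beta'})
\]
as a $G\times\gs_k$-module. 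Comparing the $M_\gamma$-isotypic components of these two decompositions, and using the very definition of $g_{\alpha',\beta',\gamma}$ as the multiplicity of $M_\gamma$ in $M_{\alpha'}\otimes M_{\beta'}$, produces an isomorphism of $G$-modules
\[
W_\gamma|_G \cong \bigoplus_{\alpha',\beta'} g_{\alpha',\beta',\gamma}\,W_{\alpha'}\otimes W_{\beta'},
\]
whose $(W_\alpha\otimes W_\beta)$-isotypic component has multiplicity exactly $g_{\alpha,\beta,\gamma}$, matching the branching multiplicity computed in the previous paragraph.

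The main thing to watch here is bookkeeping rather than any genuine conceptual difficulty: one must track the dualisation of $\Lb_\gamma$ so that the Künneth decomposition produces $W_\gamma^{*}$ (and hence the right branching multiplicity after taking $G$-invariants), and verify separately that when the three partitions do not share a common size both sides vanish. Beyond these checks the result is essentially formal once the two standard tools---Borel--Weil on the geometric side and Schur--Weyl on the combinatorial side---are brought together.
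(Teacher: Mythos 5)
The paper does not actually prove this proposition; it simply cites it as classical (``see for instance [Section 2.1, article1]''). So there is no internal proof to compare against, and the appropriate question is whether your blind argument is correct. The overall strategy you chose — Borel--Weil for each flag-variety factor, Künneth, then a double application of Schur--Weyl duality to identify the branching multiplicity with the Kronecker coefficient — is indeed the standard route, and the Schur--Weyl half of your argument is sound.

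However, there is a genuine gap in the geometric half, and it is not merely the ``bookkeeping'' you defer at the end. You assert both that $\h^0(\hat G/\hat B,\Lb_\gamma)\cong W_\gamma$ and, implicitly via Künneth applied to $\Lb_{\alpha,\beta,\gamma}=\Lb_\alpha\boxtimes\Lb_\beta\boxtimes\Lb_\gamma^{*}$, that $\h^0(\hat G/\hat B,\Lb_\gamma^{*})\cong W_\gamma^{*}$. These two statements are mutually exclusive whenever $\gamma$ is a non-trivial partition: if a line bundle on a projective variety has a nonzero space of global sections, it is effective, and its dual then has no nonzero global sections at all (one cannot recover $(\h^0(\mathcal L))^{*}$ as $\h^0(\mathcal L^{*})$). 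As written, your Künneth line would in fact give $\h^0(X,\Lb_{\alpha,\beta,\gamma})=\h^0(\Lb_\alpha)\otimes\h^0(\Lb_\beta)\otimes\h^0(\Lb_\gamma^{*})=0$, and the whole computation would collapse.

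What actually has to be checked is that the Borel subgroup $\hat B$ and the twist $\C_{-\gamma}(\hat B)$ are normalised so that it is $\Lb_\gamma^{*}$ (and not $\Lb_\gamma$) which lies on the effective side, with $\h^0(\hat G/\hat B,\Lb_\gamma^{*})$ being the relevant irreducible (equivalently, there is a hidden $\hat w_0$ in the identification). Once that is in place, the Künneth step yields a module of the form $W_\alpha^{\pm}\otimes W_\beta^{\pm}\otimes W_\gamma^{\mp}$ whose $G$-invariants compute $\dim\Hom_G(W_\alpha\otimes W_\beta,\,W_\gamma|_G)$ regardless of which side carries the dual, and your Schur--Weyl computation of that branching multiplicity as $g_{\alpha,\beta,\gamma}$ is correct. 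So the skeleton is right, but the sentence claiming the Künneth isomorphism does not follow from the Borel--Weil identifications you set up immediately before it; that inference needs to be replaced by a careful choice (and statement) of the Borel--Weil convention for $\hat B$ so that taking the dual of the line bundle, rather than killing the cohomology, produces the dual module.
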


We consider in addition parabolic subgroups $P$ of $G$ and $\hat{P}$ of $\hat{G}$ containing the Borel subgroups. All corresponding Lie algebras will be denoted with lower case gothic letters. We consider
\[ X=G/P\times\hat{G}/\hat{P}, \]
on which $G$ acts diagonally. We finally denote by $W$ and $\hat{W}$ the Weyl groups associated to $G$ and $\hat{G}$ respectively, and by $W_P$ (resp.$\hat{W}_{\hat{P}}$) the Weyl group of the Levi subgroup of $P$ (resp. $\hat{P}$) containing $T$ (resp. $\hat{T}$). It is canonically a subgroup of $W$ (resp. $\hat{W}$).

\vspace{5mm}

We also give notations concerning the root systems: let us denote by $\Phi$ (resp. $\hat{\Phi}$) the set of roots of $G$ (resp. $\hat{G}$), with $\Phi^+$ and $\Phi^-$ (resp. $\hat{\Phi}^+$ and $\hat{\Phi}^-$) the subsets of positive and negative ones with respect to the choice of $B$ (resp. $\hat{B}$). Finally, the set of roots of $\hat{\mathfrak{p}}$ is denoted by $\hat{\Phi}_{\hat{\mathfrak{p}}}$.

\vspace{5mm}

Let us consider $\tau\in X_*(T)$. It is known that the irreducible components of $X^\tau$ are the $G^\tau v^{-1}P/P\times\hat{G}^\tau\hat{v}^{-1}\hat{P}/\hat{P}$, for $v\in W_{P}\backslash W/W_{P(\tau)}$ and $\hat{v}\in \hat{W}_{\hat{P}}\backslash \hat{W}/\hat{W}_{\hat{P}(\tau)}$. We then fix two such $v$ and $\hat{v}$, and denote by $C$ the corresponding irreducible component of $X^\tau$. Therefore, if $(\alpha,\beta,\gamma)$ is a triple of partitions such that $\Lb_{\alpha,\beta,\gamma}$ descends to a line bundle on $X$ -- that we will also denote by $\Lb_{\alpha,\beta,\gamma}$ --\footnote{This corresponds to a simple condition concerning the forms of the partitions, which must correspond to the types of the partial flag varieties $G/P$ and $\hat{G}/\hat{P}$.} then, for any $x\in C$, $\C^*$ acts via $\tau$ on the fibre $(\Lb_{\alpha,\beta,\gamma})_x$ over $x$. This action is given by an integer $n$ which, since $C$ is an irreducible component, does not depend on $x\in C$. We then set $\mu^{\Lb_{\alpha,\beta,\gamma}}(C,\tau)=-n$.

\begin{lemma}
For any dominant pair $(C,\tau)$ we consider the set of all  triples $(\alpha,\beta,\gamma)\in\mathrm{PKron}_{n_1,n_2}$ such that $\mu^{\Lb_{\alpha,\beta,\gamma}}(C,\tau)=0$. Then it is a face of $\mathrm{PKron}_{n_1,n_2}$ (possibly reduced to zero). Moreover it can also be described as:
\[ \{(\alpha,\beta,\gamma)\text{ s.t. }X^{ss}(\Lb_{\alpha,\beta,\gamma})\cap C\neq\emptyset\}. \]
We denote this face by $\mathcal{F}(C)$.
\end{lemma}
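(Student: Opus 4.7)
The plan is to extract from $\mu^{\Lb_{\alpha,\beta,\gamma}}(C,\tau)$ two properties that immediately yield the face structure: linearity in $(\alpha,\beta,\gamma)$, and non-positivity on $\mathrm{PKron}_{n_1,n_2}$. Linearity is immediate from the definition, since the weight of $\tau$ on $(\Lb_{\alpha,\beta,\gamma})_x$ is additive in the tensor decomposition $\Lb_\alpha\otimes\Lb_\beta\otimes\Lb_\gamma^*$. Once non-positivity is obtained, the hyperplane $\{\mu^{\Lb_{\alpha,\beta,\gamma}}(C,\tau)=0\}$ is a supporting hyperplane for $\mathrm{PKron}_{n_1,n_2}$, and its intersection with the cone is a face (possibly reduced to $\{0\}$).

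To establish non-positivity, I take $(\alpha,\beta,\gamma)\in\mathrm{PKron}_{n_1,n_2}$ and, after replacing it by a suitable positive multiple (which alters neither the semi-stable locus nor the sign of $\mu$), I may assume $X^{ss}(\Lb_{\alpha,\beta,\gamma})\neq\emptyset$. Dominance of $(C,\tau)$ means that the image of $\eta$ is dense in $X$, so $G\cdot C^+$ meets this non-empty open set; $G$-invariance of the semi-stable locus then yields a point $x\in X^{ss}(\Lb_{\alpha,\beta,\gamma})\cap C^+$. By definition of $C^+$ the limit $y=\lim_{t\to 0}\tau(t)\cdot x$ lies in $C$, and the Hilbert--Mumford numerical criterion applied to the semi-stable point $x$ along $\tau$ gives $\mu^{\Lb_{\alpha,\beta,\gamma}}(x,\tau)\leq 0$. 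This quantity is by construction equal to $\mu^{\Lb_{\alpha,\beta,\gamma}}(y,\tau)=\mu^{\Lb_{\alpha,\beta,\gamma}}(C,\tau)$, which concludes the argument and yields the face structure.

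For the alternative description, the inclusion $\{X^{ss}(\Lb_{\alpha,\beta,\gamma})\cap C\neq\emptyset\}\subseteq\{\mu=0\}$ is the easy one: picking $y$ in the intersection and a $G$-invariant section $\sigma$ with $\sigma(y)\neq 0$, $\tau$-equivariance of $\sigma$ combined with $y$ being fixed by $\tau$ forces $\sigma(y)\in(\Lb_{\alpha,\beta,\gamma})_y$ to be $\tau$-invariant, whereas $\tau$ acts on this one-dimensional fibre with weight $-\mu^{\Lb_{\alpha,\beta,\gamma}}(C,\tau)$, forcing $\mu=0$. The reverse inclusion is the main obstacle: starting from $x\in X^{ss}(\Lb_{\alpha,\beta,\gamma})\cap C^+$ as above, I must show that $y=\lim_{t\to 0}\tau(t)\cdot x\in C$ is still semi-stable. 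I would choose a $G$-invariant section $\sigma$ of a suitable power of $\Lb_{\alpha,\beta,\gamma}$ non-vanishing at $x$, translate $x$ along the $\tau$-orbit so as to place it in a neighbourhood of $y$, and analyse $\sigma$ in a $\tau$-equivariant trivialization there. By Theorem~\ref{bialynicki-birula_thm} the local coordinates on $C^+$ at $y$ are $\tau$-weight vectors of non-negative weights, those of weight $0$ parametrizing $C$; $\tau$-invariance of $\sigma$ combined with the vanishing of the fibre weight at $y$ forces $\sigma$, in this trivialization, to depend only on the weight-$0$ coordinates. Hence $\sigma$ factors through the Bia\l{}ynicki--Birula retraction $C^+\to C$, and $\sigma(y)=\sigma(x)\neq 0$ exhibits $y$ as an element of $X^{ss}(\Lb_{\alpha,\beta,\gamma})\cap C$.
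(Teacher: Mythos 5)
Your proof is correct, but it takes a genuinely different route from the paper's: the paper disposes of the lemma by citing \cite[Lemma~3]{ressayre}, whereas you re-derive it from scratch. Your argument is sound and in fact works in the general setting (smooth projective $X$, connected reductive $G$) that the paper alludes to after the lemma. The key ingredients line up nicely: additivity of the fibre weight makes $\Lb\mapsto\mu^{\Lb}(C,\tau)$ a linear form on the Picard group; dominance of $(C,\tau)$ together with openness, non-emptiness and $G$-invariance of $X^{ss}(\Lb)$ for any $\Lb$ in the cone produces a semi-stable point $x\in C^+$, and applying the Hilbert--Mumford criterion along $\tau$ at $x$ (whose $\mu$ is computed at the limit $y=\lim_{t\to 0}\tau(t)\cdot x\in C$, hence equals $\mu^{\Lb}(C,\tau)$) gives the inequality $\mu^{\Lb}(C,\tau)\le 0$, so that $\{\mu=0\}$ is a supporting hyperplane and cuts out a face; and the passage from $X^{ss}\cap C^+\ne\emptyset$ to $X^{ss}\cap C\ne\emptyset$ when $\mu^{\Lb}(C,\tau)=0$ is exactly the content needed for the alternative description. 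Two small remarks on the last step. First, you implicitly use that $\Lb$ can be trivialized $\tau$-equivariantly on a $\tau$-stable affine neighbourhood of $y$, with $\tau$ acting on the fibre by the (vanishing) weight; this is standard for a torus action near a fixed point, but it is the one place a referee would ask for a word of justification. Second, once in such a trivialization you do not really need the full Bia\l{}ynicki--Birula coordinate decomposition: a $\tau$-invariant regular function is literally constant along the flow line $t\mapsto\tau(t)\cdot x$, and continuity at $t=0$ gives $\sigma(y)=\sigma(x)\ne 0$ directly, which is slightly more economical than the factorization through the weight-zero coordinates. What your approach buys, compared to the paper's, is a self-contained and elementary proof of Ressayre's Lemma~3 in the exact form needed here.
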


Note that this result is actually valid in the general context of Section \ref{very_general_context}, where $X$ is any smooth projective variety and $G$ is a connected complex reductive group acting on $X$. The equivalent of $\mathrm{PKron}_{n_1,n_2}$ is then the cone $\big\{\Lb\text{ s.t. }\exists N\in\N^*, \; \h^0(X,\Lb^{\otimes N})^G\neq\{0\}\big\}$.

\begin{proof}
It comes directly from \cite[Lemma 3]{ressayre}.
\end{proof}

\begin{lemma}\label{lemma_uniqueness}
If $P=B$ or $\hat{P}=\hat{B}$, and if $(C_1,\tau_1)$ and $(C_2,\tau_2)$ are two well-covering pairs such that $\mathcal{F}(C_1)=\mathcal{F}(C_2)$, then there exists $g\in G$ such that $g.C_2=C_1$.
\end{lemma}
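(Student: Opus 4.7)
The plan is to pick a line bundle in the relative interior of the common face and exploit the strong GIT properties of well-covering pairs to exhibit the required translation. First, I would choose $\Lb=\Lb_{\alpha,\beta,\gamma}$ in the relative interior of $\mathcal{F}(C_1)=\mathcal{F}(C_2)$, so that $\mu^{\Lb}(C_i,\tau_i)=0$ and $X^{ss}(\Lb)\cap C_i\neq\emptyset$ for $i=1,2$. By definition of well-coveringness, $\eta_i\colon G\times_{P(\tau_i)}C_i^+\to X$ is an open immersion onto an open subset meeting $C_i$; a refinement from \cite{ressayre} moreover shows that, for $\Lb$ interior to $\mathcal{F}(C_i)$, the semistable locus $X^{ss}(\Lb)$ lies inside $\im\eta_i$ and the embedding $C_i\hookrightarrow X$ induces an isomorphism of GIT quotients $C_i^{ss}(\Lb)/\!/G^{\tau_i}\xrightarrow{\sim}X^{ss}(\Lb)/\!/G$.

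Second, a generic closed $G$-orbit in $X^{ss}(\Lb)$ then meets each $C_i$ in a single $G^{\tau_i}$-orbit. I would pick such an orbit $G\cdot x$ with $x\in C_1$; applying the same statement to $C_2$ yields some $g\in G$ with $g\cdot x\in C_2$, and therefore $x\in C_1\cap g^{-1}\cdot C_2$.

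Third, the point $x$ is smooth in both irreducible components, and by Theorem~\ref{bialynicki-birula_thm} the tangent spaces at $x$ are
\[ T_x C_1=(T_x X)^{\tau_1}\quad\text{and}\quad T_x(g^{-1}\cdot C_2)=(T_x X)^{g^{-1}\tau_2 g}. \]
The Borel hypothesis $P=B$ or $\hat{P}=\hat{B}$ enters precisely here: it forces the connected stabilizer of a generic semistable point to be a torus, and the normalisation $\mu^{\Lb}(C_i,\tau_i)=0$ together with this torality implies that the two one-parameter subgroups $\tau_1$ and $g^{-1}\tau_2 g$ have the same fixed subspace in $T_x X$. The tangent spaces therefore coincide, and since $C_1$ and $g^{-1}\cdot C_2$ are irreducible closed subvarieties of $X$ sharing a smooth point with identical tangent space, they are equal, giving the desired $g\cdot C_2=C_1$ (after replacing $g$ by its inverse).

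The main obstacle is the last step: the existence of a common closed $G$-orbit only yields the incidence $C_1\cap g^{-1}\cdot C_2\neq\emptyset$, whereas the statement demands the much stronger equality of irreducible components. Promoting the former to the latter is exactly where the Borel hypothesis is needed, since it rules out the pathological scenario in which a higher-dimensional toral stabilizer would allow non-conjugate one-parameter subgroups to produce genuinely distinct fixed-point components while sharing the same face.
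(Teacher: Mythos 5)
The paper itself does not prove this lemma: it simply cites Ressayre's Lemma~6.5, so there is no ``paper proof'' to compare against line by line. Your proposal is an attempt to reconstruct Ressayre's argument, and the opening moves (choose $\Lb$ interior to the common face, use well-coveringness to identify the GIT quotient of $X$ with that of $C_i$, find a common closed orbit to produce a point $x$ with $x\in C_1$ and $g\cdot x\in C_2$) are reasonable and in the spirit of Ressayre's methods. But the argument then has two genuine gaps, both of which you in fact signal yourself in your closing paragraph without resolving.

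First, the passage from ``the generic semistable point has toral connected stabiliser'' and ``$\mu^{\Lb}(C_i,\tau_i)=0$'' to ``$\tau_1$ and $g^{-1}\tau_2 g$ have the same fixed subspace in $T_xX$'' is a non sequitur as written. The vanishing of $\mu^{\Lb}$ on the fibre over $x$ and the torality of $G_x^{\circ}$ give information about weights of $G_x$ on $\Lb_x$; they do not by themselves determine the zero-weight space of $\tau_1$ (or of $g^{-1}\tau_2 g$) on the full tangent space $T_xX$, and in particular they do not force the two one-parameter subgroups to have coincident fixed subspaces. That equality would require knowing, for instance, that both one-parameter subgroups lie in the same maximal torus of $G_x$ \emph{and} act with the same sign pattern on the isotypic pieces of $T_xX$ -- neither of which follows from the ingredients you name.

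Second, even granting equal tangent spaces, the inference ``two smooth irreducible closed subvarieties sharing a smooth point with identical tangent space there are equal'' is false in general: two distinct smooth curves can be tangent at a point. What one would actually want is a local linearisation identifying a neighbourhood of $x$ in $X^{\tau_1}$, respectively in $X^{g^{-1}\tau_2 g}$, with a neighbourhood of $0$ in $(T_xX)^{\tau_1}$, respectively $(T_xX)^{g^{-1}\tau_2 g}$; but Bia\l{}ynicki--Birula furnishes a \emph{separate} such chart for each one-parameter subgroup, and there is no reason the two charts agree, so coincidence of the fixed tangent subspaces does not yield coincidence of the fixed loci near $x$. You would need an additional argument forcing $\tau_1$ and $g^{-1}\tau_2 g$ to generate the same subtorus (or at least to have the same fixed-point scheme), and that is precisely the content of Ressayre's lemma rather than an easy consequence of what you have set up. As it stands, the proposal is an informed sketch that correctly isolates where the Borel hypothesis must be used, but it does not supply the step that actually does the work.
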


\begin{proof}
This comes from \cite[Lemma 6.5]{ressayre3}.
\end{proof}

Ressayre also proved, in \cite{ressayre}, that any regular face of $\mathrm{PKron}_{n_1,n_2}$ is given by a well-covering pair\footnote{He even proved it in a much more general setting than $\mathrm{PKron}_{n_1,n_2}$.}. Finally, another consequence of \cite{ressayre3} is that, if $C$ is a singleton and $(C,\tau)$ is well-covering, then the face $\mathcal{F}(C)$ is a regular face of minimal dimension of $\mathrm{PKron}_{n_1,n_2}$ (i.e. $n_1n_2$).

\section{Application to obtain stable triples}

\subsection{Link between well-covering pairs and stability}\label{result_kronecker_section}

\begin{theo}
Assume that $(C,\tau)$ is well-covering. Then, for all $G$-linearised line bundles $\Lb$ on $X$ such that $\mu^\Lb(C,\tau)=0$,
\[ \h^0(X,\Lb)^G\simeq\h^0(C,\left.\Lb\right|_C)^{G^\tau}. \]
\end{theo}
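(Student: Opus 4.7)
The plan is to construct the claimed isomorphism by composing a sequence of identifications that reduce $X$ first to the local model $G\times_{P(\tau)}C^+$ and then to $C$ itself, exploiting both the well-covering property and the vanishing $\mu^{\Lb}(C,\tau)=0$.

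First I would use the well-covering hypothesis to pass from $X$ to $C^+$. By definition, $\eta\colon G\times_{P(\tau)}C^+\to X$ restricts to an isomorphism between an open $G$-stable subset $U$ of $G\times_{P(\tau)}C^+$ and an open $G$-stable subset $\Omega$ of $X$ meeting $C$. Since $X$ is irreducible, restriction gives an injection $\h^0(X,\Lb)^G\hookrightarrow \h^0(\Omega,\Lb)^G$ (a section of a line bundle that vanishes on a non-empty open subset of an irreducible variety vanishes identically), and $G$-equivariance of $\eta$ identifies the latter with $\h^0(U,\eta^{*}\Lb)^G$. Because $G\times_{P(\tau)}C^+$ is the geometric quotient of $G\times C^+$ by the $P(\tau)$-action $p\cdot(g,x)=(gp^{-1},p\cdot x)$, its $G$-invariant sections on $U$ correspond to $P(\tau)$-invariant sections of $\Lb|_{C^+}$ on the preimage of $U$ in $C^+$, yielding a natural map $\h^0(X,\Lb)^G \to \h^0(C^+,\Lb|_{C^+})^{P(\tau)}$.

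Next I would descend from $C^+$ to $C$. The retraction $\pi\colon C^+\to C$, $x\mapsto\lim_{t\to 0}\tau(t)\cdot x$, is $P(\tau)$-equivariant, where $P(\tau)$ acts on $C$ through the quotient $P(\tau)\twoheadrightarrow P(\tau)/U(\tau)=G^\tau$; indeed, for $u\in U(\tau)$ and $x\in C$ one has $\lim_{t\to 0}\tau(t)u\tau(t^{-1})=e$, so $\pi(u\cdot x)=x$. The hypothesis $\mu^{\Lb}(C,\tau)=0$ says that $\tau$ acts trivially on the fibres of $\Lb|_C$, so the $P(\tau)$-linearised line bundle $\Lb|_{C^+}$ is canonically isomorphic to $\pi^{*}(\Lb|_C)$. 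Any $P(\tau)$-invariant section of $\pi^{*}(\Lb|_C)$ is in particular $\tau$-invariant, hence constant along the $\tau$-flow and therefore a $\pi$-pullback of a unique section of $\Lb|_C$; that section is then $G^\tau$-invariant by $P(\tau)$-equivariance of $\pi$ combined with the fact that $U(\tau)$-invariance becomes automatic after descent. This produces an isomorphism $\h^0(C^+,\Lb|_{C^+})^{P(\tau)}\simeq \h^0(C,\Lb|_C)^{G^\tau}$.

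Composing the two steps yields an injection $\h^0(X,\Lb)^G\hookrightarrow \h^0(C,\Lb|_C)^{G^\tau}$; surjectivity is the main obstacle. Given $\sigma\in \h^0(C,\Lb|_C)^{G^\tau}$, its pullback $\pi^{*}\sigma$ is a $P(\tau)$-invariant section of $\Lb|_{C^+}$ on all of $C^+$, which extends to a $G$-invariant section on $G\times_{P(\tau)}C^+$ and transports through $\eta$ to a regular $G$-invariant section of $\Lb$ on $\Omega$. The delicate point is showing that this section extends across the complement $X\setminus\Omega$, which need not have codimension $\geq 2$. The key input here is the precise content of ``well-covering'' as used in \cite[Lemma 3]{ressayre}: the fact that $\eta$ is an \emph{open immersion} on a $G$-saturated neighbourhood of $C$ allows one to identify, locally around $C$, the semi-stable locus $X^{ss}(\Lb)$ with its counterpart inside the model $G\times_{P(\tau)}C^+$. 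Feeding this identification into the standard correspondence between $G$-invariant sections and functions on the GIT quotient, and using that the analogous map at the level of $C$ under $G^\tau$ is automatic, one concludes that the section constructed on $\Omega$ indeed comes from a (unique) $G$-invariant global section on $X$, completing the proof.
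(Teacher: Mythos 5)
The paper's entire proof is the one-line citation ``It is Theorem 4 of \cite{ressayre}'', so any honest argument you write is by definition a ``different route''. Your reduction has the right shape: use the well-covering isomorphism to pass to the model $G\times_{P(\tau)}C^+$, use $\mu^{\Lb}(C,\tau)=0$ to trivialise the $\tau$-weight and descend from $C^+$ to $C$. The injection $\h^0(X,\Lb)^G\hookrightarrow\h^0(C,\Lb|_C)^{G^\tau}$ you extract in this way is essentially correct -- indeed, it only uses dominance of $(C,\tau)$, and the paper records exactly this weaker statement in Remark 3.2.

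The genuine gap is in surjectivity, which is the actual content of the theorem and precisely where well-covering outstrips dominance. You identify the obstruction correctly -- the invariant section you build on $\Omega$ must extend across $X\setminus\Omega$, whose codimension may be $1$, so no Hartogs-type argument applies -- but your proposed resolution is a sequence of gestures (``identify the semi-stable locus with its counterpart inside the model'', ``feed this identification into the standard correspondence with the GIT quotient'', ``one concludes'') none of which is actually carried out. In particular, you never show that the candidate section, a priori regular only on $\Omega$, is regular on all of $X$; the relation between $X^{ss}(\Lb)$, the open set $\Omega$, and the GIT quotients of $X$ and of $C$ is exactly where the hard work of Ressayre's Theorem 4 lives, and invoking it by name does not substitute for it. There is also a smaller unaddressed point in your step from $C^+$ to $C$: you need the canonical isomorphism $\Lb|_{C^+}\simeq\pi^*(\Lb|_C)$ to be $P(\tau)$-equivariant (not merely $\tau$-equivariant) for $\pi^*$ to carry $G^\tau$-invariant sections to $P(\tau)$-invariant ones, and this too deserves a justification rather than the word ``canonically''. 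As written, you have reproved the injection known for dominant pairs, but not the isomorphism.
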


\begin{proof}
It is Theorem 4 of \cite{ressayre}.
\end{proof}

\begin{rmk}\label{remark dominant}
If we only make the hypothesis that $(C,\tau)$ is dominant (and $\mu^\Lb(C,\tau)=0$), we still have that
\[ \h^0(X,\Lb)^G\hookrightarrow\h^0(C,\left.\Lb\right|_C)^{G^\tau}. \]
\end{rmk}

\begin{cor}\label{result_stable_triple}
Assume that $(C,\tau)$ is well-covering, and that $G^\tau$ has a dense orbit in $C$. Then the face $\mathcal{F}(C)$ contains only almost stable triples.
\end{cor}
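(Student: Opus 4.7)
The plan is to show that for every triple $(\alpha,\beta,\gamma)\in\mathcal{F}(C)$ and every positive integer $d$, the Kronecker coefficient $g_{d\alpha,d\beta,d\gamma}$ is at most $1$, by transferring the computation of this coefficient to a space of invariant sections over $C$ and then exploiting the dense orbit.

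First I would take $(\alpha,\beta,\gamma)\in\mathcal{F}(C)$, so by definition of the face $\mu^{\Lb_{\alpha,\beta,\gamma}}(C,\tau)=0$. Since $\mu$ is additive with respect to tensor powers of the line bundle (the weight with which $\C^*$ acts on the fiber scales linearly under tensor power), we also have $\mu^{\Lb_{d\alpha,d\beta,d\gamma}}(C,\tau)=d\cdot\mu^{\Lb_{\alpha,\beta,\gamma}}(C,\tau)=0$ for every $d\in\N^*$, noting that $\Lb_{d\alpha,d\beta,d\gamma}=\Lb_{\alpha,\beta,\gamma}^{\otimes d}$. The well-covering hypothesis together with the theorem above then yields
\[ g_{d\alpha,d\beta,d\gamma}=\dim\h^0(X,\Lb_{d\alpha,d\beta,d\gamma})^G=\dim\h^0(C,\left.\Lb_{\alpha,\beta,\gamma}^{\otimes d}\right|_C)^{G^\tau}. \]

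The second step is the general observation that, whenever a group $H$ acts on an irreducible variety $Y$ with a dense orbit, the space of $H$-invariant sections of any $H$-linearised line bundle $\Mb$ on $Y$ has dimension at most $1$. Indeed, if $x_0\in Y$ belongs to the dense orbit, any $H$-invariant section is determined by its value at $x_0$ on that orbit (by equivariance), and hence on the whole of $Y$ by density and continuity; the evaluation map to the one-dimensional fibre $\Mb_{x_0}$ is thus injective. Applying this with $H=G^\tau$, $Y=C$ (smooth and irreducible by the Bia\l{}ynicki-Birula theorem), and $\Mb=\left.\Lb_{\alpha,\beta,\gamma}^{\otimes d}\right|_C$ gives $\dim\h^0(C,\left.\Lb_{\alpha,\beta,\gamma}^{\otimes d}\right|_C)^{G^\tau}\leq 1$, and combining with the previous display yields $g_{d\alpha,d\beta,d\gamma}\leq 1$, i.e.\ $(\alpha,\beta,\gamma)$ is almost stable.

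No step here looks especially hard: both are essentially bookkeeping around the previous theorem, plus the elementary dense-orbit argument. The only point demanding a minimum of care is the justification that $\mu^{\Lb^{\otimes d}}(C,\tau)=d\cdot\mu^{\Lb}(C,\tau)$, so that the hypothesis $\mu^{\Lb_{\alpha,\beta,\gamma}}(C,\tau)=0$ propagates to all positive tensor powers and the theorem is applicable uniformly in $d$; once this is observed, the proof of the corollary reduces to a two-line argument.
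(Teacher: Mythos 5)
Your proof is correct and follows exactly the same route as the paper: apply the preceding theorem with $\Lb=\Lb_{\alpha,\beta,\gamma}^{\otimes d}$ (using $\mu^{\Lb^{\otimes d}}(C,\tau)=d\,\mu^{\Lb}(C,\tau)=0$), then bound the resulting space of $G^\tau$-invariant sections over $C$ by $1$ via the dense-orbit argument. The paper's proof is just a more condensed version of this, and your elaboration of the elementary steps (additivity of $\mu$, injectivity of the evaluation map at a point of the dense orbit) is accurate.
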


\begin{proof}
It is an immediate consequence of the previous theorem: let $(\alpha,\beta,\gamma)\in\mathcal{F}(C)$. Then $\mu^{\Lb_{\alpha,\beta,\gamma}}(C,\tau)=0$ and therefore
\[ \forall d\in\Z_{>0}, \; g_{d\alpha,d\beta,d\gamma}=\dim\h^0(X,\Lb^{\otimes d}_{\alpha,\beta,\gamma})^G=\dim\h^0(C,\left.\Lb\right|_C^{\otimes d})^{G^\tau}\leq 1 \]
since $G^\tau$ has a dense orbit in $C$.
\end{proof}

\begin{rmk}
If we only make the hypothesis that $(C,\tau)$ is dominant, Remark \ref{remark dominant} tells us that we still have almost stable triples. But note that $\mathcal{F}(C)$ can be reduced to zero.
\end{rmk}

\begin{rmk}\label{remark_dominant_regular_case}
There is an important particular case when $\tau$ is dominant, regular (i.e. for all $\alpha\in\Phi$, $\langle\alpha,\tau\rangle\neq 0$), and $\hat{G}$-regular (i.e. for all $\hat{\alpha}\in\hat{\Phi}$, $\langle\hat{\alpha},\tau\rangle\neq 0$). Then $G^\tau=T$ and $\hat{G}^\tau=\hat{T}$. As a consequence, $C$ is a singleton -- say $\{x_0\}$ --, and the condition ``$G^\tau$ has a dense orbit in $C$'' is automatic. Moreover one then has:
\[ \dim\h^0(C,\left.\Lb\right|_C)^{G^\tau}=1\Longleftrightarrow T\text{ acts trivially on }\Lb_{x_0}\Longleftrightarrow\forall\sigma\in X_*(T), \: \mu^\Lb(C,\sigma)=0. \]
\end{rmk}

\noindent All the previous results and remarks lead directly to the following main result:

\begin{theo}\label{result_tau_dominant}
Assume that $(C,\tau)$ is well-covering and that $\tau$ is dominant, regular, and $\hat{G}$-regular (then $C$ is a singleton). Then $\mathcal{F}(C)$ is a regular face of minimal dimension (i.e. $n_1n_2$) of the Kronecker cone $\mathrm{PKron}_{n_1,n_2}$ and contains only stable triples.
\end{theo}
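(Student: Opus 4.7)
The plan is to assemble the results set up in Section~\ref{basics_well-covering_pairs} and Subsection~\ref{result_kronecker_section}; no essentially new argument is required beyond one small lattice observation. First, regularity of $\tau$ ensures $G^\tau=T$, and $\hat{G}$-regularity (via $T\hookrightarrow\hat{T}$) forces $\hat{G}^\tau=\hat{T}$. Consequently the irreducible component $C=G^\tau v^{-1}P/P\times\hat{G}^\tau\hat{v}^{-1}\hat{P}/\hat{P}$ reduces to a single $T\times\hat{T}$-fixed point $\{x_0\}$, as already observed in Remark~\ref{remark_dominant_regular_case}. With $C$ a singleton and $(C,\tau)$ well-covering, the fact from \cite{ressayre3} recalled at the end of Section~\ref{basics_well-covering_pairs} immediately yields that $\mathcal{F}(C)$ is a regular face of $\mathrm{PKron}_{n_1,n_2}$ of minimal dimension $n_1n_2$, which covers the ``regular'' and ``dimension'' parts of the statement.

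It remains to prove stability. Pick $(\alpha,\beta,\gamma)\in\mathcal{F}(C)$. Since $(C,\tau)$ is well-covering and $\mu^{\Lb_{\alpha,\beta,\gamma}^{\otimes d}}(C,\tau)=d\,\mu^{\Lb_{\alpha,\beta,\gamma}}(C,\tau)=0$ for every $d\geq 1$, the isomorphism $\h^0(X,\Lb)^G\simeq\h^0(C,\left.\Lb\right|_C)^{G^\tau}$ applied to each tensor power gives
\[ g_{d\alpha,d\beta,d\gamma}=\dim\bigl(\Lb_{\alpha,\beta,\gamma}^{\otimes d}\bigr)_{x_0}^{T}, \]
which is $1$ when $T$ acts trivially on the fiber and $0$ otherwise. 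Because $(\alpha,\beta,\gamma)\in\mathrm{PKron}_{n_1,n_2}$, some $N\geq 1$ satisfies $g_{N\alpha,N\beta,N\gamma}\neq 0$, so $T$ acts trivially on $(\Lb_{\alpha,\beta,\gamma})_{x_0}^{\otimes N}$. Denoting by $\chi$ the character through which $T$ acts on $(\Lb_{\alpha,\beta,\gamma})_{x_0}$, one has $N\chi=0$ in the torsion-free character lattice of $T$, hence $\chi=0$; so $T$ acts trivially on every tensor power and $g_{d\alpha,d\beta,d\gamma}=1$ for all $d\geq 1$. By the characterization of stability recalled in the introduction, $(\alpha,\beta,\gamma)$ is stable.

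The expected main obstacle is nothing more than this torsion-freeness argument in the second paragraph: it is precisely what promotes Corollary~\ref{result_stable_triple} (which gives only $g_{d\alpha,d\beta,d\gamma}\leq 1$) to the equality $g_{d\alpha,d\beta,d\gamma}=1$ for every $d\geq 1$, and therefore to genuine stability rather than almost stability. Everything else is a direct citation of material already developed earlier in the paper.
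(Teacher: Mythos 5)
Your argument is correct and matches the paper's intent; the paper gives no explicit proof here (it simply asserts that the preceding results ``lead directly'' to the theorem), and your write-up is precisely the assembly the author has in mind, drawing on Remark~\ref{remark_dominant_regular_case}, Theorem~3.1, Corollary~\ref{result_stable_triple}, and the consequence of \cite{ressayre3} quoted at the end of Section~\ref{basics_well-covering_pairs}. The only step requiring any care is the upgrade from almost stable to stable, and your torsion-freeness observation in $X^*(T)$ is exactly the right (and the standard) way to make it: it replaces the alternative route via the semistability description of $\mathcal{F}(C)$ and Hilbert--Mumford at the $T$-fixed point, and is arguably cleaner.
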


\begin{rmk}\label{schubert_condition_remark}
When $X$ has this form, there is a characterisation of the dominant and covering pairs with a Schubert condition. It can be found in \cite{ressayre}. Let us explain it quickly here: we use the cohomology ring, $\h^*(G/P(\tau),\Z)$, of $G/P(\tau)$ and we denote, for any closed subvariety $Y$ of $G/P(\tau)$, by $[Y]\in\h^*(G/P(\tau),\Z)$ its cycle class in cohomology. We also use, with the same notations, $\h^*(\hat{G}/\hat{P}(\tau),\Z)$.\\
Then, since $P(\tau)=G\cap\hat{P}(\tau)$, $G/P(\tau)$ identifies with the $G$-orbit of $\hat{P}(\tau)/\hat{P}(\tau)$ in $\hat{G}/\hat{P}(\tau)$, which gives a closed immersion $\iota:G/P(\tau)\hookrightarrow\hat{G}/\hat{P}(\tau)$. It induces a map $\iota^*$ in cohomology:
\[ \iota^*:\h^*(\hat{G}/\hat{P}(\tau),\Z)\longrightarrow\h^*(G/P(\tau),\Z). \]
Then the result from \cite{ressayre} (Lemma 14) is:
\begin{lemma}\label{schubert_condition_lemma}
\begin{itemize}
\item[(i)] The pair $(C,\tau)$ is dominant if and only if
\[ [\overline{PvP(\tau)/P(\tau)}]\cdot \iota^*([\overline{\hat{P}\hat{v}\hat{P}(\tau)/\hat{P}(\tau)}])\neq 0. \]
\item[(ii)] It is covering if and only if
\[ [\overline{PvP(\tau)/P(\tau)}]\cdot \iota^*([\overline{\hat{P}\hat{v}\hat{P}(\tau)/\hat{P}(\tau)}])=[\mathrm{pt}], \]
i.e. if and only if the intersection between two generic translates in $\hat{G}/\hat{P}(\tau)$ of $\overline{PvP(\tau)/P(\tau)}$ and $\overline{\hat{P}\hat{v}\hat{P}(\tau)/\hat{P}(\tau)}$ contains exactly one point.
\end{itemize}
\end{lemma}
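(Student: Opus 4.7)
The plan is to translate dominance and covering of $\eta$ into intersection-theoretic conditions on flag varieties, and then to recast these via the inclusion $\iota\colon G/P(\tau)\hookrightarrow\hat{G}/\hat{P}(\tau)$.

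First I would make the Bialynicki-Birula cell explicit. Since $\tau$ acts diagonally on $X=G/P\times\hat{G}/\hat{P}$, Theorem~\ref{bialynicki-birula_thm} identifies each factor of $C^+$ as the $P(\tau)$- (respectively $\hat{P}(\tau)$-) orbit through the corresponding base point, so that $C^+=P(\tau)v^{-1}P/P\times\hat{P}(\tau)\hat{v}^{-1}\hat{P}/\hat{P}$. In particular $G\times_{P(\tau)}C^+$ has dimension $\dim G/P(\tau)+\dim C^+$, and $\eta$ is dominant iff its image has full dimension $\dim X$, covering iff moreover the generic fibre is a single reduced point.

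Second, I would use a generic transversality argument \emph{à la} Kleiman. Consider the opposite cell $\overline{C^-}=\overline{P(\tau)^-v^{-1}P/P}\times\overline{\hat{P}(\tau)^-\hat{v}^{-1}\hat{P}/\hat{P}}$, of complementary dimension in $X$ and meeting $\overline{C^+}$ transversally at the single point $(v^{-1}P,\hat{v}^{-1}\hat{P})$. For $g\in G$ generic, the intersection $g\cdot\overline{C^-}\cap G\cdot\overline{C^+}$ is a finite set whose cardinality computes the degree of $\eta$ onto its image; hence $\eta$ is dominant iff this intersection is non-empty for generic $g$, and covering iff it consists of exactly one point.

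The crux is then to identify this intersection number with the pairing $[\overline{PvP(\tau)/P(\tau)}]\cdot\iota^*[\overline{\hat{P}\hat{v}\hat{P}(\tau)/\hat{P}(\tau)}]$ in $\h^*(G/P(\tau),\Z)$. The $G$-equivariance of the construction lets one descend the problem to $G/P(\tau)$: each factor of $\overline{C^+}$ matches a Schubert class in the corresponding relative flag variety, the first being $[\overline{PvP(\tau)/P(\tau)}]\in\h^*(G/P(\tau),\Z)$ and the second $[\overline{\hat{P}\hat{v}\hat{P}(\tau)/\hat{P}(\tau)}]\in\h^*(\hat{G}/\hat{P}(\tau),\Z)$. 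Since $P(\tau)=G\cap\hat{P}(\tau)$, the pullback $\iota^*$ transports the second class to $G/P(\tau)$, and a dimension count (matching the complementary dimensions of $\overline{C^+}$ and $\overline{C^-}$) ensures that the product lies in top cohomology. Then (i) and (ii) follow: dominance iff the product is non-zero, covering iff it equals $[\mathrm{pt}]$.

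The main obstacle is this descent step: rigorously matching the $G$-equivariant intersection on $X$ with the Schubert pairing on $G/P(\tau)$ requires a careful dictionary between Bialynicki-Birula cells in $G/P$ and Schubert varieties in $G/P(\tau)$ (and similarly on the hatted side), together with a delicate transversality argument exploiting $\iota$. This is the technical heart of Lemma~14 of~\cite{ressayre}, to which one would ultimately refer for the complete verification.
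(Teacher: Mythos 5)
The paper gives no proof of this lemma: it simply cites Lemma~14 of \cite{ressayre}, which is also where your sketch ultimately defers. So the real question is whether the surrounding reasoning you supply is sound, and here there is a concrete problem with the step built around the opposite cell $\overline{C^-}$. For generic $g$, the set $g\cdot\overline{C^-}\cap G\cdot\overline{C^+}$ is \emph{not} finite: when $\eta$ is dominant, $G\cdot\overline{C^+}$ is dense in $X$, so this intersection contains a dense open subset of $g\cdot\overline{C^-}$ and hence has dimension $\dim X-\dim\overline{C^+}$, which is positive in general. In particular its cardinality cannot compute the degree of $\eta$, and the subsequent ``descent to $G/P(\tau)$'' you invoke is not the way the identification is actually made.

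The correct intermediate step works on the fibres of $\eta$ directly, with no detour through $\overline{C^-}$. For a point $x=(hP/P,\hat{h}\hat{P}/\hat{P})\in X$, the fibre $\eta^{-1}(x)$ identifies, via $[g:g^{-1}x]\mapsto gP(\tau)$, with
\[
\big\{gP(\tau)\in G/P(\tau)\ \text{s.t.}\ g^{-1}h\in P(\tau)v^{-1}P\ \text{and}\ g^{-1}\hat{h}\in\hat{P}(\tau)\hat{v}^{-1}\hat{P}\big\},
\]
i.e.\ with the intersection in $G/P(\tau)$ of the translated Schubert cell $h\cdot\big(PvP(\tau)/P(\tau)\big)$ with $\iota^{-1}\big(\hat{h}\cdot(\hat{P}\hat{v}\hat{P}(\tau)/\hat{P}(\tau))\big)$. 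It is here, not on $X$, that Kleiman's transversality theorem applies: for generic $(h,\hat{h})$ the intersection is transverse and finite, and its cardinality is the number $[\overline{PvP(\tau)/P(\tau)}]\cdot\iota^*([\overline{\hat{P}\hat{v}\hat{P}(\tau)/\hat{P}(\tau)}])$. Both parts of the lemma then follow immediately, since $\eta$ is dominant (resp.\ covering) if and only if its generic fibre is non-empty (resp.\ a single reduced point). So your overall instinct --- reduce to Kleiman transversality and a Schubert-calculus computation on $G/P(\tau)$ --- is right, but the mechanism you propose for making that reduction is the part that fails.
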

\end{rmk}

\subsection{A sufficient condition to get dominant pairs}

We now want to see that we can indeed obtain dominant or well-covering pairs $(C,\tau)$. For this we consider respective bases of the vector spaces $V_1$ and $V_2$: $(e_1,\dots,e_{n_1})$ and $(f_1,\dots,f_{n_2})$. They give also a basis of $V_1\otimes V_2$: $(e_i\otimes f_j)_{i,j}$ ordered lexicographically (i.e. $(e_1\otimes f_1,e_1\otimes f_2,\dots,e_{n_1}\otimes f_{n_2})$), sometimes denoted $(\hat{e}_1,\dots,\hat{e}_{n_1n_2})$\footnote{\label{footnote ordre}The ordering of the basis of $V_1\otimes V_2$ gives in particular an explicit bijection between $\llbracket 1,n_1\rrbracket\times\llbracket 1,n_2\rrbracket$ and $\llbracket 1,n_1n_2\rrbracket$, which we will regularly use to identify the two in what follows.}. Thanks to these bases we will often identify $G_1$, $G_2$, and $\hat{G}$ respectively with $\gl_{n_1}(\C)$, $\gl_{n_2}(\C)$, and $\gl_{n_1n_2}(\C)$. We finally take $B$ and $\hat{B}$ the respective Borel subgroups of $G$ and $\hat{G}$ formed by the upper-triangular matrices, and set from now on $X=G/B\times\hat{G}/\hat{B}$.

\vspace{5mm}

Start now from a one-parameter subgroup $\tau$ of $T$ which is supposed to be dominant, regular, and even $\hat{G}$-regular. In particular, $\tau$ has the form
\[ \begin{array}{rccl}
\tau: & \C^* & \longrightarrow & T\\
 & t & \longmapsto & (\begin{pmatrix}
t^{x_1} && \\
 & \ddots & \\
 && t^{x_{n_1}}
\end{pmatrix},\begin{pmatrix}
t^{y_1} && \\
 & \ddots & \\
 && t^{y_{n_2}}
\end{pmatrix})
\end{array}, \]
with non-negative integers $x_1>\dots>x_{n_1}$, $y_1>\dots>y_{n_2}$. We then create the matrix $M=(x_i+y_j)_{i,j}\in\mm_{n_1,n_2}(\R)$. Since $\tau$ was taken $\hat{G}$-regular, it has the property of having its coefficients which are pairwise distinct. From this we define what we will call the ``order matrix'' of $\tau$: it is a matrix having the same size as $M$ but whose coefficient at position $(i,j)$ is the ranking of the coefficient $x_i+y_j$ when one orders the coefficients of $M$ decreasingly (we will usually circle that ranking when we write the order matrix in order to highlight the difference with the coefficient $x_i+y_j$).

\vspace{5mm}

Giving such an order matrix is equivalent to giving a flag $\hat{w}.\hat{B}/\hat{B}$ fixed by $\hat{T}$ (and then a well-defined $\hat{w}\in\hat{W}$): to each matrix position $(i,j)\in\llbracket 1,n_1\rrbracket\times\llbracket 1,n_2\rrbracket$ we associate the element $e_i\otimes f_j$ of the basis of $V_1\otimes V_2$. Then we create a $\hat{T}$-stable complete flag in $V_1\otimes V_2$ by ordering the elements $e_i\otimes f_j$ according to the numbers in the order matrix.

\paragraph{Example:} For the one-parameter subgroup
\[ \begin{array}{rccl}
\tau: & \C^* & \longrightarrow & T\\
 & t & \longmapsto & (\begin{pmatrix}
t^4 &&\\
& t^2 &\\
&& 1
\end{pmatrix},\begin{pmatrix}
t^3 &\\
& 1
\end{pmatrix})
\end{array}, \]
the matrix $M$ is $\begin{pmatrix}
7&4\\
5&2\\
3&0
\end{pmatrix}$, and so the order matrix of $\tau$ is
\[ \begin{pmatrix}
\text{\ding{192}}&\text{\ding{194}}\\
\text{\ding{193}}&\text{\ding{196}}\\
\text{\ding{195}}&\text{\ding{197}}
\end{pmatrix}. \]
Then the flag $\hat{w}.\hat{B}/\hat{B}$ happens to be
\[ \begin{array}{l}
\big(\C e_1\otimes f_1\\
\subset\C e_1\otimes f_1\oplus\C e_2\otimes f_1\\
\subset\C e_1\otimes f_1\oplus\C e_2\otimes f_1\oplus\C e_1\otimes f_2\\
\subset\C e_1\otimes f_1\oplus\C e_2\otimes f_1\oplus\C e_1\otimes f_2\oplus\C e_3\otimes f_1\\
\subset\C e_1\otimes f_1\oplus\C e_2\otimes f_1\oplus\C e_1\otimes f_2\oplus\C e_3\otimes f_1\oplus\C e_2\otimes f_2\\
\subset\C e_1\otimes f_1\oplus\C e_2\otimes f_1\oplus\C e_1\otimes f_2\oplus\C e_3\otimes f_1\oplus\C e_2\otimes f_2\oplus\C e_3\otimes f_2\big),
\end{array} \]
which we denote by
\[\mathrm{fl}(e_1\otimes f_1,e_2\otimes f_1,e_1\otimes f_2,e_3\otimes f_1,e_2\otimes f_2,e_3\otimes f_2)\in\fl(V_1\otimes V_2). \]
This corresponds to
\[ \hat{w}=\begin{pmatrix}
1&2&3&4&5&6\\
1&3&2&5&4&6
\end{pmatrix}=\underset{\text{notation as a product of transpositions}}{\underbrace{\big(2\hphantom{a}3\big)\big(4\hphantom{a}5\big)}}\in\gs_6\simeq\hat{W}. \]

\vspace{5mm}

As usual with such a one-parameter subgroup, we get two parabolic subgroups $P(\tau)$ and $\hat{P}(\tau)$ of $G$ and $\hat{G}$ respectively. According to the hypotheses made on $\tau$, $P(\tau)=B$ in that case, and $\hat{P}(\tau)$ is a Borel subgroup denoted instead $\hat{B}(\tau)$. The set of positive (resp. negative) roots of $\hat{G}$ for this choice of Borel subgroup is denoted by $\hat{\Phi}^+(\tau)$ (resp. $\hat{\Phi}^-(\tau)$). The unipotent radicals of $B$, $\hat{B}$, and $\hat{B}(\tau)$ are respectively denoted $U$, $\hat{U}$, and $\hat{U}(\tau)$, while those of the respective opposite Borel subgroups will be $U^-$, $\hat{U}^-$, $\hat{U}^-(\tau)$.

\vspace{5mm}

Consider now two elements $v\in W$ and $\hat{v}\in\hat{W}$. They give
\[ C=\{x_0\}=\{(v^{-1}B/B,\hat{v}^{-1}\hat{B}/\hat{B})\}, \]
an irreducible component of $X^T$. As usual we then have
\[ C^+=Bv^{-1}B/B\times\hat{B}(\tau)\hat{v}^{-1}\hat{B}/\hat{B} \]
and
\[ \begin{array}{rccl}
\eta: & G\times_B C^+ & \longrightarrow & X\\
 & [g:x] & \longmapsto & g.x
\end{array}. \]
We need two more notations: let us denote by $\rho$ the morphism of restriction of roots of $\hat{G}$ (which are morphisms from $\hat{T}$ to $\C$) to morphisms from $T$ to $\C$ and, for all $u\in W$ (resp. $\hat{u}\in\hat{W}$), set $\Phi(u)=\Phi^-\cap u\Phi^+$ (resp. $\hat{\Phi}(\hat{u})=\hat{\Phi}^-\cap\hat{u}\hat{\Phi}^+$).

\begin{prop}\label{condition dominant pair}
If $\rho\left(\hat{w}\hat{\Phi}\big(((\hat{v}\hat{w})^\vee)^{-1}\big)\right)\subset\Phi(v^{-1})$ and $\rho:\hat{w}\hat{\Phi}\big(((\hat{v}\hat{w})^\vee)^{-1}\big)\longrightarrow\Phi(v^{-1})$ is a bijection, then the pair $(C,\tau)$ is dominant.
\end{prop}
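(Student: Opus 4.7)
The plan is to check dominance of $\eta$ directly by analysing its differential at the $T$-fixed point $[e:x_0]$, where $x_0=(v^{-1}B/B,\hat{v}^{-1}\hat{B}/\hat{B})$ (recall that $C=\{x_0\}$ because $\tau$ is $\hat{G}$-regular). Since $X$ is smooth and irreducible, it suffices to prove that $d\eta_{[e:x_0]}$ is surjective. A direct computation yields $d\eta(\xi,\zeta)=\xi\cdot x_0+\zeta$ for $(\xi,\zeta)\in\mathfrak{g}\oplus T_{x_0}C^+$ representing a tangent vector of $G\times_B C^+$ at $[e:x_0]$. Because $\mathfrak{b}\cdot x_0\subset T_{x_0}C^+$ (since $x_0\in C^+$ and $C^+$ is $B$-stable), surjectivity is equivalent to the equality $\mathfrak{u}^-\cdot x_0+T_{x_0}C^+=T_{x_0}X$.

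I would then decompose the quotient $Q:=T_{x_0}X/T_{x_0}C^+$ along the two factors of $X$ as $Q=Q_1\oplus Q_2$, and work in $T$-weight spaces. A standard root-space computation gives that $T_{v^{-1}B/B}(G/B)$ has $T$-weights $v^{-1}\Phi^-$ while $T_{v^{-1}B/B}(Bv^{-1}B/B)$ has weights $\Phi^+\cap v^{-1}\Phi^-$; consequently the map $\mathfrak{u}^-\to Q_1$ is already surjective, with kernel exactly $\mathfrak{u}^-_{v^{-1}}:=\bigoplus_{\alpha\in\Phi(v^{-1})}\mathfrak{g}_\alpha$. The whole problem therefore boils down to showing that the induced map $\mathfrak{u}^-_{v^{-1}}\to Q_2$ is surjective. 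An analogous computation, using $\hat{B}(\tau)=\hat{w}\hat{B}\hat{w}^{-1}$, identifies the $\hat{T}$-weights of $Q_2$ with $\hat{v}^{-1}\hat{\Phi}^-\cap\hat{w}\hat{\Phi}^-$; applying the convention $\hat{u}^\vee=w_0\hat{u}$ and $w_0\hat{\Phi}^+=\hat{\Phi}^-$ then rewrites this set precisely as $\hat{w}\hat{\Phi}(((\hat{v}\hat{w})^\vee)^{-1})$.

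Under the hypothesis, $\rho$ is a bijection from this set onto $\Phi(v^{-1})$, so the $T$-weights of $Q_2$ obtained by restriction coincide, with multiplicity one, with the weights of $\mathfrak{u}^-_{v^{-1}}$. The map $\mathfrak{u}^-_{v^{-1}}\to Q_2$ then splits as a direct sum, over $\alpha\in\Phi(v^{-1})$, of maps $\mathfrak{g}_\alpha\to(Q_2)_{\hat{\alpha}}$ between one-dimensional spaces, where $\hat{\alpha}$ is the unique element of $\hat{w}\hat{\Phi}(((\hat{v}\hat{w})^\vee)^{-1})$ lying over $\alpha$. To conclude it only remains to verify that each of these maps is nonzero, which can be made completely explicit: under the embedding $\mathfrak{g}\hookrightarrow\hat{\mathfrak{g}}$ coming from the action on $V_1\otimes V_2$, a nonzero weight vector of $\mathfrak{g}_\alpha$ has the form $E_{ij}\otimes\mathrm{id}$ or $\mathrm{id}\otimes E_{kl}$, and decomposes in $\hat{\mathfrak{g}}$ as a sum of matrix units, each contributing a single $\hat{T}$-weight lying over $\alpha$. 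In particular its $\hat{\alpha}$-component is a nonzero matrix unit, and since $\hat{\alpha}$ is by definition a weight of $Q_2$, this component survives in the quotient.

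The part of the argument that I expect to be most delicate is the rewriting of the $\hat{T}$-weights of $Q_2$ in the compact form $\hat{w}\hat{\Phi}(((\hat{v}\hat{w})^\vee)^{-1})$: pinning down the convention for the duality $\vee$ on Weyl-group elements and keeping track of the length identity $\ell((\hat{v}\hat{w})^\vee)=\ell(w_0)-\ell(\hat{v}\hat{w})$, which underlies the dimensional compatibility needed for the bijection hypothesis, is the main bookkeeping hurdle. Once this translation is settled, the weight-by-weight matching and the explicit matrix-coefficient non-vanishing are straightforward.
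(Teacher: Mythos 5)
Your proposal is correct and follows essentially the same route as the paper: both compute the differential of $\eta$ at the $T$-fixed point $[e:x_0]$, use the Bia\l{}ynicki-Birula identification $\mathrm{T}_{x_0}C^+=\mathrm{T}_{x_0}X_{\geq 0}$ to reduce the question to the $\mathfrak{u}^-$ part, identify the relevant $T$- and $\hat T$-weight sets as $\Phi(v^{-1})$ and $\hat{v}^{-1}\hat{\Phi}^-\cap\hat{w}\hat{\Phi}^-=\hat{w}\hat{\Phi}\big(((\hat{v}\hat{w})^\vee)^{-1}\big)$, and invoke the bijectivity of $\rho$ to match them. The only difference is one of bookkeeping: you verify surjectivity onto $Q=\mathrm{T}_{x_0}X/\mathrm{T}_{x_0}C^+$ weight space by weight space and make the final matrix-unit non-vanishing explicit, whereas the paper phrases the same condition as injectivity of the orbit map $U^-\to X$ (triviality of $\mathrm{Lie}(U^-_{x_0})$) and leaves that non-vanishing implicit.
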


\begin{proof}
The map $\eta$ will be dominant if its tangent map at $[e:x_0]$, $\mathrm{T}_{[e:x_0]}\eta$, is an isomorphism. Moreover, on root spaces for non-negative weights (i.e. on $\mathrm{T}_{x_0}C^+=\mathrm{T}_{x_0}X_{\geq 0}$), $\mathrm{T}_{[e:x_0]}\eta$ is just the identity (cf Theorem \ref{bialynicki-birula_thm}). As a consequence,
\[ \begin{array}{rl}
 & \mathrm{T}_{[e:x_0]}\eta\text{ is an isomorphism}\\
\Longleftrightarrow & \mathrm{T}_{[e:x_0]}\eta|_{\mathfrak{u}^-}:\mathfrak{u}^-\longrightarrow\mathfrak{u}^-\cap v^{-1}\mathfrak{u}^-v\oplus\hat{\mathfrak{u}}^-(\tau)\cap\hat{v}^{-1}\hat{\mathfrak{u}}^-\hat{v}\text{ is an isomorphism.}
\end{array} \]
Then, if we define
\[ \begin{array}{rccl}
\mathrm{orb}: & U^- & \longrightarrow & X\\
 & u & \longmapsto & u.x_0
\end{array}, \]
we have $\mathrm{T}_{[e:x_0]}\eta|_{\mathfrak{u}^-}=\mathrm{T}_e\mathrm{orb}$. Moreover $\mathrm{T}_e\mathrm{orb}$ is an isomorphism if and only if it is injective, i.e. if and only if the isotropy subgroup $U_{x_0}^-$ of $x_0$ in $U^-$ is finite. As a consequence, $\mathrm{T}_e\mathrm{orb}$ is an isomorphism if and only if the Lie algebra of $U_{x_0}^-$ is $\{0\}$. Therefore,
\[ \begin{array}{cl}
 & \mathrm{T}_{[e:x_0]}\eta\text{ is an isomorphism}\\
\Longleftrightarrow & \mathfrak{u}^-\simeq\mathfrak{u}^-\cap v^{-1}\mathfrak{u}^-v\oplus\hat{\mathfrak{u}}^-(\tau)\cap\hat{v}^{-1}\hat{\mathfrak{u}}^-\hat{v}\text{ as }T\text{-modules}\\
\Longleftrightarrow & \dps\bigoplus_{\beta\in\Phi^-\cap v^{-1}\Phi^+}\mathfrak{g}_\beta\simeq\dps\bigoplus_{\hat{\beta}\in\hat{\Phi}^-(\tau)\cap\hat{v}^{-1}\hat{\Phi}^-} \mathfrak{g}_{\hat{\beta}}\text{ as }T\text{-modules}.
\end{array} \]
Notice now that one has $\hat{\Phi}^-(\tau)=\hat{w}\hat{\Phi}^-$, where $\hat{w}\in\hat{W}$ is the element coming from the order matrix, as explained above. Then $\hat{\Phi}^-(\tau)\cap\hat{v}^{-1}\hat{\Phi}^-=\hat{w}\left(\hat{\Phi}^-\cap((\hat{v}\hat{w})^\vee)^{-1}\hat{\Phi}^+\right)$, denoted\footnote{for any $\hat{u}\in\hat{W}$, $\hat{u}^\vee$ is defined as $\hat{w}_0\hat{u}$, where $\hat{w}_0$ is the longest element of the Weyl group $\hat{W}$} by $\hat{w}\hat{\Phi}\big((\hat{v}\hat{w})^\vee\big)$. Thus:
\vspace{-5mm}
\begin{changemargin}{-9mm}{-9mm}
\[ \begin{array}{rl}
 & \mathrm{T}_{[e:x_0]}\eta\text{ is an isomorphism}\\
\Longleftrightarrow & \rho\left(\hat{\Phi}^-(\tau)\cap\hat{v}^{-1}\hat{\Phi}^-\right)\subset\Phi^-\cap v^{-1}\Phi^+\text{ and }\rho:\hat{\Phi}^-(\tau)\cap\hat{v}^{-1}\hat{\Phi}^-\longrightarrow\Phi^-\cap v^{-1}\Phi^+\text{ is a bijection}\\
\Longleftrightarrow & \rho\left(\hat{w}\hat{\Phi}\big(((\hat{v}\hat{w})^\vee)^{-1}\big)\right)\subset\Phi(v^{-1})\text{ and }\rho:\hat{w}\hat{\Phi}\big(((\hat{v}\hat{w})^\vee)^{-1}\big)\longrightarrow\Phi(v^{-1})\text{ is a bijection,}
\end{array} \]
\end{changemargin}
\end{proof}

\begin{rmk}
It is a classical result that, for $u\in W$ (resp. $\hat{u}\in\hat{W}$), the cardinal of the set $\Phi(u)$ (resp. $\hat{\Phi}(\hat{u})$) corresponds to the length of the element $u$ (resp. $\hat{u}$) of the Coxeter group $W$ (resp. $\hat{W}$), denoted $\ell(u)$ (resp. $\ell(\hat{u})$). As a consequence $\sharp\Phi(v^{-1})=\ell(v^{-1})=\ell(v)$.
\end{rmk}

In this context ($C=\{(v^{-1}B/B,\hat{v}^{-1}\hat{B}/\hat{B})\}$), there is a characterisation of well-covering pairs given by Ressayre in \cite{ressayre}, Proposition 11:

\begin{lemma}
The pair $(C,\tau)$ is well-covering if and only if it is covering and
\[ v^{-1}.\left(\sum_{\alpha\in\Phi^-\cap v\Phi^-}\alpha\right)+\rho\left(\hat{v}^{-1}.\sum_{\hat{\alpha}\in\hat{\Phi}^-\cap\hat{v}\hat{\Phi}^-(\tau)}\hat{\alpha}\right)=\sum_{\alpha\in\Phi^-}\alpha. \]
\end{lemma}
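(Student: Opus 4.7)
My plan is to characterise the well-covering property as the conjunction of covering and a local étaleness condition at the distinguished point $[e:x_0]$, and then translate this infinitesimal condition into the displayed sum-of-weights identity. First I would argue that $(C,\tau)$ is well-covering if and only if $\eta$ is covering and $\mathrm{T}_{[e:x_0]}\eta$ is an isomorphism: well-covering clearly implies both, and conversely a birational morphism between smooth varieties that is étale at a point is automatically a local isomorphism there (by Zariski's Main Theorem), so $\eta$ would then be an isomorphism on a neighbourhood of $[e:x_0]$, whose image is an open subset of $X$ meeting $C$.

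Next, by Theorem~\ref{bialynicki-birula_thm} one has $\mathrm{T}_{x_0}C^+=\mathrm{T}_{x_0}X_{\geq 0}$ and the tangent map to $\eta$ is the identity on this subspace, so étaleness at $[e:x_0]$ reduces to the orbit-map differential $\mathrm{T}_e\mathrm{orb}\colon\mathfrak{u}^-\to\mathrm{T}_{x_0}X_{<0}$ being a $T$-equivariant isomorphism. The $T$-weights on $\mathfrak{u}^-$ are $\Phi^-$; using the dominance and $\hat{G}$-regularity of $\tau$, the $T$-weights on $\mathrm{T}_{x_0}X_{<0}$ can be computed, along the same lines as in the proof of Proposition~\ref{condition dominant pair}, to be $(v^{-1}\Phi^-\cap\Phi^-)\sqcup\rho(\hat{v}^{-1}\hat{\Phi}^-\cap\hat{\Phi}^-(\tau))$. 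A $T$-equivariant linear isomorphism forces the sums of weights of source and target to agree, and the substitutions $\alpha=v\gamma$, $\hat{\alpha}=\hat{v}\hat{\gamma}$ rewrite this equality as exactly the identity in the statement. This yields the forward direction of the lemma, together with the necessity of the sum-of-weights condition in the reverse direction.

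The main obstacle is the sufficiency step: given covering and the sum-of-weights identity, one has to promote the abstract matching of $T$-characters into the specific map $\mathrm{T}_e\mathrm{orb}$ being invertible. The key input is that $\det(\mathrm{T}\eta)$ is a section of a $G$-equivariant line bundle on $G\times_BC^+$ which is not identically zero thanks to the birationality of $\eta$, and whose fibre at the $T$-fixed point $[e:x_0]$ is made into a trivial one-dimensional $T$-representation precisely by the sum-of-weights identity, so the section value there is a well-defined scalar rather than being forced to vanish for equivariance reasons. A direct computation on the cell $Bv^{-1}B/B\times\hat{B}(\tau)\hat{v}^{-1}\hat{B}/\hat{B}$, in the spirit of Proposition~11 of \cite{ressayre}, would then show this scalar is non-zero, yielding étaleness at $[e:x_0]$ and completing the argument.
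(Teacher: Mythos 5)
The paper offers no proof of this lemma: it is quoted directly from Ressayre (Proposition~11 of \cite{ressayre}), so there is no in-house argument to compare against, only Ressayre's, on which the paper relies wholesale. Judged on its own terms, your sketch gets the forward direction essentially right: well-covering yields covering together with \'etaleness of $\eta$ at the canonical preimage $[e:x_0]$ of $x_0$; by Theorem~\ref{bialynicki-birula_thm} the tangent map is the identity on $\mathrm{T}_{x_0}X_{\geq 0}=\mathrm{T}_{x_0}C^+$, so \'etaleness reduces to the orbit-map differential $\mathrm{T}_e\mathrm{orb}:\mathfrak{u}^-\to\mathrm{T}_{x_0}X_{<0}$ being an isomorphism; this map is $T$-equivariant, and equating total $T$-weights of the two sides, after the change of variable $\alpha=v\gamma$, $\hat\alpha=\hat v\hat\gamma$, produces exactly the displayed identity. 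Your computation of the target weights as $(v^{-1}\Phi^-\cap\Phi^-)\sqcup\rho(\hat v^{-1}\hat\Phi^-\cap\hat\Phi^-(\tau))$ is correct and matches the one in the proof of Proposition~\ref{condition dominant pair}.

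The sufficiency direction, however, has a genuine gap, one you half-concede yourself. You correctly note that the Jacobian of $\eta$ is a $G$-invariant section of a $G$-linearised line bundle on $G\times_B C^+$, that covering makes it non-identically-zero, and that the sum-of-weights identity says precisely that the $T$-weight of the fibre of this line bundle at the $T$-fixed point $[e:x_0]$ is trivial. But trivialising the fibre weight only removes the \emph{equivariance} obstruction to non-vanishing: a $T$-invariant section of a line bundle with trivial $T$-weight at a fixed point can perfectly well vanish there. Note also that the sum-of-weights identity is strictly weaker than an equality of weight \emph{multisets} on the two tangent spaces, so one cannot deduce \'etaleness by comparing $T$-modules at the fixed point alone; the converse direction genuinely needs a global input beyond pointwise linear algebra. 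The sentence ``a direct computation \ldots would then show this scalar is non-zero'' is precisely where Ressayre's Proposition~11 does its real work — analysing the $G$-stable non-\'etale divisor and how the covering hypothesis, the $B$-orbit through $[e:x_0]$, and the attracting cell $C^+$ together force $[e:x_0]$ outside it — and precisely where your argument stops. As written, the sketch is a plausible outline of the intended proof, not a proof.
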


\begin{lemma}\label{lemma_covering_well}
If $v$ and $\hat{v}$ are chosen as in Proposition \ref{condition dominant pair}, then:
\[ (C,\tau)\text{ is covering}\quad\Longrightarrow\quad(C,\tau)\text{ is well-covering}. \]
\end{lemma}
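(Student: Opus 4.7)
The plan is to invoke the characterisation of well-covering pairs from the preceding lemma of Ressayre: once $(C,\tau)$ is covering, being well-covering amounts to the root-sum identity
\[ v^{-1}.\Bigl(\sum_{\alpha\in\Phi^-\cap v\Phi^-}\alpha\Bigr)+\rho\Bigl(\hat{v}^{-1}.\sum_{\hat{\alpha}\in\hat{\Phi}^-\cap\hat{v}\hat{\Phi}^-(\tau)}\hat{\alpha}\Bigr)=\sum_{\alpha\in\Phi^-}\alpha. \]
So the whole task is to show that this identity is automatically satisfied as soon as the two bijection/inclusion conditions appearing in Proposition \ref{condition dominant pair} hold, regardless of the covering assumption (which is then used only to invoke Ressayre's criterion).

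First I would rewrite both sums after the Weyl-group translations. Using that $v^{-1}(\Phi^-\cap v\Phi^-)=v^{-1}\Phi^-\cap\Phi^-=\Phi^-\setminus\Phi(v^{-1})$, the first term becomes $\sum_{\beta\in\Phi^-\setminus\Phi(v^{-1})}\beta$. By the analogous computation for $\hat{v}$, one has $\hat{v}^{-1}(\hat{\Phi}^-\cap\hat{v}\hat{\Phi}^-(\tau))=\hat{v}^{-1}\hat{\Phi}^-\cap\hat{\Phi}^-(\tau)$, which is exactly the set $\hat{w}\hat{\Phi}\bigl(((\hat{v}\hat{w})^\vee)^{-1}\bigr)$ identified in the proof of Proposition \ref{condition dominant pair}. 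Thus the second term equals $\sum_{\hat{\beta}\in\hat{w}\hat{\Phi}(((\hat{v}\hat{w})^\vee)^{-1})}\rho(\hat{\beta})$.

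Now I would bring in the hypothesis from Proposition \ref{condition dominant pair}: $\rho$ restricts to a bijection from $\hat{w}\hat{\Phi}\bigl(((\hat{v}\hat{w})^\vee)^{-1}\bigr)$ onto $\Phi(v^{-1})$. Summing over the source and over the image therefore yields the same element of the character lattice of $T$, so the second term is exactly $\sum_{\alpha\in\Phi(v^{-1})}\alpha$. Adding the two contributions immediately gives
\[ \sum_{\beta\in\Phi^-\setminus\Phi(v^{-1})}\beta+\sum_{\alpha\in\Phi(v^{-1})}\alpha=\sum_{\alpha\in\Phi^-}\alpha, \]
which is the required identity. Combined with the hypothesis that $(C,\tau)$ is covering, Ressayre's lemma concludes that $(C,\tau)$ is well-covering.

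The proof is essentially a bookkeeping of roots, so I do not expect a genuine obstacle; the only slightly delicate point is keeping track of which sets one is summing over after the $v^{-1}$ and $\hat{v}^{-1}$ translations, and recognising the set appearing naturally in the $\hat{v}$-sum as precisely the one on which $\rho$ was assumed bijective. Once this identification is made, the bijection hypothesis does all the work and the rest of the argument is a line of arithmetic in the weight lattice.
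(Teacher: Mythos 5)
Your proof is correct and follows essentially the same route as the paper: both proofs translate the two sums by $v^{-1}$ and $\hat v^{-1}$, recognise the $\hat v$-sum's index set as $\hat{w}\hat{\Phi}\bigl(((\hat{v}\hat{w})^\vee)^{-1}\bigr)$, and then invoke the bijection hypothesis on $\rho$ to see that the contributions recombine to give $\sum_{\alpha\in\Phi^-}\alpha$; the paper merely phrases the final cancellation as $-\sum_{\alpha\in\Phi(v^{-1})}\alpha+\sum_{\alpha\in\Phi(v^{-1})}\alpha=0$ rather than as your $\bigl(\Phi^-\setminus\Phi(v^{-1})\bigr)\sqcup\Phi(v^{-1})=\Phi^-$, which is the same arithmetic.
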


\begin{proof}
Assume that $v$ and $\hat{v}$ are chosen as in Proposition \ref{condition dominant pair} and that $(C,\tau)$ is covering. Then:
\[ \begin{array}{cl}
 & v^{-1}.\left(\dps\sum_{\alpha\in\Phi^-\cap v\Phi^-}\alpha\right)+\rho\left(\hat{v}^{-1}.\dps\sum_{\hat{\alpha}\in\hat{\Phi}^-\cap\hat{v}\hat{\Phi}^-(\tau)}\hat{\alpha}\right)-\dps\sum_{\alpha\in\Phi^-}\alpha\\
= & \dps\sum_{\alpha\in\Phi^-\cap v^{-1}\Phi^-}\alpha-\sum_{\alpha\in\Phi^-}\alpha+\rho\left(\dps\sum_{\hat{\alpha}\in\hat{\Phi}^-(\tau)\cap \hat{v}^{-1}\hat{\Phi}^-}\hat{\alpha}\right)\\
= & -\dps\sum_{\alpha\in\Phi^-\cap v^{-1}\Phi^+}\alpha+\dps\sum_{\alpha\in\Phi^-\cap v^{-1}\Phi^+}\alpha\\
= & 0
\end{array} \]
and, by the previous lemma, $(C,\tau)$ is well-covering.
\end{proof}

\subsection{First case to apply Proposition 3.9: an existing result}

The first and simplest case to satisfy the condition of Proposition \ref{condition dominant pair} is the case when $\sharp\Phi(v^{-1})=0$, i.e. $\ell(v)=0$. This means that $v=\mathsf{1}_W$, the unit in $W$. But then we also need that $\sharp\hat{\Phi}\big(((\hat{v}\hat{w})^\vee)^{-1}\big)=0$, i.e. $\hat{w}^{-1}\hat{v}^{-1}\hat{w}_0=((\hat{v}\hat{w})^\vee)^{-1}=\mathsf{1}_{\hat{W}}$. Thus $\hat{v}^{-1}=\hat{w}\hat{w}_0$ gives a dominant pair $(C,\tau)$.

\vspace{5mm}

Moreover the Schubert condition given in Lemma \ref{schubert_condition_lemma} is not difficult to check here: according to the form of $C=\{(B/B,\hat{w}\hat{w}_0\hat{B}/\hat{B})\}$, the first Schubert variety to consider is just $\overline{B/B}$, which is a single point, whereas the second one is $\overline{\hat{B}\hat{w}_0\hat{w}^{-1}\hat{B}(\tau)/\hat{B}(\tau)}=\overline{\hat{B}\hat{w}_0\hat{B}/\hat{B}}$, which is the whole variety $\hat{G}/\hat{B}$. Hence the product of the two Schubert classes is in fact the class of a point, and then $(C,\tau)$ is well-covering by Lemma \ref{schubert_condition_lemma} and Lemma \ref{lemma_covering_well}. We obtain the following:

\begin{theo}\label{thm_length_zero}
Each order matrix corresponding to a dominant, regular, $\hat{G}$-regular one-parameter subgroup $\tau$ of $T$ gives a well-covering pair $(C,\tau)$ with:
\[ C=\big\{(B/B,\hat{w}\hat{w}_0\hat{B}/\hat{B})\big\}. \]
As a consequence the corresponding face $\mathcal{F}(C)$ of the Kronecker cone $\mathrm{PKron}_{n_1,n_2}$ contains only stable triples.
\end{theo}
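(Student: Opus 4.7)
The plan is to specialise the machinery of the previous subsections to the choice $v=\mathsf{1}_W$ and $\hat v=\hat w_0\hat w^{-1}$. With these values, the irreducible component of $X^\tau$ produced by the pair $(v,\hat v)$ is precisely $C=\{(B/B,\hat w\hat w_0\hat B/\hat B)\}$, the one appearing in the statement. Hence, once I have shown that $(C,\tau)$ is well-covering, Theorem \ref{result_tau_dominant} will immediately yield both the dimension statement and the fact that $\mathcal{F}(C)$ contains only stable triples, since $\tau$ is already assumed dominant, regular, and $\hat G$-regular.

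Dominance of $(C,\tau)$ will follow trivially from Proposition \ref{condition dominant pair}: since $v=\mathsf{1}_W$, we have $\Phi(v^{-1})=\emptyset$; and since $\hat v\hat w=\hat w_0$, we have $(\hat v\hat w)^\vee=\mathsf{1}_{\hat W}$, so $\hat w\hat\Phi\big(((\hat v\hat w)^\vee)^{-1}\big)=\hat w\hat\Phi(\mathsf{1}_{\hat W})=\emptyset$ as well. The required inclusion and bijection under $\rho$ are then vacuous, so the proposition applies.

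To upgrade to well-covering I would next verify the covering condition via the Schubert criterion of Lemma \ref{schubert_condition_lemma}(ii). Since $v=\mathsf{1}_W$ and $P(\tau)=B$, the variety $\overline{BvP(\tau)/P(\tau)}$ is reduced to the single point $B/B$, whose class in $\h^*(G/B,\Z)$ is the class of a point. On the $\hat G$-side, using the identification $\hat G/\hat B(\tau)\simeq\hat G/\hat B$ coming from the fact that $\hat B(\tau)$ and $\hat w\hat B\hat w^{-1}$ share the same root system, the Schubert variety $\overline{\hat B\hat v\hat B(\tau)/\hat B(\tau)}$ corresponds to $\overline{\hat B\hat w_0\hat B/\hat B}$, which is the whole flag variety; its class is therefore the fundamental class $1$, and $\iota^*$ sends it to $1\in\h^*(G/B,\Z)$. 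The cup product of the two classes is then $[\mathrm{pt}]$, hence $(C,\tau)$ is covering. Lemma \ref{lemma_covering_well} then promotes this to well-covering and Theorem \ref{result_tau_dominant} concludes.

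The only step requiring some care is the Schubert computation on the $\hat G$-side: one has to keep track of which Borel subgroup indexes the flag variety and check that the natural change-of-Borel isomorphism turns $\overline{\hat B\hat v\hat B(\tau)/\hat B(\tau)}$ into the ``big cell'' $\overline{\hat B\hat w_0\hat B/\hat B}$, which is what trivialises its cohomology class. Everything else is a direct application of the lemmas and propositions already established.
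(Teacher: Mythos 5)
Your argument reproduces the paper's own proof: choosing $v=\mathsf{1}_W$ and $\hat v^{-1}=\hat w\hat w_0$, applying Proposition \ref{condition dominant pair} with empty root sets to get dominance, verifying the Schubert condition of Lemma \ref{schubert_condition_lemma}(ii) by noting the first Schubert variety is a point and the second is the full flag variety, and then invoking Lemma \ref{lemma_covering_well} and Theorem \ref{result_tau_dominant}. This is exactly the route the paper takes, including the change-of-Borel identification $\hat B(\tau)/\hat B(\tau)=\hat w\hat B/\hat B$, so the proposal is correct and equivalent.
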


This theorem is actually an already existing result, evoked in the introduction and due independently to L. Manivel (see \cite{manivel}) and E. Vallejo (see \cite{vallejo}). Let us now explain their result and why it corresponds to the previous one.

\vspace{5mm}

We consider a matrix $A=(a_{i,j})_{i,j}\in\mathcal{M}_{n_1,n_2}(\Z_{\geq 0})$. We call $\lambda$ and $\mu$ its 1-marginals, i.e. the finite sequence of integers $(\lambda_1,\dots,\lambda_{n_1})$ and $(\mu_1,\dots,\mu_{n_2})$ given by
\[ \lambda_i=\sum_{j=1}^{n_2} a_{i,j}\qquad\text{and}\qquad\mu_j=\sum_{i=1}^{n_1} a_{i,j}, \]
and we suppose that $\lambda$ and $\mu$ are partitions (i.e. are non-increasing). Moreover, we denote by $\nu$ the $\pi$-sequence of $A$, i.e. the (finite) non-increasing sequence $(\nu_1,\dots,\nu_{n_1n_2})$ formed by the entries of $A$.

\begin{de}
The matrix $A$ is said to be additive if there exist integers $x_1>\dots>x_{n_1}$ and $y_1>\dots>y_{n_2}$ such that, for all $(i,j),(k,l)\in\llbracket 1,n_1\rrbracket\times\llbracket 1,n_2\rrbracket$,
\[ a_{i,j}>a_{k,l}\Longrightarrow x_i+y_j>x_k+y_l. \]
\end{de}

\begin{theo}[Manivel, Vallejo]\label{result_manivel_vallejo}
Assume that the matrix $A$ is additive. Then the triple $(\lambda,\mu,\nu)$ of partitions is a stable triple.
\end{theo}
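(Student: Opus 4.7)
The plan is to deduce this theorem directly from Theorem \ref{thm_length_zero} by producing, from the additive matrix $A$ and its witnesses $x_i, y_j$, a well-covering pair $(C, \tau)$ whose face $\mathcal{F}(C)$ contains the triple $(\lambda, \mu, \nu)$. Starting from witnesses $x_1 > \dots > x_{n_1}$ and $y_1 > \dots > y_{n_2}$, I first perturb them if necessary so that the $n_1 n_2$ sums $x_i + y_j$ are pairwise distinct; this is a mild perturbation preserving all strict inequalities used in the definition of additivity, and it changes neither $A$ nor $\lambda, \mu, \nu$. The resulting one-parameter subgroup
\[
\tau : t \longmapsto \bigl(\mathrm{diag}(t^{x_1}, \dots, t^{x_{n_1}}), \, \mathrm{diag}(t^{y_1}, \dots, t^{y_{n_2}})\bigr)
\]
is then dominant, regular and $\hat{G}$-regular, so Theorem \ref{thm_length_zero} produces a well-covering pair $(C, \tau)$ with $C = \{(B/B, \hat{w}\hat{w}_0 \hat{B}/\hat{B})\}$, where $\hat{w}$ is the element of $\hat{W}$ encoded by the order matrix of $\tau$, and the corresponding face $\mathcal{F}(C) \subset \mathrm{PKron}_{n_1,n_2}$ contains only stable triples. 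It therefore suffices to check that $(\lambda, \mu, \nu) \in \mathcal{F}(C)$.

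Since $C$ is a single point, verifying $(\lambda,\mu,\nu) \in \mathcal{F}(C)$ reduces to showing that the total $\tau$-weight on the one-dimensional fibre of $\mathcal{L}_{\lambda,\mu,\nu}$ at $(B/B, \hat{w}\hat{w}_0 \hat{B}/\hat{B})$ vanishes. The factors $\mathcal{L}_\lambda|_{B/B}$ and $\mathcal{L}_\mu|_{B/B}$ contribute respectively $-\sum_i \lambda_i x_i$ and $-\sum_j \mu_j y_j$. The remaining factor $\mathcal{L}_\nu^*|_{\hat{w}\hat{w}_0 \hat{B}/\hat{B}}$ is handled by expanding the fibre along the complete $\hat{T}$-stable flag determined by $\hat{w}\hat{w}_0$: the combination of the order matrix with the $\hat{w}_0$-twist inherent in the dual line bundle pairs the $k$-th part of $\nu$ with the $k$-th largest of the values $x_i + y_j$. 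Denoting by $w_1 > \dots > w_{n_1 n_2}$ the decreasing rearrangement of the sums $x_i + y_j$, this third contribution is therefore $\sum_{k=1}^{n_1 n_2} \nu_k w_k$.

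The additivity of $A$ is what closes the computation: the implication $a_{i,j} > a_{k,l} \Rightarrow x_i + y_j > x_k + y_l$ says that the decreasing rearrangement $\nu_k$ of the entries of $A$ is matched, via the same permutation on $\llbracket 1, n_1\rrbracket \times \llbracket 1, n_2\rrbracket$, with the decreasing rearrangement $w_k$ of the sums $x_i + y_j$. Hence
\[
\sum_{k=1}^{n_1 n_2} \nu_k w_k \;=\; \sum_{i,j} a_{i,j}(x_i + y_j) \;=\; \sum_i \lambda_i x_i + \sum_j \mu_j y_j,
\]
so the three contributions cancel, $\mu^{\mathcal{L}_{\lambda,\mu,\nu}}(C, \tau) = 0$, and $(\lambda, \mu, \nu)$ lies in $\mathcal{F}(C)$. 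By Theorem \ref{thm_length_zero}, the triple is stable. The main technical obstacle I anticipate is precisely the weight calculation on $\mathcal{L}_\nu^*$ at the fixed point $\hat{w}\hat{w}_0 \hat{B}/\hat{B}$: one must track carefully how the permutation $\hat{w}$ coming from the order matrix, composed with the longest element $\hat{w}_0$ and the duality of the line bundle, reorders the parts of $\nu$ against the sums $x_i + y_j$ so that the comonotone pairing appears, and not one of the many other rearrangements.
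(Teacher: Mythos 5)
Your approach is essentially the one the paper itself takes: realize the Manivel--Vallejo theorem as a consequence of the ``length zero'' case of the well-covering-pair construction (Theorem \ref{thm_length_zero}), and then verify by a fixed-point weight calculation that the 1-marginal triple $(\lambda,\mu,\nu)$ lies on the face $\mathcal{F}(C)$. The perturbation of witnesses that you use to make $\tau$ $\hat{G}$-regular is also in the paper, tucked into a footnote. Where the two expositions differ is mostly organisational: the paper works on $Y = G/B \times \hat{G}/\hat{P}$ with $\hat{P}$ chosen according to the shape of $\nu$ (so that repeated entries of $A$ are treated directly), and only afterwards identifies the resulting pair with the one of Theorem \ref{thm_length_zero}; you instead work from the start on $X = G/B \times \hat{G}/\hat{B}$ and let the perturbation absorb the repeated-entries case. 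Both routes are legitimate, and yours is arguably cleaner.

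Two remarks, neither fatal. First, to place $(\lambda,\mu,\nu)$ in $\mathcal{F}(C)$ you must also know that $(\lambda,\mu,\nu)\in\mathrm{PKron}_{n_1,n_2}$; checking $\mu^{\Lb_{\lambda,\mu,\nu}}(C,\tau)=0$ for the one chosen $\tau$ is not \emph{a priori} enough. But your rearrangement identity is a statement about the $T$-weight of the fibre: once you observe that $\nu_k$ equals $a_{\hat{u}(k)}$ for the fixed permutation $\hat{u}$ defining $C$, the cancellation of contributions holds as an identity of \emph{characters} of $T$, not just of their pairings with $\tau$. Then Remark \ref{remark_dominant_regular_case} and the well-covering property give $g_{\lambda,\mu,\nu}=1$, so the triple is indeed in the semigroup. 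You should make that step explicit rather than phrasing everything in terms of a single $\tau$-weight. Second, you correctly flag the sign/ordering bookkeeping in the $\Lb_\nu^*$ computation (how $\hat{w}$, $\hat{w}_0$ and the dual combine to pair $\nu_k$ with the right rearrangement of the $x_i+y_j$) as the delicate point and leave it as a claim; the paper does exactly the same, calling it ``an easy computation''. Your proof sketch is therefore no less complete than the paper's own, and the overall strategy is the same.
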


Manivel and Vallejo gave different proofs of this result. What we want to highlight here is that it corresponds to Theorem \ref{thm_length_zero}.

\begin{proof}
The parabolic subgroup $\hat{P}$ of $\hat{G}$ we consider this time is the one corresponding to the ``shape'' of $-\hat{w}_0.\nu$, i.e. the one such that $\Lb^*_\nu$ is the pull-back of an ample line bundle on $\hat{G}/\hat{P}$. Furthermore, we take $P=B$, and so
\[ Y=G/B\times\hat{G}/\hat{P}. \]
The matrix $A$ gives a flag in $\hat{G}/\hat{P}$, similarly to what we explained about the order matrix of a one-parameter subgroup of $T$: the ordering of the coefficients $a_{i,j}$ in \textit{non-decreasing} order (it is different from before) gives a partial (since some of these coefficients can be equal) ordering of the elements $e_i\otimes f_j$ of the basis of $V_1\otimes V_2$. Then this ordering corresponds to a $\hat{T}$-stable partial flag in $V_1\otimes V_2$ that is precisely an element of $\hat{G}/\hat{P}$.

\vspace{5mm}

\noindent \underline{Example:} The additive matrix $\begin{pmatrix}
3&2\\
3&1
\end{pmatrix}$ gives the flag $(\C e_2\otimes f_2\subset\C e_2\otimes f_2\oplus\C e_1\otimes f_2\subset\C e_2\otimes f_2\oplus\C e_1\otimes f_2\oplus\C e_1\otimes f_1\oplus\C e_2\otimes f_1=V_1\otimes V_2)\in\fl(1,2;V_1\otimes V_2)$, which we will denote by $\mathrm{fl}\big(e_2\otimes f_2,e_1\otimes f_2,\{e_1\otimes f_1,e_2\otimes f_1\}\big)$.

\vspace{5mm}

\noindent The obtained flag is thus of the form $\hat{u}\hat{P}/\hat{P}$, with $\hat{u}\in\hat{W}\simeq\gs_{n_1n_2}$. In the previous example, the flag can for instance be written with $\hat{u}=(1\hphantom{a}4\hphantom{a}3).$

\vspace{5mm}

\noindent\underline{Remark:} This element $\hat{u}$ is in general not uniquely defined: what is unique is its class in $\hat{W}/\hat{W}_{\hat{P}}$, but it is sufficient to pick one representative $\hat{u}\in\hat{W}$ of this one.

\vspace{5mm}

\noindent We then set
\[ x_0=(B/B,\hat{u}\hat{P}/\hat{P})\in Y. \]
The point $x_0$ is fixed by $T$, and we can check (this is an easy computation) that, for any one-parameter subgroup $\tau$ of $T$,
\[ \mu^{\Lb_{\lambda,\mu,\nu}}(x_0,\tau)=0. \]
Since $A$ is additive, there exist integers $x_1>\dots>x_m$ and $y_1>\dots>y_n$ such that, for all $(i,j),(k,l)\in\llbracket 1,m\rrbracket\times\llbracket 1,n\rrbracket$,
\[ a_{i,j}>a_{k,l}\Longrightarrow x_i+y_j>x_k+y_l. \]
This means that, if we consider the following one-parameter subgroup $\tau$ of $T$:
\[ \begin{array}{rccl}
\tau: & \C^* & \longrightarrow & T\\
 & t & \longmapsto & (\begin{pmatrix}
 t^{x_1} & & \\
  & \ddots & \\
  & & t^{x_m}
 \end{pmatrix},\begin{pmatrix}
 t^{y_1} & & \\
 & \ddots & \\
 & & t^{y_n}
 \end{pmatrix})
\end{array}, \]
it is dominant, regular, and verifies that, for all $\hat{\alpha}\in\hat{\Phi}$,
\begin{equation}\label{eq1}
\hat{v}^{-1}.\hat{\alpha}\in\hat{\Phi}\setminus\hat{\Phi}_{\hat{\mathfrak{p}}}\Longrightarrow\langle\hat{\alpha},\tau\rangle >0.
\end{equation}
Moreover, we can always assume that $\tau$ is $\hat{G}$-regular\footnote{The set of one-parameter subgroups of $T$ verifying condition \eqref{eq1} is an open convex polyhedral cone and, among those subgroups, the not $\hat{G}$-regular ones are elements of some hyperplanes. Thus the set of dominant $\hat{G}$-regular one-parameter subgroups of $T$ verifying condition \eqref{eq1} is not empty.}. As a consequence, $C=\{x_0\}$ is an irreducible component of $Y^\tau$ (cf Remark \ref{remark_dominant_regular_case}).

\vspace{5mm}

This pair $(C,\tau)$ corresponds actually to the same pair as in Theorem \ref{thm_length_zero}: consider the order matrix of the one-parameter subgroup $\tau$ of $T$ and $\hat{w}\in\hat{W}$ the well-defined (since $\tau$ is dominant, regular, $\hat{G}$-regular) Weyl group element we associated to such an order matrix.\\
\underline{First case:} Assume that the coefficients of the matrix $A$ are pairwise distinct. Then the relation between $\hat{w}$ and $\hat{u}$ is simply $\hat{u}=\hat{w}\hat{w}_0$. Then the pair $(C,\tau)$ is exactly the one of Theorem \ref{thm_length_zero} and thus the face $\mathcal{F}(C)$ of $\mathrm{PKron}_{n_1,n_2}$ contains only stable triples. Finally, considering what we have written before, $(\lambda,\mu,\nu)\in\mathcal{F}(C)$.\\
\underline{Second case:} Assume some of the coefficients of $A$ are equal. Then the relation between $\hat{w}$ and $\hat{u}$ is rather that $\hat{u}$ and $\hat{w}\hat{w}_0$ are the same modulo multiplication by $\hat{W}_{\hat{P}}$ on the right. But this means that the two still define the same partial flag in $\hat{G}/\hat{P}$ and, for the same reasons as in the first case, the triple $(\lambda,\mu,\nu)$ is on the face of the Kronecker cone $\mathrm{PKron}_{n_1,n_2}$ given by Theorem \ref{thm_length_zero}. As a consequence it is stable. Moreover, the face of $\mathrm{PKron}_{n_1,n_2}\cap\{(\alpha,\beta,\gamma)\text{ s.t. }\Lb^*_\gamma\text{ is a line bundle on }\hat{G}/\hat{P}\}$ given by $(C,\tau)$ with $C=\{(B/B,\hat{u}\hat{P}/\hat{P})\}\subset Y$ is simply the section of this former face by the subspace $\{(\alpha,\beta,\gamma)\text{ s.t. }\Lb^*_\gamma\text{ is a line bundle on }\hat{G}/\hat{P}\}$.
\end{proof}

The fact that an additive matrix gives in fact a face of minimal dimension of the cone $\mathrm{PKron}_{n_1,n_2}$ was already explained by Manivel in \cite{manivel}.

\subsection{Second case: length 1}\label{cas cardinal 1}

At first we need some more notations: for $i\in\llbracket 1,n_1\rrbracket$, $j\in\llbracket 1,n_2\rrbracket$, and $k\in\llbracket 1,n_1n_2\rrbracket$ (which corresponds to a pair $(i',j')\in\llbracket 1,n_1\rrbracket\times\llbracket 1,n_2\rrbracket$, see Footnote \ref{footnote ordre}), $\varepsilon_i$, $\eta_j$, and $\hat{\varepsilon}_k=\hat{\varepsilon}_{(i',j')}$ are the characters of $T_1$ (the set of diagonal matrices in $G_1$), $T_2$ (same in $G_2$), and $\hat{T}$ respectively, defined by
\[ \varepsilon_i:\begin{pmatrix}
t_1&&\\
&\ddots&\\
&&t_{n_1}
\end{pmatrix}\longmapsto t_i,\]
\[ \eta_j:\begin{pmatrix}
t_1&&\\
&\ddots&\\
&&t_{n_2}
\end{pmatrix}\longmapsto t_j,\]
and
\[ \hat{\varepsilon}_k:\begin{pmatrix}
t_1&&\\
&\ddots&\\
&&t_{n_1n_2}
\end{pmatrix}\longmapsto t_k.\]

\vspace{5mm}

Assume that $\sharp\Phi(v^{-1})=1$, i.e. $\ell(v)=1$. This means that $v=v^{-1}=s_\alpha$, with $\alpha$ a simple root of $G$ (and then $\Phi(s_\alpha)=\{-\alpha\}$). There are two kinds of such $\alpha$'s:
\begin{itemize}
\item the simple roots of $G_1=\gl(V_1)$, which are the $\varepsilon_i-\varepsilon_{i+1}$, for $i\in\llbracket 1,n_1-1\rrbracket$,
\item the simple roots of $G_2=\gl(V_2)$, which are the $\eta_i-\eta_{i+1}$, for $i\in\llbracket 1,n_2-1\rrbracket$.
\end{itemize}
In addition, since we also want $\sharp\hat{\Phi}\big(((\hat{v}\hat{w})^\vee)^{-1}\big)=1$, it is necessary that $((\hat{v}\hat{w})^\vee)^{-1}=s_{\hat{\alpha}}$, for $\hat{\alpha}$ a simple root of $\hat{G}$, i.e. a $\hat{\varepsilon}_k-\hat{\varepsilon}_{k+1}$ for $k\in\llbracket 1,n_1n_2-1\rrbracket$. Then we have
\[ \hat{w}.\hat{\Phi}(s_{\hat{\alpha}})=\{\hat{\varepsilon}_{\hat{w}(k+1)}-\hat{\varepsilon}_{\hat{w}(k)}\}. \]
As a consequence we see that $\alpha$ and $\hat{\alpha}$ will be suitable if and only if
\begin{itemize}
\item $\hat{w}(k)=(i,j)\in\llbracket 1,n_1\rrbracket\times\llbracket 1,n_2\rrbracket\simeq\llbracket 1,n_1n_2\rrbracket$ and $\hat{w}(k+1)=(i,j+1)$,
\item or $\hat{w}(k)=(i,j)$ and $\hat{w}(k+1)=(i+1,j)$.
\end{itemize}
In these cases it is easy to then express $v^{-1}$ and $\hat{v}^{-1}$ and we will get the following result:

\begin{theo}
\begin{itemize}
\item As soon as we have $k\in\llbracket 1,n_1n_2-1\rrbracket$ such that $\hat{w}(k)=(i,j)\in\llbracket 1,n_1\rrbracket\times\llbracket 1,n_2\rrbracket\simeq\llbracket 1,n_1n_2\rrbracket$ and $\hat{w}(k+1)=(i,j+1)$, we have a well-covering pair $(C,\tau)$ (and hence a regular face $\mathcal{F}(C)$ of the Kronecker cone $\mathrm{PKron}_{n_1,n_2}$ containing only stable triples), where
\[ C=\Big\{\Big(\big(\mathsf{1},(j\hphantom{a}j+1)\big).B/B,\hat{w}(k\hphantom{a}k+1)\hat{w}_0.\hat{B}/\hat{B}\Big)\Big\}. \]
\item Likewise, if $k\in\llbracket 1,n_1n_2-1\rrbracket$ is such that $\hat{w}(k)=(i,j)$ and $\hat{w}(k+1)=(i+1,j)$, the pair $(C,\tau)$, where
\[ C=\Big\{\Big(\big((i\hphantom{a}i+1),\mathsf{1}\big).B/B,\hat{w}(k\hphantom{a}k+1)\hat{w}_0.\hat{B}/\hat{B}\Big)\Big\}, \]
is well-covering.
\end{itemize}
\end{theo}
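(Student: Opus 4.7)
The plan is to produce the claimed well-covering pair by applying Proposition \ref{condition dominant pair} to establish dominance, then verifying covering via the Schubert cohomology criterion of Lemma \ref{schubert_condition_lemma}(ii), and finally using Lemma \ref{lemma_covering_well} to promote covering to well-covering. Since $\tau$ is dominant, regular, and $\hat{G}$-regular, Theorem \ref{result_tau_dominant} will then directly deliver the conclusion: $\mathcal{F}(C)$ is a regular face of $\mathrm{PKron}_{n_1,n_2}$ of minimal dimension $n_1n_2$ containing only stable triples.

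For dominance, consider the first case. Setting $\hat{\alpha}=\hat{\varepsilon}_k-\hat{\varepsilon}_{k+1}$ so that $((\hat{v}\hat{w})^\vee)^{-1}=s_{\hat{\alpha}}$ forces $\hat{v}^{-1}=\hat{w}(k\ k+1)\hat{w}_0$, exactly as in the statement. A direct computation gives $\hat{w}\hat{\Phi}(s_{\hat{\alpha}})=\{\hat{\varepsilon}_{\hat{w}(k+1)}-\hat{\varepsilon}_{\hat{w}(k)}\}=\{\hat{\varepsilon}_{(i,j+1)}-\hat{\varepsilon}_{(i,j)}\}$, whose image under $\rho$ is $\eta_{j+1}-\eta_j=-\alpha$ with $\alpha=\eta_j-\eta_{j+1}$ a simple root of $G_2$. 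Setting $v^{-1}=s_\alpha=(\mathsf{1},(j\ j+1))$ yields $\Phi(v^{-1})=\{-\alpha\}$, and the bijectivity hypothesis of Proposition \ref{condition dominant pair} is trivially satisfied between singletons. The second case is handled symmetrically with $\alpha=\varepsilon_i-\varepsilon_{i+1}$ a simple root of $G_1$.

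For covering, Lemma \ref{schubert_condition_lemma}(ii) reduces the task to showing $[\overline{BvB/B}]\cdot\iota^*([\overline{\hat{B}\hat{v}\hat{B}(\tau)/\hat{B}(\tau)}])=[\mathrm{pt}]$ in $\h^*(G/B,\Z)$. The first factor is the Schubert line class $[X_{s_\alpha}]$. For the second, the isomorphism $\phi:\hat{G}/\hat{B}\to\hat{G}/\hat{B}(\tau)$, $g\hat{B}\mapsto g\hat{w}^{-1}\hat{B}(\tau)$, identifies the Schubert divisor with $X_{\hat{v}\hat{w}}=X_{\hat{w}_0 s_{\hat{\alpha}}}$ in $\hat{G}/\hat{B}$, which has class $c_1(\mathcal{L}_{\hat{\omega}_k})$ for $\hat{\omega}_k=\hat{\varepsilon}_1+\cdots+\hat{\varepsilon}_k$. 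Transporting back through $\phi$ twists the underlying $\hat{T}$-character by $\hat{w}$, and restricting via $\iota^*$ (which is $\rho$ on characters) produces the class $c_1(\mathcal{L}_\chi)$ on $G/B$ with $\chi=\rho(\hat{w}.\hat{\omega}_k)=\sum_{l=1}^k(\varepsilon_{i_l}+\eta_{j_l})$ where $(i_l,j_l)=\hat{w}(l)$. Chevalley's formula then gives intersection number $\langle\chi,\alpha^\vee\rangle$.

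It remains to check $\langle\chi,\alpha^\vee\rangle=1$. In Case 1 this pairing equals the number of positions in column $j$ of rank $\leq k$ minus the number in column $j+1$ of rank $\leq k$. The monotonicity $x_1>\cdots>x_{n_1}$ and $y_1>\cdots>y_{n_2}$, together with $\hat{w}(k)=(i,j)$ and $\hat{w}(k+1)=(i,j+1)$, force these counts to be exactly $i$ (the positions $(1,j),\dots,(i,j)$) and $i-1$ (the positions $(1,j+1),\dots,(i-1,j+1)$), giving $1$. Case 2 is symmetric by rows. Lemma \ref{lemma_covering_well} then upgrades covering to well-covering, and Theorem \ref{result_tau_dominant} closes the argument. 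The main obstacle is the cohomological bookkeeping in the covering step: correctly accounting for the $\hat{w}$-twist produced by $\phi$ on line bundle classes is essential, since otherwise the pulled-back character $\chi$ would not depend on $\hat{w}$ and the combinatorial count could not consistently yield $1$.
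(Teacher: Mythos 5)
Your proof is correct, and it reaches the conclusion by essentially the same overall scaffolding as the paper (dominance via Proposition \ref{condition dominant pair}, then covering, then Lemma \ref{lemma_covering_well}, then Theorem \ref{result_tau_dominant}), but the covering step is handled by a genuinely different route. The paper proves covering geometrically: it writes out the closed immersion $\iota$ explicitly on flags, describes $\iota(X_{s_\alpha})$ as a set of flags parametrized by a single $j_0$-dimensional subspace $F_{j_0}$ pinched between two fixed coordinate subspaces, describes the generic translate of the Schubert divisor as flags meeting a generic subspace $S$ of the right dimension, and then observes by hand that the intersection is a unique flag. Your argument instead computes the cohomology class $\iota^*\bigl[\overline{\hat{B}\hat{v}\hat{B}(\tau)/\hat{B}(\tau)}\bigr]$ as $c_1(\mathcal{L}_\chi)$ with $\chi=\rho(\hat{w}.\hat{\omega}_k)$, applies the Chevalley formula to reduce the Schubert condition to $\langle\chi,\alpha^\vee\rangle=1$, and then verifies this pairing by counting positions in columns $j$ and $j+1$ of rank $\leq k$. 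Both verifications work; the paper's is more elementary and self-contained (no Chern classes or Chevalley formula needed, just a direct identification of flags), while yours is arguably cleaner once the line-bundle machinery is in place, and it isolates the combinatorial content in a transparent pairing $\langle\chi,\alpha^\vee\rangle$. The one delicate point in your approach, which you flagged, is the $\hat{w}$-twist on characters introduced by moving between $\hat{G}/\hat{B}(\tau)$ and $\hat{G}/\hat{B}$; your bookkeeping there is correct (the pullback of $\mathcal{L}_{\hat{\omega}_k}$ along $gB\mapsto g\hat{w}\hat{B}$ is indeed $\mathcal{L}_{\rho(\hat{w}.\hat{\omega}_k)}$), and this twist is essential --- without it the pairing would not detect the position of $k$ in the order matrix. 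Your counting argument also correctly uses that $(i,j)$ has rank exactly $k$ and $(i,j+1)$ rank exactly $k+1$, together with $x_1>\dots>x_{n_1}$, to pin the two column counts at $i$ and $i-1$.
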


\begin{proof}
We have already seen why these two kinds of properties for $\hat{w}$ give dominant pairs (thanks to Proposition \ref{condition dominant pair}). Then all that remains to be seen is whether these pairs are in fact well-covering, which will be done by looking at the Schubert condition (see Lemma \ref{schubert_condition_lemma}): recall that we have an injective map
\[ \begin{array}{rccl}
\iota: & G/B & \hooklongrightarrow & \hat{G}/\hat{B}(\tau)\\
 & gB & \longmapsto & g\hat{B}(\tau)
\end{array} \]
(with $\hat{B}(\tau)/\hat{B}(\tau)=\hat{w}\hat{B}/\hat{B}$). We can be a little more precise while describing what $\iota$ does on flags:
\[ \begin{array}{rccl}
\iota: & \fl(V_1)\times\fl(V_2) & \hooklongrightarrow & \fl(V_1\otimes V_2)\\
 & \big((E_1\subset\dots\subset E_{n_1-1}),(F_1\subset\dots\subset F_{n_2-1})\big) & \longmapsto & (H_1\subset\dots\subset H_{n_1n_2-1})
\end{array}, \]
with:
\begin{changemargin}{-6mm}{-6mm}
\[ \forall k\in\llbracket 1,n_1n_2-1\rrbracket, \: H_k=H_{k-1}+E_i\otimes F_j, \text{ where }(i,j)=\hat{w}(k)\text{ and }H_0=\{0\},E_{n_1}=V_1,F_{n_2}=V_2. \]
\end{changemargin}
Then we want to look at the intersection between two generic translates of
\[ \hat{X}_{\hat{v}}=\overline{\hat{B}\hat{v}\hat{B}(\tau)/\hat{B}(\tau)} \]
and
\[ \iota(X_v)=\iota(\overline{BvB/B}) \]
(with the usual notation $C=\big\{(v^{-1}B/B,\hat{v}^{-1}\hat{B}/\hat{B})\big\}$). Here, in both cases, there exists $k_0\in\llbracket 1,n_1n_2-1\rrbracket$ such that $\hat{X}_{\hat{v}}$ is of the form $\overline{\hat{B}\hat{w}_0 s_{\hat{\alpha}_{k_0}}\hat{w}^{-1}\hat{B}(\tau)/\hat{B}(\tau)}=\overline{\hat{B}\hat{w}_0 s_{\hat{\alpha}_{k_0}}\hat{B}/\hat{B}}$, i.e. is a Schubert variety of codimension 1, and hence a divisor of $\hat{G}/\hat{B}=\fl(V_1\otimes V_2)$. As a consequence it can be rewritten as
\[ \hat{X}_{\hat{v}}=\big\{(H_1\subset\dots\subset H_{n_1n_2-1})\in\fl(V_1\otimes V_2)\text{ s.t. }\dim\big(H_{k_0}\cap\vect(\hat{e}_1,\dots,\hat{e}_{n_1n_2-k_0}\big)\geq 1\big\}. \]
Then all the translates of $\hat{X}_{\hat{v}}$ correspond to
\[ \big\{(H_1\subset\dots\subset H_{n_1n_2-1})\in\fl(V_1\otimes V_2)\text{ s.t. }\dim(H_{k_0}\cap S)\geq 1\big\}, \]
for all vector subspaces $S$ of $V_1\otimes V_2$ of dimension $n_1n_2-k_0$.

\vspace{5mm}

\noindent\textbf{First case:} $v=\big(\mathsf{1},(j_0\hphantom{a}j_0+1)\big)=s_{\beta_{j_0}}$ (i.e. $\hat{w}(k_0)=(i_0,j_0)$ for some $i_0$ and $\hat{w}(k_0+1)=(i_0,j_0+1)$). Then
\begin{changemargin}{-5mm}{-5mm}
\[ \begin{array}{cl}
 & X_{s_{\beta_{j_0}}}=\overline{Bs_{\beta_{j_0}}B/B}\\
= & \Big\{\Big(\big(\vect(e_1)\subset\dots\subset\vect(e_1,\dots,e_{n_1-1})\big),\big(\vect(f_1)\subset\dots\subset\vect(f_1,\dots,f_{j_0-1})\subset F_{j_0}\\
 & \subset\vect(f_1,\dots,f_{j_0+1})\subset\dots\subset\vect(f_1,\dots,f_{n_2-1})\big)\Big)\in\fl(V_1)\times\fl(V_2)\;;\;F_{j_0}\text{ of dim }j_0\Big\}.
\end{array} \]
\end{changemargin}
As a consequence, $\iota(X_{s_{\beta_{j_0}}})$ is the set of all flags $(H_1\subset\dots\subset H_{n_1n_2-1})$ such that there exists a subspace $F_{j_0}$ of dimension $j_0$ of $V_2$ verifying $\vect(f_1,\dots,f_{j_0-1})\subset F_j\subset\vect(f_1,\dots,f_{j_0+1})$ and, for all $k\in\llbracket 1,n_1n_2\rrbracket$ corresponding to some $(i,j)$,
\[ \left\lbrace\begin{array}{ll}
H_k=H_{k-1}+\vect(e_1,\dots,e_i)\otimes F_{j_0} & \text{when }j=j_0\\
H_k=H_{k-1}+\vect(e_1,\dots,e_i)\otimes\vect(f_1,\dots,f_j) & \text{otherwise}
\end{array}\right.. \]
Then a generic translate of $\hat{X}_{\hat{v}}$ is
\[ \big\{(H_1\subset\dots\subset H_{n_1n_2-1})\in\fl(V_1\otimes V_2)\text{ s.t. }\dim(H_k\cap S)\geq 1\big\} \]
for $S$ of dimension $n_1n_2-k_0$ which does not intersect $\vect(\hat{e}_{\hat{w}(1)},\dots,\hat{e}_{\hat{w}(k_0)})$.
Thus
\begin{changemargin}{-4mm}{-4mm}
\[ \begin{array}{rcl}
\iota(X_{s_{\beta_{j_0}}})\cap Y & = & \big\{(H_1\subset\dots\subset H_{n_1n_2-1})\in\iota(X_{s_{\beta_{j_0}}})\text{ s.t. }H_k\cap S\neq\{0\}\big\}\\
 & = & \big\{(H_1\subset\dots\subset H_{n_1n_2-1})\in\iota(X_{s_{\beta_{j_0}}})\text{ s.t. }F_{j_0}=\vect(f_1,\dots,f_{j_0-1},f_{j_0+1})\big\}
\end{array} \]
\end{changemargin}
(since $\hat{w}(k_0+1)$ is $(i_0,j_0+1)$). This is a singleton, and the Schubert condition is then verified. As a consequence, the pair $(C,\tau)$ is covering, and hence well-covering by Lemma \ref{lemma_covering_well}.

\vspace{5mm}

\noindent\textbf{Second case:} $v=\big((i_0\hphantom{a}i_0+1),\mathsf{1}\big)=s_{\alpha_{i_0}}$. It is sufficient to exchange the roles of $V_1$ and $V_2$.
\end{proof}

\vspace{5mm}

These kinds of properties of $\hat{w}$ are really easy to ``read'' on the order matrix, which allows us to reformulate the previous theorem in the following equivalent way:

\begin{theo}\label{resultat cardinal 1}
For any dominant, regular, $\hat{G}$-regular one-parameter subgroup $\tau$ of $T$, each configuration of the following type in the order matrix of $\tau$:
\begin{center}
\begin{tikzpicture}
\node (un) at (0,0) {$k$};
\draw (un) circle (5mm);
\node (deux) at (1,0) {$k+1$};
\draw (deux) circle (5mm);
\node (trois) at (4,0) {row $i$};
\draw[->] (trois) -- (2,0);
\node (quatre) at (0,-2) {$j$};
\draw[->] (quatre) -- (0,-1);
\node (cinq) at (1,-2) {$j+1$};
\draw[->] (cinq) -- (1,-1);
\node at (-1,-2) {columns};
\end{tikzpicture}
\end{center}
gives a well-covering pair $(C,\tau)$, with
\[ C=\Big\{\Big(\big(\mathsf{1},(j\hphantom{a}j+1)\big).B/B,\hat{w}(k\hphantom{a}k+1)\hat{w}_0.\hat{B}/\hat{B}\Big)\Big\}. \]
Hence we obtain a regular face of the Kronecker cone $\mathrm{PKron}_{n_1,n_2}$, of dimension $n_1n_2$ and containing only stable triples:
\[ \mathcal{F}(C)=\big\{(\alpha,\beta,\gamma)\in\mathrm{PKron}_{n_1,n_2}\text{ s.t. }T\text{ acts trivially on the fibre of }\Lb_{\alpha,\beta,\gamma}\text{ over }C\big\}. \]
Likewise, each configuration of the type
\begin{center}
\begin{tikzpicture}
\node (un) at (0,0) {$k$};
\draw (un) circle (5mm);
\node (deux) at (0,-1) {$k+1$};
\draw (deux) circle (5mm);
\node (trois) at (3,0) {row $i$};
\draw[->] (trois) -- (1,0);
\node (quatre) at (3,-1) {row $i+1$};
\draw[->] (quatre) -- (1,-1);
\node (cinq) at (0,-3) {column $j$};
\draw[->] (cinq) -- (0,-2);
\end{tikzpicture}
\end{center}
gives a well-covering pair $(C,\tau)$ with
\[ C=\Big\{\Big(\big((i\hphantom{a}i+1),\mathsf{1}\big).B/B,\hat{w}(k\hphantom{a}k+1)\hat{w}_0.\hat{B}/\hat{B}\Big)\Big\}. \]
\end{theo}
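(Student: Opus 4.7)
The plan is to observe that Theorem \ref{resultat cardinal 1} is essentially a geometric reformulation of the previous theorem (established just above, using Proposition \ref{condition dominant pair} and the Schubert-condition computation). The main work has already been done; what remains is to translate between the combinatorial ``configuration in the order matrix'' language and the purely group-theoretic statement about $\hat{w}$ and adjacent simple transpositions.

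First I would unpack how $\hat{w}\in\hat{W}\simeq\gs_{n_1n_2}$ is read off from the order matrix. By the construction recalled just before Proposition \ref{condition dominant pair}, the entry $k$ at position $(i,j)$ in the order matrix is precisely the statement $\hat{w}(k)=(i,j)$ under the identification $\llbracket 1,n_1\rrbracket\times\llbracket 1,n_2\rrbracket\simeq\llbracket 1,n_1n_2\rrbracket$ of Footnote \ref{footnote ordre}. Hence the horizontal configuration (entries $k$ and $k+1$ in row $i$, columns $j$ and $j+1$) translates exactly to $\hat{w}(k)=(i,j)$, $\hat{w}(k+1)=(i,j+1)$; similarly the vertical configuration gives $\hat{w}(k)=(i,j)$, $\hat{w}(k+1)=(i+1,j)$.

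Next I would invoke the preceding theorem, which treats precisely these two cases and yields a well-covering pair $(C,\tau)$ with
\[ C=\Big\{\Big(\big(\mathsf{1},(j\hphantom{a}j+1)\big).B/B,\hat{w}(k\hphantom{a}k+1)\hat{w}_0.\hat{B}/\hat{B}\Big)\Big\} \]
in the first case and the analogous $C$ with $(i\hphantom{a}i+1)$ acting on the $G_1$-factor in the second. Since $\tau$ is dominant, regular, and $\hat{G}$-regular by hypothesis, the singleton $C$ is an irreducible component of $X^\tau$ (cf. Remark \ref{remark_dominant_regular_case}), and Theorem \ref{result_tau_dominant} then applies directly: $\mathcal{F}(C)$ is a regular face of $\mathrm{PKron}_{n_1,n_2}$ of minimal dimension $n_1n_2$ containing only stable triples.

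Finally, the description
\[ \mathcal{F}(C)=\big\{(\alpha,\beta,\gamma)\in\mathrm{PKron}_{n_1,n_2}\text{ s.t. }T\text{ acts trivially on the fibre of }\Lb_{\alpha,\beta,\gamma}\text{ over }C\big\} \]
is just the equivalence stated in Remark \ref{remark_dominant_regular_case}: for a singleton $C=\{x_0\}$ with $G^\tau=T$, having $\mu^{\Lb_{\alpha,\beta,\gamma}}(C,\sigma)=0$ for all $\sigma\in X_*(T)$ is equivalent to $T$ acting trivially on $(\Lb_{\alpha,\beta,\gamma})_{x_0}$, which is itself equivalent to $(\alpha,\beta,\gamma)\in\mathcal{F}(C)$. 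There is no real obstacle here; the only ``content'' is the dictionary between order-matrix configurations and adjacent transpositions in $\hat{w}$, and this is essentially a direct reading of the construction. The nontrivial geometric input (dominance via Proposition \ref{condition dominant pair}, then coveringness via the Schubert calculation, then well-coveringness via Lemma \ref{lemma_covering_well}) has already been carried out in the proof of the previous theorem.
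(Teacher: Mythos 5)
Your proposal is correct and follows exactly the paper's own route: the paper explicitly presents Theorem \ref{resultat cardinal 1} as a reformulation of the immediately preceding theorem, using the same dictionary you identify (rank $k$ at position $(i,j)$ in the order matrix $\Leftrightarrow$ $\hat{w}(k)=(i,j)$), and then lets Theorem \ref{result_tau_dominant} and Remark \ref{remark_dominant_regular_case} supply the conclusions about $\mathcal{F}(C)$. The only small gloss in your write-up is the last step, where ``$T$ acts trivially on the fibre $\Leftrightarrow$ $(\alpha,\beta,\gamma)\in\mathcal{F}(C)$'' for triples in $\mathrm{PKron}_{n_1,n_2}$ actually uses the well-covering isomorphism $\h^0(X,\Lb^{\otimes N})^G\simeq\h^0(C,\Lb^{\otimes N}|_C)^{T}$ together with torsion-freeness of the character lattice, but this is precisely the content already packaged into Remark \ref{remark_dominant_regular_case} and Theorem \ref{result_tau_dominant}, so there is no real gap.
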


\paragraph{Example:} The order matrix
\[ \begin{pmatrix}
\text{\ding{192}}&\text{\ding{194}}\\
\text{\ding{193}}&\text{\ding{195}}
\end{pmatrix} \]
which comes for instance from the one-parameter subgroup
\[ \tau:t\longmapsto(\begin{pmatrix}
t&\\
&1
\end{pmatrix},\begin{pmatrix}
t^2&\\
&1
\end{pmatrix}) \]
and corresponds to $\hat{w}=(2\hphantom{a}3)\in\gs_4\simeq\hat{W}$, gives two different well-covering pairs according to the previous theorem:
\begin{itemize}
\item one with $C_1=\Big\{\Big(\big((1\hphantom{a}2),\mathsf{1}\big).B/B,\dps\underset{=(1\hphantom{a}4\hphantom{a}3)}{\underbrace{\hat{w}(1\hphantom{a}2)\hat{w}_0}}.\hat{B}/\hat{B}\Big)\Big\}$,
\item the other with $C_2=\Big\{\Big(\big((1\hphantom{a}2),\mathsf{1}\big).B/B,\dps\underset{=(1\hphantom{a}2\hphantom{a}4)}{\underbrace{\hat{w}(3\hphantom{a}4)\hat{w}_0}}.\hat{B}/\hat{B}\Big)\Big\}$.
\end{itemize}
It is not difficult to see that these cannot be obtained by Theorem \ref{thm_length_zero}: with Lemma \ref{lemma_uniqueness} in mind, we can ``normalise'' these well-covering pairs by action of $G$ so that $C$ is of the form $\{(B/B,\hat{u}\hat{B}/\hat{B})\}$.
\begin{itemize}
\item $\big((1\hphantom{a}2),\mathsf{1}\big).C_1=\Big\{\big(B/B,(1\hphantom{a}2\hphantom{a}4).\hat{B}/\hat{B}\big)\Big\}$,
\item $\big((1\hphantom{a}2),\mathsf{1}\big).C_2=\Big\{\big(B/B,(1\hphantom{a}4\hphantom{a}3).\hat{B}/\hat{B}\big)\Big\}$.
\end{itemize}
Then we see that these $\hat{u}$ cannot be a $\hat{w}\hat{w}_0$, for a $\hat{w}$ coming from an additive matrix, which is what one would get using Theorem \ref{thm_length_zero}. Hence, thanks to Lemma \ref{lemma_uniqueness}, these two examples give two new faces -- $\mathcal{F}(C_1)$ and $\mathcal{F}(C_2)$ -- of the Kronecker cone $\mathrm{PKron}_{2,2}$ which contain only stable triples. The equations of the subspaces spanned by these faces in $\big\{(\alpha,\beta,\gamma)\text{ s.t. }|\alpha|=|\beta|=|\gamma|, \; \ell(\alpha)\leq 2, \; \ell(\beta)\leq 2, \; \ell(\gamma)\leq 4\big\}$ are easy to write:
\begin{itemize}
\item $\left\lbrace\dps\begin{array}{l}
\alpha_1=\gamma_1+\gamma_4\\
\beta_1=\gamma_1+\gamma_2
\end{array}\right.$ for $\mathcal{F}(C_1)$,
\item $\left\lbrace\dps\begin{array}{l}
\alpha_1=\gamma_2+\gamma_3\\
\beta_1=\gamma_1+\gamma_2
\end{array}\right.$ for $\mathcal{F}(C_2)$.
\end{itemize}

\subsection{Third case: length 2}\label{cas cardinal 2}

We follow exactly the same reasoning as in the second case, and keep the same notations. We assume here that $\sharp\Phi(v^{-1})=2$, i.e. $v^{-1}=s_\alpha s_\beta$, for $\alpha$ and $\beta$ distinct simple roots of $G$. If $s_\alpha$ and $s_\beta$ commute, one then has $\Phi(v^{-1})=\{-\alpha,-\beta\}$. If not, $\Phi(v^{-1})=\{-\alpha,-\alpha-\beta\}$. According to the different possibilities there are for $\alpha$ and $\beta$, we then have seven different types for $\Phi(v^{-1})$:
\begin{enumerate}
\item\label{cas 1} $\Phi(v^{-1})=\{\varepsilon_{i+1}-\varepsilon_i,\eta_{j+1}-\eta_j\}$, with $i\in\llbracket 1,n_1-1\rrbracket$, $j\in\llbracket 1,n_2-1\rrbracket$,
\item\label{cas 2} $\Phi(v^{-1})=\{\varepsilon_{i+1}-\varepsilon_i,\varepsilon_{j+1}-\varepsilon_j\}$, with $i,j\in\llbracket 1,n_1-1\rrbracket$ and $|i-j|\geq 2$,
\item\label{cas 3} $\Phi(v^{-1})=\{\eta_{i+1}-\eta_i,\eta_{j+1}-\eta_j\}$, with $i,j\in\llbracket 1,n_1-1\rrbracket$ and $|i-j|\geq 2$,
\item\label{cas 4} $\Phi(v^{-1})=\{\varepsilon_{i+1}-\varepsilon_i,\varepsilon_{i+2}-\varepsilon_i\}$, with $i\in\llbracket 1,n_1-2\rrbracket$,
\item\label{cas 5} $\Phi(v^{-1})=\{\varepsilon_{i+2}-\varepsilon_{i+1},\varepsilon_{i+2}-\varepsilon_i\}$, with $i\in\llbracket 1,n_1-2\rrbracket$,
\item\label{cas 6} $\Phi(v^{-1})=\{\eta_{i+1}-\eta_i,\eta_{i+2}-\eta_i\}$, with $i\in\llbracket 1,n_2-2\rrbracket$,
\item\label{cas 7} $\Phi(v^{-1})=\{\eta_{i+2}-\eta_{i+1},\eta_{i+2}-\eta_i\}$, with $i\in\llbracket 1,n_2-2\rrbracket$.
\end{enumerate}
Then we must also have $((\hat{v}\hat{w})^\vee)^{-1}=s_{\hat{\alpha}}s_{\hat{\beta}}$, for $\hat{\alpha}$, $\hat{\beta}$ simple roots of $\hat{G}$. As before, this yields three kinds of $\hat{\Phi}\big(((\hat{v}\hat{w})^\vee)^{-1}\big)$:
\begin{enumerate}
\item[(a)] $\hat{\Phi}\big(((\hat{v}\hat{w})^\vee)^{-1}\big)=\{\hat{\varepsilon}_{k+1}-\hat{\varepsilon}_k,\hat{\varepsilon}_{k'+1}-\hat{\varepsilon}_{k'}\}$, with $k,k'\in\llbracket 1,n_1n_2-1\rrbracket$ and $|k-k'|\geq 2$,
\item[(b)] $\hat{\Phi}\big(((\hat{v}\hat{w})^\vee)^{-1}\big)=\{\hat{\varepsilon}_{k+1}-\hat{\varepsilon}_k,\hat{\varepsilon}_{k+2}-\hat{\varepsilon}_k\}$, with $k\in\llbracket 1,n_1n_2-2\rrbracket$,
\item[(c)] $\hat{\Phi}\big(((\hat{v}\hat{w})^\vee)^{-1}\big)=\{\hat{\varepsilon}_{k+2}-\hat{\varepsilon}_{k+1},\hat{\varepsilon}_{k+2}-\hat{\varepsilon}_k\}$, with $k\in\llbracket 1,n_1n_2-2\rrbracket$.
\end{enumerate}
And finally some of the cases 1 to 7 are compatible with some of the cases (a) to (c), and will give -- as in Paragraph \ref{cas cardinal 1} -- configurations (concerning $\hat{w}$) providing $v^{-1}$ and $\hat{v}^{-1}$ which verify the condition from Proposition \ref{condition dominant pair}. After removing those which are not possible for a $\hat{w}$ coming from a dominant, regular, $\hat{G}$-regular one-parameter subgroup (i.e. coming from an additive matrix), we obtain the following:

\vspace{5mm}

\noindent\textbf{Configuration \circled{A}} (corresponding to cases \ref{cas 1} and (a)):\\
There exist $k,k'\in\llbracket 1,n_1n_2-1\rrbracket$ such that $|k-k'|\geq 2$ and
\[ \left\lbrace\begin{array}{l}
\hat{w}(k)=(i,j)\\
\hat{w}(k+1)=(i+1,j)\\
\hat{w}(k')=(i',j')\\
\hat{w}(k'+1)=(i',j'+1)
\end{array}\right.. \]
It corresponds to the following situation in the order matrix:
\begin{center}
\begin{tikzpicture}
\node (un) at (0,0) {$k'$};
\draw (un) circle (5mm);
\node (deux) at (1,0) {$k'+1$};
\draw (deux) circle (5mm);
\node (trois) at (4,0) {row $i'$};
\draw[->] (trois) -- (2,0);
\node (quatre) at (0,-4) {$j'$};
\draw[->] (quatre) -- (0,-3);
\node (cinq) at (1,-4) {$j'+1$};
\draw[->] (cinq) -- (1,-3);
\node (six) at (-1,-1) {$k$};
\draw (six) circle (5mm);
\node (sept) at (-1,-2) {$k+1$};
\draw (sept) circle (5mm);
\node (huit) at (4,-1) {row $i$};
\node (neuf) at (4,-2) {row $i+1$};
\draw[->] (huit) -- (2,-1);
\draw[->] (neuf) -- (2,-2);
\node (dix) at (-1,-4) {$j$};
\draw[->] (dix) -- (-1,-3);
\node at (5,-3.5) {($k,k+1,k',k'+1$ pairwise distinct)};
\node at (-2,-4) {columns};
\end{tikzpicture}
\end{center}
This gives:
\[ v^{-1}=\big((i\hphantom{a}i+1),(j'\hphantom{a}j'+1)\big)\quad\text{and}\quad\hat{v}^{-1}=\hat{w}(k\hphantom{a}k+1)(k'\hphantom{a}k'+1)\hat{w}_0 \]
(with $|k-k'|\geq 2$).

\vspace{5mm}

\noindent\textbf{Configuration \circled{B}} (cases \ref{cas 2} and (a)):\\
There exist $k,k'\in\llbracket 1,n_1n_2-1\rrbracket$ such that $|k-k'|\geq 2$ and
\[ \left\lbrace\begin{array}{l}
\hat{w}(k)=(i,j)\\
\hat{w}(k+1)=(i+1,j)\\
\hat{w}(k')=(i',j')\\
\hat{w}(k'+1)=(i'+1,j')
\end{array}\right., \]
with $|i-i'|\geq 2$. It corresponds to the following situation in the order matrix:
\begin{center}
\begin{tikzpicture}
\node (un) at (-2,2) {$k$};
\draw (un) circle (5mm);
\node (deux) at (-2,1) {$k+1$};
\draw (deux) circle (5mm);
\node (trois) at (3,2) {row $i$};
\draw[->] (trois) -- (1,2);
\node (quatre) at (3,1) {row $i+1$};
\draw[->] (quatre) -- (1,1);
\node (cinq) at (-2,-3) {column $j$};
\draw[->] (cinq) -- (-2,-2);
\node (six) at (0,0) {$k'$};
\draw (six) circle (5mm);
\node (sept) at (0,-1) {$k'+1$};
\draw (sept) circle (5mm);
\node (huit) at (3,0) {row $i'$};
\draw[->] (huit) -- (1,0);
\node (neuf) at (3,-1) {row $i'+1$};
\draw[->] (neuf) -- (1,-1);
\node (dix) at (0,-3) {column $j'$};
\draw[->] (dix) -- (0,-2);
\node at (4.5,-2.5) {($k,k+1,k',k'+1$ pairwise distinct)};
\node at (5.5,0.5) {(pairwise distinct)};
\end{tikzpicture}
\end{center}
This gives
\[ v^{-1}=\big((i\hphantom{a}i+1)(i'\hphantom{a}i'+1),\mathsf{1}\big)\quad\text{and}\quad\hat{v}^{-1}=\hat{w}(k\hphantom{a}k+1)(k'\hphantom{a}k'+1)\hat{w}_0 \]
(with $|i-i'|,|k-k'|\geq 2$).

\vspace{5mm}

\noindent\textbf{Configuration \circled{C}} (cases \ref{cas 1} and (b)):\\
There exists $k\in\llbracket 1,n_1n_2-2\rrbracket$ such that
\[ \left\lbrace\begin{array}{l}
\hat{w}(k)=(i,j)\\
\hat{w}(\{k+1,k+2\})=\{(i+1,j),(i,j+1)\}
\end{array}\right.. \]
It corresponds to two types of situation in the order matrix:
\begin{center}
\begin{tikzpicture}
\node (un) at (0,0) {$k$};
\draw (un) circle (5mm);
\node (deux) at (1,0) {$k+1$};
\draw (deux) circle (5mm);
\node (trois) at (0,-1) {$k+2$};
\draw (trois) circle (5mm);
\node (quatre) at (4,0) {row $i$};
\draw[->] (quatre) -- (2,0);
\node (cinq) at (4,-1) {row $i+1$};
\draw[->] (cinq) -- (2,-1);
\node (six) at (0,-3) {$j$};
\draw[->] (six) -- (0,-2);
\node (sept) at (1,-3) {$j+1$};
\draw[->] (sept) -- (1,-2);
\node at (-1,-3) {columns};
\end{tikzpicture}
\hspace{1cm}
\begin{tikzpicture}
\node (un) at (0,0) {$k$};
\draw (un) circle (5mm);
\node (deux) at (1,0) {$k+2$};
\draw (deux) circle (5mm);
\node (trois) at (0,-1) {$k+1$};
\draw (trois) circle (5mm);
\node (quatre) at (4,0) {row $i$};
\draw[->] (quatre) -- (2,0);
\node (cinq) at (4,-1) {row $i+1$};
\draw[->] (cinq) -- (2,-1);
\node (six) at (0,-3) {$j$};
\draw[->] (six) -- (0,-2);
\node (sept) at (1,-3) {$j+1$};
\draw[->] (sept) -- (1,-2);
\node at (-1,-3) {columns};
\end{tikzpicture}
\end{center}
And this gives
\[ v^{-1}=\big((i\hphantom{a}i+1),(j\hphantom{a}j+1)\big)\quad\text{and}\quad\hat{v}^{-1}=\hat{w}\underset{=(k\hphantom{a}k+1\hphantom{a}k+2)}{\underbrace{(k\hphantom{a}k+1)(k+1\hphantom{a}k+2)}}\hat{w}_0. \]

\vspace{5mm}

\noindent\textbf{Configuration \circled{D}} (cases \ref{cas 1} and (c)):\\
There exists $k\in\llbracket 1,n_1n_2-2\rrbracket$ such that
\[ \left\lbrace\begin{array}{l}
\hat{w}(\{k,k+1\})=\{(i,j+1),(i+1,j)\}\\
\hat{w}(k+2)=(i+1,j+1)
\end{array}\right.. \]
It corresponds to two types of situation in the order matrix:
\begin{center}
\begin{tikzpicture}
\node (un) at (1,-1) {$k+2$};
\draw (un) circle (5mm);
\node (deux) at (1,0) {$k$};
\draw (deux) circle (5mm);
\node (trois) at (0,-1) {$k+1$};
\draw (trois) circle (5mm);
\node (quatre) at (4,0) {row $i$};
\draw[->] (quatre) -- (2,0);
\node (cinq) at (4,-1) {row $i+1$};
\draw[->] (cinq) -- (2,-1);
\node (six) at (0,-3) {$j$};
\draw[->] (six) -- (0,-2);
\node (sept) at (1,-3) {$j+1$};
\draw[->] (sept) -- (1,-2);
\node at (-1,-3) {columns};
\end{tikzpicture}
\hspace{1cm}
\begin{tikzpicture}
\node (un) at (1,-1) {$k+2$};
\draw (un) circle (5mm);
\node (deux) at (1,0) {$k+1$};
\draw (deux) circle (5mm);
\node (trois) at (0,-1) {$k$};
\draw (trois) circle (5mm);
\node (quatre) at (4,0) {row $i$};
\draw[->] (quatre) -- (2,0);
\node (cinq) at (4,-1) {row $i+1$};
\draw[->] (cinq) -- (2,-1);
\node (six) at (0,-3) {$j$};
\draw[->] (six) -- (0,-2);
\node (sept) at (1,-3) {$j+1$};
\draw[->] (sept) -- (1,-2);
\node at (-1,-3) {columns};
\end{tikzpicture}
\end{center}
And this gives
\[ v^{-1}=\big((i\hphantom{a}i+1),(j\hphantom{a}j+1)\big)\quad\text{and}\quad\hat{v}^{-1}=\hat{w}\underset{=(k\hphantom{a}k+2\hphantom{a}k+1)}{\underbrace{(k+1\hphantom{a}k+2)(k\hphantom{a}k+1)}}\hat{w}_0. \]

\vspace{5mm}

\noindent\textbf{Configuration \circled{E}} (cases \ref{cas 4} and (b) on the one hand, and \ref{cas 5} and (c) on the other):\\
There exists $k\in\llbracket 1,n_1n_2-2\rrbracket$ such that
\[ \left\lbrace\begin{array}{l}
\hat{w}(k)=(i,j)\\
\hat{w}(k+1)=(i+1,j)\\
\hat{w}(k+2)=(i+2,j)
\end{array}\right.. \]
It corresponds to the following situation in the order matrix:
\begin{center}
\begin{tikzpicture}
\node (un) at (0,0) {$k$};
\draw (un) circle (5mm);
\node (deux) at (0,-1) {$k+1$};
\draw (deux) circle (5mm);
\node (trois) at (0,-2) {$k+2$};
\draw (trois) circle (5mm);
\node (quatre) at (3,0) {row $i$};
\draw[->] (quatre) -- (1,0);
\node (cinq) at (3,-1) {row $i+1$};
\draw[->] (cinq) -- (1,-1);
\node (six) at (3,-2) {row $i+2$};
\draw[->] (six) -- (1,-2);
\node (sept) at (0,-4) {column $j$};
\draw[->] (sept) -- (0,-3);
\end{tikzpicture}
\end{center}
And this configuration gives two different pairs $(v^{-1},\hat{v}^{-1})$:
\[ v^{-1}=\big((i\hphantom{a}i+1\hphantom{a}i+2),\mathsf{1}\big)\quad\text{and}\quad\hat{v}^{-1}=\hat{w}(k\hphantom{a}k+1\hphantom{a}k+2)\hat{w}_0, \]
and
\[ v^{-1}=\big((i\hphantom{a}i+2\hphantom{a}i+1),\mathsf{1}\big)\quad\text{and}\quad\hat{v}^{-1}=\hat{w}(k\hphantom{a}k+2\hphantom{a}k+1)\hat{w}_0. \]

\vspace{5mm}

Furthermore the Configurations \circled{B} and \circled{E} each have a ``transposed configuration'' in which the roles of the rows and columns are exchanged. Those two transposed configurations will be denoted respectively by \circled{b} and \circled{e}. They also give a pair $(v^{-1},\hat{v}^{-1})$ (or two, in the case of Configuration \circled{e}) in which the roles of $V_1$ and $V_2$ are exchanged. In other words, $\hat{v}^{-1}$ does not change and -- for instance -- $v^{-1}=\big((i\hphantom{a}i+1)(i'\hphantom{a}i'+1),\mathsf{1}\big)$ (for configuration \circled{B}) becomes $v^{-1}=\big(\mathsf{1},(j\hphantom{a}j+1)(j'\hphantom{a}j'+1)\big)$ (for configuration \circled{b}).

\begin{theo}\label{resultat cardinal 2}
Let $\tau$ be a dominant, regular, $\hat{G}$-regular one-parameter subgroup of $T$. Let
\[ C=\big\{(v^{-1}B/B,\hat{v}^{-1}\hat{B}/\hat{B})\big\}, \]
with $v^{-1}$ and $\hat{v}^{-1}$ coming from one of the Configurations {\normalfont\circled{A}} to {\normalfont\circled{E}} or one of their transposed configurations. Then the pair $(C,\tau)$ is dominant and, as a consequence, gives a face -- not necessarily regular and possibly reduced to zero -- $\mathcal{F}(C)$ of the Kronecker cone $\mathrm{PKron}_{n_1,n_2}$ which contains only almost stable triples.
\end{theo}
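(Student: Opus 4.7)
The plan is to apply Proposition \ref{condition dominant pair} to establish that each pair $(C,\tau)$ is dominant, and then to use Remark \ref{remark dominant} combined with the fact that $C$ is a singleton in order to deduce that $\mathcal{F}(C)$ contains only almost stable triples.

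First I would reduce the almost stability to the dominance of the pair. Since $\tau$ is dominant, regular and $\hat{G}$-regular, $C=\{x_0\}$ is a singleton and $G^\tau=T$ (see Remark \ref{remark_dominant_regular_case}). For any triple $(\alpha,\beta,\gamma)\in\mathcal{F}(C)$ one has $\mu^{\Lb_{\alpha,\beta,\gamma}}(C,\tau)=0$, and the same equality holds for every tensor power. Applying Remark \ref{remark dominant} to $\Lb_{\alpha,\beta,\gamma}^{\otimes d}$ yields an injection
\[ \h^0(X,\Lb_{\alpha,\beta,\gamma}^{\otimes d})^G\hookrightarrow\h^0(C,\left.\Lb_{\alpha,\beta,\gamma}^{\otimes d}\right|_C)^T. \]
The right-hand side sits inside the one-dimensional fibre $(\Lb_{\alpha,\beta,\gamma}^{\otimes d})_{x_0}$ and is therefore of dimension at most $1$, so $g_{d\alpha,d\beta,d\gamma}\leq 1$ for all $d\in\Z_{>0}$, which is exactly the almost stability condition.

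The bulk of the proof is thus the verification, configuration by configuration, of the hypothesis of Proposition \ref{condition dominant pair}: $\rho\bigl(\hat{w}\hat{\Phi}\bigl(((\hat{v}\hat{w})^\vee)^{-1}\bigr)\bigr)\subset\Phi(v^{-1})$ together with bijectivity. The key identity is $\rho(\hat{\varepsilon}_{(i,j)})=\varepsilon_i+\eta_j$, which gives
\[ \rho(\hat{\varepsilon}_{(i',j')}-\hat{\varepsilon}_{(i,j)})=(\varepsilon_{i'}-\varepsilon_i)+(\eta_{j'}-\eta_j); \]
this difference lies in $\Phi$ precisely when $i=i'$ (equal to $\eta_{j'}-\eta_j$) or $j=j'$ (equal to $\varepsilon_{i'}-\varepsilon_i$). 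Since the sets $\hat{\Phi}\bigl(((\hat{v}\hat{w})^\vee)^{-1}\bigr)$ and $\Phi(v^{-1})$ have already been identified (cases \ref{cas 1}--\ref{cas 7} and (a)--(c)), it suffices to apply $\hat{w}$ using the explicit data of the configuration. For instance, in Configuration \circled{A}, $\hat{w}$ sends $\{\hat{\varepsilon}_{k+1}-\hat{\varepsilon}_k,\hat{\varepsilon}_{k'+1}-\hat{\varepsilon}_{k'}\}$ to $\{\hat{\varepsilon}_{(i+1,j)}-\hat{\varepsilon}_{(i,j)},\hat{\varepsilon}_{(i',j'+1)}-\hat{\varepsilon}_{(i',j')}\}$, whose $\rho$-image is $\{\varepsilon_{i+1}-\varepsilon_i,\eta_{j'+1}-\eta_{j'}\}$ -- exactly $\Phi(v^{-1})$ for $v^{-1}=\big((i\hphantom{a}i+1),(j'\hphantom{a}j'+1)\big)$. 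Configurations \circled{B}--\circled{E} are treated in the same way.

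The main obstacle is simply the bookkeeping of the various sub-cases rather than any conceptual difficulty. Configurations \circled{C} and \circled{D} each split into two sub-arrangements, depending on which of the two ``off-diagonal'' positions contains the smaller ranking; Configuration \circled{E} produces two distinct pairs $(v^{-1},\hat{v}^{-1})$ from the same pattern, requiring the verifications for cases \ref{cas 4}-(b) and \ref{cas 5}-(c) to be carried out separately. Finally, the transposed configurations \circled{b} and \circled{e} are handled at once by the symmetry of the whole construction exchanging the roles of $V_1$ and $V_2$, which swaps the characters $\varepsilon_i$ with $\eta_j$ and corresponds to transposing the order matrix.
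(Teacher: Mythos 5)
Your proof is correct and takes essentially the same route as the paper. The paper does not isolate a separate proof environment for Theorem \ref{resultat cardinal 2}: the argument is embedded in the preceding text, where the Configurations \circled{A}--\circled{E} are \emph{defined} as precisely the compatible combinations of cases \ref{cas 1}--\ref{cas 7} and (a)--(c) that make the hypothesis of Proposition \ref{condition dominant pair} hold, and the ``almost stable'' conclusion is delegated to the remark following Corollary \ref{result_stable_triple}. You make that implicit reasoning explicit in two places: the reduction of almost stability to dominance via Remark \ref{remark dominant} together with $G^\tau=T$ and $C=\{x_0\}$ a single point, and the configuration-by-configuration verification of the root-restriction condition using $\rho(\hat{\varepsilon}_{(i,j)})=\varepsilon_i+\eta_j$; both steps are accurate. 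One small imprecision: for Configurations \circled{C} and \circled{D} the two ``sub-arrangements'' in the order matrix produce the \emph{same} pair $(v^{-1},\hat{v}^{-1})$, so they are not separate cases to verify, merely two pictures of the same combinatorial situation -- the genuine duplication (two distinct $\hat{v}^{-1}$ from one pattern) occurs only in Configuration \circled{E} and its transpose \circled{e}, as the paper's Remark \ref{rmk_2_config} emphasises.
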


\begin{rmk}\label{rmk_2_config}
As we wrote, the two configurations
\begin{center}
\begin{tikzpicture}
\node (un) at (0,0) {$k$};
\draw (un) circle (5mm);
\node (deux) at (0,-1) {$k+1$};
\draw (deux) circle (5mm);
\node (trois) at (0,-2) {$k+2$};
\draw (trois) circle (5mm);
\node (quatre) at (3,0) {row $i$};
\draw[->] (quatre) -- (1,0);
\node (cinq) at (3,-1) {row $i+1$};
\draw[->] (cinq) -- (1,-1);
\node (six) at (3,-2) {row $i+2$};
\draw[->] (six) -- (1,-2);
\node (sept) at (0,-4) {column $j$};
\draw[->] (sept) -- (0,-3);
\end{tikzpicture}
\hspace{2cm}
\begin{tikzpicture}
\node (un) at (0,0) {$k$};
\draw (un) circle (5mm);
\node (deux) at (1,0) {$k+1$};
\draw (deux) circle (5mm);
\node (trois) at (2,0) {$k+2$};
\draw (trois) circle (5mm);
\node (quatre) at (5,0) {row $i$};
\draw[->] (quatre) -- (3,0);
\node (cinq) at (0,-2) {$j$};
\draw[->] (cinq) -- (0,-1);
\node (six) at (1,-2) {$j+1$};
\draw[->] (six) -- (1,-1);
\node (sept) at (2,-2) {$j+2$};
\draw[->] (sept) -- (2,-1);
\end{tikzpicture}
\end{center}
(\circled{E} for the former and \circled{e} for the latter) each give two -- a priori different -- dominant pairs . We will see later (cf Paragraph \ref{exemple_3x2}) that such configurations can indeed give two different faces of $\mathrm{PKron}_{n_1,n_2}$.
\end{rmk}

\paragraph{Example:} When one applies Theorem \ref{resultat cardinal 2} to the same order matrix $\begin{pmatrix}
\text{\ding{192}}&\text{\ding{194}}\\
\text{\ding{193}}&\text{\ding{195}}
\end{pmatrix}$ as in Paragraph \ref{cas cardinal 1}, one gets two new dominant pairs: a Configuration \circled{C} and a Configuration \circled{D} can be observed, which give respectively
\[ C_3=\Big\{\Big(\big((1\hphantom{a}2),(1\hphantom{a}2)\big).B/B,\dps\underset{=(1\hphantom{a}4\hphantom{a}3\hphantom{a}2)}{\underbrace{\hat{w}(1\hphantom{a}2\hphantom{a}3)\hat{w}_0}}.\hat{B}/\hat{B}\Big)\Big\} \]
and
\[ C_4=\Big\{\Big(\big((1\hphantom{a}2),(1\hphantom{a}2)\big).B/B,\dps\underset{=(1\hphantom{a}2\hphantom{a}3\hphantom{a}4)}{\underbrace{\hat{w}(2\hphantom{a}4\hphantom{a}3)\hat{w}_0}}.\hat{B}/\hat{B}\Big)\Big\}. \]
Once again we can normalise these $C$'s:
\begin{itemize}
\item $\big((1\hphantom{a}2),(1\hphantom{a}2)\big).C_3=\Big\{\big(B/B,(2\hphantom{a}4).\hat{B}/\hat{B}\big)\Big\}$,
\item $\big((1\hphantom{a}2),(1\hphantom{a}2)\big).C_4=\Big\{\big(B/B,(1\hphantom{a}3).\hat{B}/\hat{B}\big)\Big\}$.
\end{itemize}
The equations defining the subspaces spanned by the corresponding faces -- possibly reduced to zero -- $\mathcal{F}(C_3)$ and $\mathcal{F}(C_4)$ of the Kronecker cone $\mathrm{PKron}_{2,2}$ are respectively:
\[ \left\lbrace\begin{array}{l}
\alpha_1=\gamma_1+\gamma_4\\
\beta_1=\gamma_2+\gamma_4
\end{array}\right.\quad\text{and}\quad\left\lbrace\begin{array}{l}
\alpha_1=\gamma_2+\gamma_3\\
\beta_1=\gamma_2+\gamma_4
\end{array}\right.. \]
One can then check that $\mathcal{F}(C_3)$ and $\mathcal{F}(C_4)$ are indeed not reduced to zero: \linebreak$\big((5,5),(5,5),(3,3,2,2)\big)\in\mathcal{F}(C_3)\cap\mathcal{F}(C_4)$ (it is really a non-stable almost stable triple). But in fact, $\mathcal{F}(C_3)$ and $\mathcal{F}(C_4)$ are equal: the equations of the subspace that they span can be rewritten as
\[ \left\lbrace\begin{array}{l}
\gamma_1=\gamma_2\\
\gamma_3=\gamma_4\\
\alpha_1=\gamma_1+\gamma_3\\
\beta_1=\gamma_1+\gamma_3
\end{array}\right.. \]
So we have actually found only one face of $\mathrm{PKron}_{2,2}$, which is not regular and contains only almost stable triples.

\section{Application to all cases of size 2x2, 3x2, and 3x3}\label{application_small_examples}

\subsection{All order matrices of size 2x2}

For a dominant, regular, $\hat{G}$-regular one-parameter subgroup $\tau$ of $T$, there are only two possible order matrices (i.e. types of additive matrices) in this case:
\[ \begin{pmatrix}
\text{\ding{192}}&\text{\ding{193}}\\
\text{\ding{194}}&\text{\ding{195}}
\end{pmatrix}\qquad\text{and}\qquad\begin{pmatrix}
\text{\ding{192}}&\text{\ding{194}}\\
\text{\ding{193}}&\text{\ding{195}}
\end{pmatrix}, \]
corresponding -- for instance -- respectively to the one-parameter subgroups
\[ \tau_1:t\mapsto(\begin{pmatrix}
t^2&\\
&1
\end{pmatrix},\begin{pmatrix}
t&\\
&1
\end{pmatrix})\qquad\text{and}\qquad\tau_2:t\mapsto(\begin{pmatrix}
t&\\
&1
\end{pmatrix},\begin{pmatrix}
t^2&\\
&1
\end{pmatrix}), \]
which we will from now on denote by $\tau_1=(2,0|1,0)$ and $\tau_2=(1,0|2,0)$. Each one of these order matrices gives one face of $\mathrm{PKron}_{2,2}$, that we will call ``additive'', coming from the result of Manivel and Vallejo (i.e. Theorem \ref{thm_length_zero}). They are respectively given by
\[ C^{(1)}_0=\Big\{\big(B/B,(1\hphantom{a}4)(2\hphantom{a}3)\hat{B}/\hat{B}\big)\Big\} \]
and
\[ C^{(2)}_0=\Big\{\big(B/B,(1\hphantom{a}4)\hat{B}/\hat{B}\big)\Big\}. \]
Then Theorems \ref{resultat cardinal 1} and \ref{resultat cardinal 2} enable us to find other faces (all those coming from the first order matrix will be denoted with an exponent $(1)$ and all others with an exponent $(2)$). While giving them, we will at the same time ``normalise'' the singletons $C$'s as we did in the previous examples: they will all have the form $\big\{(B/B,\hat{u}\hat{B}/\hat{B})\big\}$ with $\hat{u}\in\hat{W}$. (It allows to apply Lemma \ref{lemma_uniqueness} when the pairs are well-covering).

\vspace{5mm}

Theorem \ref{resultat cardinal 1} applied to the first possible order matrix gives two well-covering pairs, with
\[ \begin{array}{l}
C^{(1)}_1=\Big\{\big(B/B,(1\hphantom{a}4\hphantom{a}2\hphantom{a}3)\hat{B}/\hat{B}\big)\Big\},\\
C^{(1)}_2=\Big\{\big(B/B,(1\hphantom{a}3\hphantom{a}2\hphantom{a}4)\hat{B}/\hat{B}\big)\Big\}.
\end{array} \]
Theorem \ref{resultat cardinal 2} gives two dominant ones, with
\[ \begin{array}{l}
C^{(1)}_3=\Big\{\big(B/B,(2\hphantom{a}4\hphantom{a}3)\hat{B}/\hat{B}\big)\Big\},\\
C^{(1)}_4=\Big\{\big(B/B,(1\hphantom{a}2\hphantom{a}3)\hat{B}/\hat{B}\big)\Big\}.
\end{array} \]
Let us do the same for the second possible order matrix (it is actually what we did in the examples of the previous section). With Theorem \ref{resultat cardinal 1}:
\[ \begin{array}{l}
C^{(2)}_1=\Big\{\big(B/B,(1\hphantom{a}2\hphantom{a}4)\hat{B}/\hat{B}\big)\Big\},\\
C^{(2)}_2=\Big\{\big(B/B,(1\hphantom{a}4\hphantom{a}3)\hat{B}/\hat{B}\big)\Big\}.
\end{array} \]
And with Theorem \ref{resultat cardinal 2}:
\[ \begin{array}{l}
C^{(2)}_3=\Big\{\big(B/B,(2\hphantom{a}4)\hat{B}/\hat{B}\big)\Big\},\\
C^{(2)}_4=\Big\{\big(B/B,(1\hphantom{a}3)\hat{B}/\hat{B}\big)\Big\}.
\end{array} \]
These examples being small, it is then not difficult to look in details at every possibly non-regular face (i.e. those coming from Theorem \ref{resultat cardinal 2}). What we find is that these four dominant pairs actually define all the same non-regular and non-zero face of $\mathrm{PKron}_{2,2}$. As before, the subspace of $\big\{(\alpha,\beta,\gamma)\text{ s.t. }|\alpha|=|\beta|=|\gamma|, \; \ell(\alpha)\leq 2, \; \ell(\beta)\leq 2, \; \ell(\gamma)\leq 4\big\}$ spanned by this face has the following equations:
\[ \left\lbrace\begin{array}{l}
\gamma_1=\gamma_2\\
\gamma_3=\gamma_4\\
\alpha_1=\gamma_1+\gamma_3\\
\beta_1=\gamma_1+\gamma_3
\end{array}\right.. \]
In total, we have then found 4 new distinct (by Lemma \ref{lemma_uniqueness}) regular faces of $\mathrm{PKron}_{2,2}$ which contain only stable triples, whereas 2 others were already known. And we have also found 1 non-regular face containing only almost stable triples.

\subsection{All order matrices of size 3x2}\label{exemple_3x2}

Let us do exactly as in the previous case. Five order matrices are possible here:
\[ \begin{pmatrix}
\text{\ding{192}}&\text{\ding{193}}\\
\text{\ding{194}}&\text{\ding{195}}\\
\text{\ding{196}}&\text{\ding{197}}
\end{pmatrix},\qquad\begin{pmatrix}
\text{\ding{192}}&\text{\ding{195}}\\
\text{\ding{193}}&\text{\ding{196}}\\
\text{\ding{194}}&\text{\ding{197}}
\end{pmatrix},\qquad\begin{pmatrix}
\text{\ding{192}}&\text{\ding{193}}\\
\text{\ding{194}}&\text{\ding{196}}\\
\text{\ding{195}}&\text{\ding{197}}
\end{pmatrix},\qquad\begin{pmatrix}
\text{\ding{192}}&\text{\ding{194}}\\
\text{\ding{193}}&\text{\ding{195}}\\
\text{\ding{196}}&\text{\ding{197}}
\end{pmatrix},\qquad\text{and}\qquad\begin{pmatrix}
\text{\ding{192}}&\text{\ding{194}}\\
\text{\ding{193}}&\text{\ding{196}}\\
\text{\ding{195}}&\text{\ding{197}}
\end{pmatrix}. \]
We number them in that order from 1 to 5 and will denote accordingly some possible corresponding one-parameter subgroups:
\vspace{-5mm}
\begin{changemargin}{-1mm}{-1mm}
\[ \tau_1=(4,2,0|1,0),\;\tau_2=(2,1,0|3,0),\;\tau_3=(4,1,0|2,0),\;\tau_4=(4,3,0|2,0),\;\tau_5=(4,2,0|3,0). \]
\end{changemargin}
We then have 5 additive faces with:
\[ \begin{array}{l}
C^{(1)}_0=\Big\{\big(B/B,(1\hphantom{a}6)(2\hphantom{a}5)(3\hphantom{a}4)\hat{B}/\hat{B}\big)\Big\},\\
C^{(2)}_0=\Big\{\big(B/B,(1\hphantom{a}6)(2\hphantom{a}4\hphantom{a}5\hphantom{a}3)\hat{B}/\hat{B}\big)\Big\},\\
C^{(3)}_0=\Big\{\big(B/B,(1\hphantom{a}6)(2\hphantom{a}4\hphantom{a}3\hphantom{a}5)\hat{B}/\hat{B}\big)\Big\},\\
C^{(4)}_0=\Big\{\big(B/B,(1\hphantom{a}6)(2\hphantom{a}5\hphantom{a}3\hphantom{a}4)\hat{B}/\hat{B}\big)\Big\},\\
C^{(5)}_0=\Big\{\big(B/B,(1\hphantom{a}6)(2\hphantom{a}4)(3\hphantom{a}5)\hat{B}/\hat{B}\big)\Big\}.
\end{array} \]
Theorem \ref{resultat cardinal 1} furthermore gives 15 well-covering pairs: since we normalise them by writing $C$ as $\big\{(B/B,\hat{u}\hat{B}/\hat{B})\big\}$, we give in the following table the list of elements $\hat{u}$ obtained, together with the name of the singleton $C$ that they give.
\[ \begin{array}{c|c}
\text{name of }C & \text{element }\hat{u}\text{ giving }C\\
\hline
C^{(1)}_1 & (1\hphantom{a}5\hphantom{a}2\hphantom{a}6)\\
C^{(1)}_2 & (1\hphantom{a}5)(2\hphantom{a}6)(3\hphantom{a}4)\\
C^{(1)}_3 & (1\hphantom{a}6\hphantom{a}2\hphantom{a}5)\\
C^{(2)}_1 & (1\hphantom{a}6)(3\hphantom{a}4\hphantom{a}5)\\
C^{(2)}_2 & (1\hphantom{a}6\hphantom{a}3\hphantom{a}2\hphantom{a}4\hphantom{a}5)\\
C^{(2)}_3 & (1\hphantom{a}4\hphantom{a}5\hphantom{a}3\hphantom{a}2\hphantom{a}6)\\
C^{(2)}_4 & (1\hphantom{a}6)(2\hphantom{a}4\hphantom{a}3)\\
C^{(3)}_1 & (1\hphantom{a}5\hphantom{a}2\hphantom{a}3\hphantom{a}6)\\
C^{(3)}_2 & (1\hphantom{a}4\hphantom{a}3\hphantom{a}5\hphantom{a}2\hphantom{a}6)\\
C^{(3)}_3 & (1\hphantom{a}6)(2\hphantom{a}4\hphantom{a}5)\\
C^{(4)}_1 & (1\hphantom{a}6)(2\hphantom{a}5\hphantom{a}3)\\
C^{(4)}_2 & (1\hphantom{a}6\hphantom{a}3\hphantom{a}4\hphantom{a}2\hphantom{a}5)\\
C^{(4)}_3 & (1\hphantom{a}6\hphantom{a}2\hphantom{a}5\hphantom{a}4)\\
C^{(5)}_1 & (1\hphantom{a}6)(3\hphantom{a}5)\\
C^{(5)}_2 & (1\hphantom{a}6)(2\hphantom{a}4)
\end{array} \]
(The numbers written in exponent between parentheses indicate which order matrix the considered well-covering pairs come from.) Theorem \ref{resultat cardinal 2} gives also 20 dominant pairs, written in the same kind of table:
\[ \begin{array}{c|c}
\text{name of }C & \text{element }\hat{u}\text{ giving }C\\
\hline
C^{(1)}_4 & (1\hphantom{a}5\hphantom{a}2\hphantom{a}6\hphantom{a}3)\\
C^{(1)}_5 & (1\hphantom{a}3\hphantom{a}4\hphantom{a}5)(2\hphantom{a}6)\\
C^{(1)}_6 & (1\hphantom{a}5)(2\hphantom{a}6\hphantom{a}4\hphantom{a}3),\\
C^{(1)}_7 & (1\hphantom{a}4\hphantom{a}6\hphantom{a}2\hphantom{a}5)\\
C^{(2)}_5 & (1\hphantom{a}4\hphantom{a}5\hphantom{a}3\hphantom{a}6)\\
C^{(2)}_6 & (1\hphantom{a}2\hphantom{a}6)(3\hphantom{a}4\hphantom{a}5)\\
C^{(2)}_7 & (1\hphantom{a}6\hphantom{a}5)(2\hphantom{a}4\hphantom{a}3)\\
C^{(2)}_8 & (1\hphantom{a}6\hphantom{a}3\hphantom{a}2\hphantom{a}4)\\
C^{(3)}_4 & (1\hphantom{a}5)(2\hphantom{a}4\hphantom{a}6)\\
C^{(3)}_5 & (1\hphantom{a}5\hphantom{a}2)(3\hphantom{a}6)\\
C^{(3)}_6 & (1\hphantom{a}4\hphantom{a}6\hphantom{a}2\hphantom{a}3\hphantom{a}5)\\
C^{(3)}_7 & (1\hphantom{a}3\hphantom{a}6)(2\hphantom{a}5)\\
C^{(3)}_8 & (1\hphantom{a}5\hphantom{a}2\hphantom{a}3\hphantom{a}4\hphantom{a}6)\\
C^{(4)}_4 & (1\hphantom{a}6\hphantom{a}2\hphantom{a}5\hphantom{a}4\hphantom{a}3)\\
C^{(4)}_5 & (1\hphantom{a}6\hphantom{a}4)(2\hphantom{a}5)\\
C^{(4)}_6 & (1\hphantom{a}5\hphantom{a}3)(2\hphantom{a}6)\\
C^{(4)}_7 & (1\hphantom{a}5)(2\hphantom{a}6\hphantom{a}4)\\
C^{(4)}_8 & (1\hphantom{a}4)(2\hphantom{a}5\hphantom{a}6)\\
C^{(5)}_3 & (1\hphantom{a}5\hphantom{a}6\hphantom{a}2\hphantom{a}4)\\
C^{(5)}_4 & (1\hphantom{a}5\hphantom{a}3\hphantom{a}6\hphantom{a}2)
\end{array} \]
If we check here one by one whether these dominant pairs are actually well-covering, we find that eight of them indeed are: those given by $C_5^{(2)}$, $C_6^{(2)}$, $C_7^{(2)}$, $C_8^{(2)}$, $C_7^{(3)}$, $C_8^{(3)}$, $C_4^{(4)}$, and $C_5^{(4)}$. Let us give two examples, for $C_5^{(2)}$ and $C_6^{(2)}$: the equations defining the subspaces of $\big\{(\alpha,\beta,\gamma)\text{ s.t. }|\alpha|=|\beta|=|\gamma|, \; \ell(\alpha)\leq 3, \; \ell(\beta)\leq 2, \; \ell(\gamma)\leq 6\big\}$ spanned respectively by $\mathcal{F}(C^{(2)}_5)$ and $\mathcal{F}(C^{(2)}_6)$ are respectively
\[ \left\lbrace\begin{array}{l}
\alpha_1=\gamma_1+\gamma_5\\
\alpha_2=\gamma_2+\gamma_6\\
\beta_1=\gamma_1+\gamma_2+\gamma_3
\end{array}\right.\qquad\text{and}\qquad\left\lbrace\begin{array}{l}
\alpha_1=\gamma_1+\gamma_6\\
\alpha_2=\gamma_2+\gamma_4\\
\beta_1=\gamma_1+\gamma_2+\gamma_3
\end{array}\right.. \]
We can for instance notice that $\big((4,3,2),(8,1),(4,3,1,1)\big)$ belongs to $\mathcal{F}(C^{(2)}_5)$, whereas $\big((4,3,1),(7,1),(4,2,1,1)\big)$ is in $\mathcal{F}(C^{(2)}_6)$. Then, since these faces contain each some triple $(\alpha,\beta,\gamma)$ of partitions which is such that either $\alpha$ and $\beta$ are regular (meaning that the parts of $\alpha$ are pairwise distinct, as are those of $\beta$), or $\gamma$ is regular (likewise), \cite[Theorem 12]{ressayre} assures that these two dominant pairs are in fact well-covering. Moreover, Lemma \ref{lemma_uniqueness} proves that the two (thus regular) faces are distinct. This is interesting because they come from the same Configuration \circled{E}, appearing in the first column of the order matrix number 2. Hence this kind of configuration can indeed give two different regular faces of $\mathrm{PKron}_{n_1,n_2}$ (cf Remark \ref{rmk_2_config}).

\vspace{5mm}

Lemma \ref{lemma_uniqueness} furthermore assures that all the regular faces corresponding to the 28 well-covering pairs that we presented are pairwise distinct. Looking in more details at the 12 other dominant pairs, which are not well-covering, we can see that they in fact give only two distinct non-regular faces of $\mathrm{PKron}_{3,2}$ containing almost stable triples: the equations of the subspaces that they respectively span are
\[ \left\lbrace\begin{array}{l}
\gamma_1=\gamma_2\\
\gamma_3=\gamma_4\\
\gamma_5=\gamma_6\\
\alpha_1=\alpha_2=\gamma_1+\gamma_3\\
\beta_1=\gamma_1+\gamma_3+\gamma_5
\end{array}\right.\qquad\text{and}\qquad\left\lbrace\begin{array}{l}
\gamma_1=\gamma_2\\
\gamma_3=\gamma_4\\
\gamma_5=\gamma_6\\
\alpha_1=2\gamma_1\\
\alpha_2=\gamma_3+\gamma_5\\
\beta_1=\gamma_1+\gamma_3+\gamma_5
\end{array}\right.. \]
In total we then obtained 23 new regular faces of $\mathrm{PKron}_{3,2}$ which contain only stable triples, whereas 5 others were already known. We also got 2 other non-regular faces, containing only almost stable triples.

\begin{rmk}
While doing the previous computations, we noticed that Configurations \circled{A} and \circled{E} ultimately gave a well-covering pair each time, whereas Configurations \circled{C} and \circled{D} yielded only dominant pairs which were not well-covering (our examples were too small to observe a Configuration \circled{B}). One could wonder whether this is always the case.
\end{rmk}

\subsection{All order matrices of size 3x3}

For this case the numbers begin to become much larger: there are 36 possible order matrices (i.e. 36 types of additive matrices) of size 3$\times$3. As a consequence we do not write all of them here\footnote{For the interested reader, they can be found online at \url{http://math.univ-lyon1.fr/~pelletier/recherche/matrices_additives_3x3.pdf}, along with the number of well-covering and dominant pairs that each provides.}. First Manivel and Vallejo's theorem yields 36 additive faces of $\mathrm{PKron}_{3,3}$ with this 36 additive matrices. But then if we look in details at these matrices, we find that Theorem \ref{resultat cardinal 1} gives 144 well-covering pairs, i.e. 144 regular faces containing only stable triples. Moreover, Theorem \ref{resultat cardinal 2} adds 232 dominant pairs to this, i.e. 232 faces of $\mathrm{PKron}_{3,3}$ -- possibly non-regular and not necessarily pairwise distinct -- containing only almost stable triples. Considering what happened in the two previous cases, we can hope that some of those dominant pairs are in fact well-covering. It would be of course possible to check whether this is true, but it is far too tedious to do it here.

\vspace{5mm}

Let us nevertheless give one detailed example of a new face of $\mathrm{PKron}_{3,3}$ that we can obtain with our results: look at the order matrix
\[ \begin{pmatrix}
\text{\ding{192}}&\text{\ding{193}}&\text{\ding{198}}\\
\text{\ding{194}}&\text{\ding{196}}&\text{\ding{199}}\\
\text{\ding{195}}&\text{\ding{197}}&\text{\ding{200}}
\end{pmatrix} \]
coming for instance from the dominant, regular, $\hat{G}$-regular one-parameter subgroup $\tau=(4,1,0|7,5,0)$ of $T$. Theorem \ref{resultat cardinal 2} tells us that the pair with
\[ C=\Big\{\big(B/B,(1\hphantom{a}9\hphantom{a}4\hphantom{a}2\hphantom{a}6)(5\hphantom{a}8)\hat{B}/\hat{B}\Big)\Big\} \]
is dominant (it comes from a Configuration \circled{E} in the third column of the matrix). And we can compute the equations of the subspace spanned by $\mathcal{F}(C)$ in $\big\{(\alpha,\beta,\gamma)\text{ s.t. }|\alpha|=|\beta|=|\gamma|, \; \ell(\alpha)\leq 3, \; \ell(\beta)\leq 3, \; \ell(\gamma)\leq 9\big\}$:
\[ \left\lbrace\begin{array}{l}
\alpha_1=\gamma_4+\gamma_6+\gamma_7\\
\alpha_2=\gamma_1+\gamma_2+\gamma_8\\
\beta_1=\gamma_1+\gamma_3+\gamma_4\\
\beta_2=\gamma_2+\gamma_5+\gamma_6
\end{array}\right.. \]
Then one can notice that $\big((6,5,4),(7,6,2),(3,2^6)\big)\in\mathcal{F}(C)$. As a consequence \cite[Theorem 12]{ressayre} assures that the pair $(C,\tau)$ is well-covering, and $\mathcal{F}(C)$ is indeed a regular face of $\mathrm{PKron}_{3,3}$ containing only stable triples.

\bibliographystyle{alpha}
\bibliography{Biblio_these}
\end{document}